\date{\today}
\theoremstyle{definition}
\newtheorem{defn}{Definition}[section]
\newtheorem{theorem}{Theorem}[section]
\newtheorem{lemma}[theorem]{Lemma}
\newtheorem{prop}{Proposition}
\numberwithin{equation}{section}
\title{Region of existence of multiple solutions for a class of 4-point BVPs}
\author{Amit K. Verma$^a$\footnote{akverma@iitp.ac.in}, Nazia Urus$^b$\\\small{\it{$^{a,b}$Department of Mathematics, IIT Patna, Bihta, Patna 801103, (BR) India.}}}
\begin{document}
\maketitle
\begin{abstract}
The aim of this article is to prove the existence of solution and compute the region of existence for a class of 4-point BVPs defined as,
\begin{eqnarray*}
&&-u''(x)=\psi(x,u,u'), \quad 0<x<1,\\
&&u'(0)=\lambda_{1}u(\xi), \quad u'(1)=\lambda_{2} u(\eta),
\end{eqnarray*}
where $I=[0,1]$, $0<\xi\leq\eta<1$ and $\lambda_1,\lambda_2> 0$. The non linear source term $\psi\in C(I\times\mathbb{R}\times\mathbb{R},\mathbb{R})$ is one sided Lipschitz in $u$ with Lipschitz constant $L_1$ and Lipschitz in $u'$ with Lipschitz function $L_2(x)$, where $L_2(x):I\rightarrow \mathbb{R}^+ $ such that $ L_2(0)=0$ and $L_2'(x)\geq 0$. The novelty in this paper allows us to use simplest form of computational iteration and existence is achieved with a restriction that $L_2$ depends on $x$. We develop monotone iterative technique in well ordered and reverse ordered cases. We prove maximum anti-maximum principle under certain assumptions and use it to show the monotonic behavior of sequences of upper and lower solutions. The conditions derived in this paper are sufficient and have been verified for two examples. The method involves Newton's quasilinearization which involves a parameter $k$ which is equivalent to $\dfrac{\partial \psi}{\partial u}$. Our aim is to find a range of $k$ so that the iterative technique is convergent.
\end{abstract}
{\textit{Keywords: }} Monotone iterative technique; upper-lower solution; four-point BVPs; nonlinear; Green's function; maximum anti-maximum principle.\\
{\textit{AMS Subject Classification: }} 34B10; 34B15 
\section{Introduction}\label{section}
In the field of differential equations the concept of non-linear boundary value problems (NLBVPs) have great importance. In recent years, second and higher order NLBVPs have been studied in several areas to describe many physical, biological and chemical phenomena. To study the existence, multiplicity and positivity of solutions of non linear $m-$point BVPs various methods have been introduced.

These methods have also been used to obtain solutions of various kind of BVP such as Neuman, Dirichlet, mixed type BVP as well as Singular and Non singular BVP.\\

In 2010, to show the existence and multiplicity of positive solutions, Yang \cite{yang2010existence} used Krasnoselskii fixed point theorem and triple fixed point theorem for the following problem
\begin{eqnarray*}
&&u''(x)+\psi(x,u, u')=0, \quad 0<x<1,\\
&&u(0)=\alpha u(\eta) , \quad u(1)=\beta u(\xi).
\end{eqnarray*}
 Chen \cite{chen2006positive} provided existence of positive solutions for the following fourth order four-point nonlinear differential equation.
 \begin{eqnarray*}
&&u^4(x)+\psi(x,u)=0, \quad 0<x<1,\\
&&u(0)=u(1)=0 , \\
&& a u''(\xi_1)-b u'''(\xi_1)=0, \quad a u''(\xi_2)+b u'''(\xi_2)=0,
\end{eqnarray*}
where $0\leq \xi_1 \leq \xi_2\leq 1$ and $a, b,c,d\geq 0$ are constants. This equation is known as linear beam equation for $\psi(x,u)=a(x)g(u(x))$. He used  method of upper-lower solution and fixed point theorem to solve the above BVPs.

Zhai \cite{zhai2014properties} established the existence and uniqueness of positive solutions and used fixed point theorems of concave operators in partial ordering Banach spaces for a class of four-point BVPs of Caputo fractional differential equations for any given parameter. More works on fixed point theorem can be found in \cite{anderson1998multiple, bai2007positive,  bai2005positive, liu2005positive, webb2012existence}. Sun \cite{sun2011positive} studied a class of second order four point boundary value problems with $p$-Laplacian.

Chinni \cite{chinni2019four} studied existence, localization and multiplicity of positive solutions for four-point BVPs with singular $\phi$-Laplacian
\begin{eqnarray*}
&&-[\phi(u')]'=\psi(x,u, u'), \quad u(0)=\alpha u(\xi) , \quad u(T)=\beta u(\eta),
\end{eqnarray*}
where $\alpha,\beta \in[0,1),\quad 0<\xi<\eta<T $, $\psi:[0,T]\times \mathbb{R}^2\rightarrow \mathbb{R}  $ is continuous and $ \phi:(-a,a)\rightarrow \mathbb{R} ~(0<a<\infty) $ is an increasing homeomorphism and it is always solvable.

Palamides et al. \cite{palamides2012fourth} used continuum property of the solutions funnel (Kneser's Theorem) which is combined with the corresponding vector field for  fourth-order four-point BVPs. They investigated the existence of positive or a negative solution, although these problems do not always admit positive Green's function.

Some more works are done by using various methods such as Leray Schauder degree theory \cite{bai2007positive, ge2010existence, ma2004existence}, Shooting method \cite{taliaferro1979nonlinear}, Coincidence degree theory \cite{tang2016existence}, Topological degree method \cite {anderson1998multiple,bai2005positive,khan2006existence,liu2005positive,ma2004existence}, Upper-lower solution method \cite{khan2006existence,khan2007existence} and the list is not exhaustive. Bai at el. \cite{bai2007positive} studied second-order four-point BVPs
\begin{eqnarray*}
&&u''(x)+\lambda h(x)\psi(x,u)=0, \quad 0<x<1,\\
&&u'(0)=a u(\xi) , \quad u(1)=b u(\eta),
\end{eqnarray*}
where $0<\xi<\eta<1, ~ 0\leq a,b<1,$ and $h:[0,1]\rightarrow [0,\infty),~ \psi:[0,1]\times [0,\infty)\rightarrow [0,\infty) $
are continuous functions. The methods which are used to ensure the existence, non existence and multiplicity of positive solutions in a given range are fixed-point index theory, Leray-Schauder degree and the upper and lower solution. Shen at el. \cite{shen2011positive} established the existence of positive solution for a class of second-order four point problem
\begin{eqnarray*}
&&-u''(x)=\psi(x,u), \quad 0<x<1,\\
&&u(0)=\alpha u(\eta) , \quad u(1)=\beta u(\xi),
\end{eqnarray*}
where $\xi, \eta \in (0,1) $ under some resonance conditions for $\alpha$ and $\beta$ and they have used Leggett-Williams norm-type theorem.

The afore mentioned methods come with its own set of  advantages and disadvantages. The monotone iterative technique (MI-Technique) is an inspiring method \cite{cherpion2001constructive, zhang1995positive}  which gives ground for the theoretical and numerical existence and uniqueness of solution of non linear IVPs and $m$-point BVPs. Higher order $m$-point ODE and PDE are also studied by this technique.

MI-technique which is related to method of upper-lower solutions was first introduced by Picard \cite{picard1890memoire} in 1890. For Dirichlet BVPs he studied the existence of solution. In this method, for a class of linear problems, the monotone sequences of lower-upper solution  with initial guesses are constructed. Then by using the initial approximations, convergence of these monotone sequences are shown. Then it is shown that the solution of nonlinear BVPs lies between the convergent sequences of lower-upper solutions. For a comprehensive and detailed study, we suggest to read books \cite{VL1985,CDCPH2006}.

There are some special types of problems in which iterative technique is well grounded such as impulsive integro differential equations. For this problem Zhimin \cite{he2004monotone} investigated the existence and uniqueness of solution where boundary conditions are periodic. Other works which may be referred are such as Riemann-Liouville fractional differential equations \cite{wang2012monotone, cui2017monotone, ramirez2009monotone}, impulsive differential equations in Banach space \cite{chen2016double}, casual differential equation \cite{wang2016monotone},  Stieltjes integral boundary conditions \cite{sun2018monotone} etc.

Wei et al. \cite{wei2017monotone} studied the existence and uniqueness of slanted cantilever beam,
\begin{eqnarray*}
&&u^{4}(x)=\psi(x,u,u'), \quad 0\leq x\leq 1,\\
 &&u(0)=u'(0)=u''(1)=u'''(1)=0,
\end{eqnarray*}
where $\psi\in (I\times\mathbb{R}\times\mathbb{R},\mathbb{R})$ is continuous and $u'(x)$ is slope, reflecting the curving degree of the elastic beam. Above problem illustrates the static deformation of an elastic beam which has its right extreme freed and left extreme fixed. Here  Verma et al. \cite{verma2014existence} studied the existence of solution for three point BVPs which  arises in bridge design, and for this they considered the following problems
\begin{eqnarray*}
&&-u''(x)=\psi(x,u,u'), \quad 0<x<1,\\
&&u'(0)=0, \quad u'(1)=\delta u'(\eta),
\end{eqnarray*}
where $0<\eta<1$, $\delta> 0$ and $\psi\in (I\times\mathbb{R}^2,\mathbb{R})$ is continuous.

There are a lot of works in two, three and multi-point BVPs using this technique. In 1931 Dragoni \cite{dragoni1931ii} introduced MI-technique for two point Dirichlet BVPs when nonlinear term is derivative dependent. Cabada  et al. \cite{cabada2001monotone}, Cherpion et al. \cite{cherpion2001constructive}, also developed this technique for two point second order BVPs and studied the existence and approximation. Zhang \cite{zhang1995positive} proved necessary and sufficient conditions for existence of positive solution for the following  Dirichlet singular problem
\begin{eqnarray*}
&&-u''(x)=\psi(x,u), \quad 0<x<1,\\
&&u(0)=u(1)=0.
\end{eqnarray*}
As the exact solution for fractional differential equation can not be obtained easily, so we look for approximate solutions. For the approximation of solution various methods can be used but MI-Technique is an effective mechanism for both IVPs and BVPs related to fractional type differential equations. Cui \cite{cui2017monotone} used this technique to approximate maximal and minimal solutions and derived uniqueness result for non-linear Riemann-Liouville fractional differential equation.
\begin{eqnarray*}
&&D^{p}u(x)+\psi(x,u)=0, \quad 0<x<1,\\
&&u(0)=u'(1)=0, \quad u(1)=0,
\end{eqnarray*}
where $D^{p}$ is the standard Riemann-Liouville derivative and $p\in (2,3]$.

To deal with singular-nonlinear BVPs there are various methods such as Shooting method, Topological degree method and the method of lower-upper solutions but the method of upper-lower solution is very promising method \cite{zhang1995positive}. In \cite{pandey2008existence, Pandey2008existence1, pandey2009note, Pandey2010, PANDEY2010AMC} the uniqueness and existence of solutions for a class of singular and doubly singular two point BVPs have been established. Also the region of multiple solutions have been determined.

Verma et al. \cite{verma2015note}, Singh et al. \cite{singh2013picard, singh2013monotone, msakv2017} and  Li et al. \cite{li2008existence} developed monotone iterative technique for three point BVPs and studied the existence. In \cite{singh2013monotone}, Singh et al. considered three-point BVPs of the following type
\begin{eqnarray*}
&&-u''(x)=\psi(x,u,u'), \quad 0<x<1,\\
&&u'(0)=0, \quad u'(1)=\lambda_{2} u(\eta).
\end{eqnarray*}
Then they developed MI-Technique and derived some existence results.

MI-Technique has also been done for four point BVPs. Ge et al. \cite{ge2016multiplicity} studied multiplicity of solution for four point BVPs via the variational approach and  MI-Technique. Zhang et al. \cite{zhang2006upper} developed the upper and lower solution method and the monotone iterative technique and obtained some new existence results for the following fourth order four point BVPs
\begin{eqnarray*}
&&-u^{4}(x)=\psi(x,u,u''), \quad 0<x<1,\\
&&u(0)=u(1)=0, \\
&& a u''(\xi_1)-b u'''(\xi_1)=0, \quad c u''(\xi_2)+d u'''(\xi_2)=0.
\end{eqnarray*}
Recently, Verma et al. \cite{verma2019monotone} proved existence of solution for the following class of four point BVPs
\begin{eqnarray*}
&&-u''(x)=\psi(x,u,u'), \quad 0<x<1,\\
&&u'(0)=0, \quad u(1)=\delta_{1} u(\eta_{1})+\delta_{2}u(\eta_{2}),
\end{eqnarray*}
where $\psi$ is continuous, $I=[0,1]$, $\eta_1<\eta_2 \in (0,1) $ and $\delta_1,\delta_2\geq 0$. They developed method of upper-lower solutions in both reversed and well ordered case. Urus et al. \cite{NUAKVMS1} explored this technique for the above BVPs when $\psi$ is independent of $u'$.

In this paper we investigate existence of solution for the following BVPs,
\begin{eqnarray}
\label{ppr2sec1eq1}&&-u''(x)=\psi(x,u,u'), \quad 0<x<1,\\
\label{ppr2sec1eq2}&&u'(0)=\lambda_{1}u(\xi), \quad u'(1)=\lambda_{2} u(\eta),
\end{eqnarray}
where $I=[0,1]$, $0<\xi\leq\eta<1$, $\lambda_{1},\lambda_{1}\geq 0$  and $\psi(x,u,u')$ is continuous. We develop maximum and anti-maximum principle, MI-Technique with upper-lower solution. We prove existence and approximation of solution for both reversed and well ordered case and compute the region of existence. We obtain that the non-linear source term $\psi(x,u,u')$ is Lipschitz in $u'$ and one sided Lipschitz in $u$. To this end we verify all the conditions and examples by using Mathematica 11.3. This paper is divided in four sections. In the second section we explore corresponding linear BVPs. In the third section we study the existence of non-linear problem for the case $0<k<\pi^2/4$ and by numerical illustration we prove our results. Similarly in fourth section we study for the case where $k<0$.
\section{ Quasilinearization}\label{section1}
In this section we define an iterative scheme for the BVP \eqref{ppr2sec1eq1} and \eqref{ppr2sec1eq2} which is given as follows,
\begin{eqnarray*}
&& -u''_{n+1}(x)-k u_{n+1}(x)=\psi(x,u_{n}(x),u'_{n}(x))-k u_{n}(x), \\
&&u'_{n+1}(0)=\lambda_{1}u_{n+1}(\xi), \quad u'_{n+1}(1)=\lambda_{2} u_{n+1}(\eta),
\end{eqnarray*}
where $\psi\in C(I\times\mathbb{R}\times\mathbb{R},\mathbb{R})$, $0<\xi\leq\eta<1$, $\lambda_1,\lambda_2\geq 0$, $n\in \mathbb{N}$ and $k\in \mathbb{R}-\{0\}$ is constant.
The linear BVPs corresponding to above iterative scheme is:
\begin{eqnarray}
\label{ppr2sec2eq1} &&-u''(x)-k u(x)=g(x) ,\quad  0<x<1,\\
\label{ppr2sec2eq2} &&u'(0)=\lambda_{1}u(\xi), \quad u'(1)=\lambda_{2} u(\eta)+c,
\end{eqnarray}
where $g=\psi(x,u,u')-k u$ is continuous in $[0,1]$ and constant $c \geq 0$.\\
\section{For $k$ Positive, i.e., $0<k<\pi^{2}/4$.}
\label{ppr2sec3}
This section is divided into five sub sections. In first subsection we derive Green's function, sign of Green's function, solution of BVPs \eqref{ppr2sec2eq1}-\eqref{ppr2sec2eq2} and anti-maximum principle. In second subsection we prove existence of some differential inequality which is used to determine monotonicity of sequences of upper-lower solutions,  in subsection three MI-Technique has been developed in reverse ordered case, also some lemmas and proposition have been developed which we use to prove the existence. In fourth subsection we obtain bound for the derivative of solution. Then we establish our main theorem which proves the existence of solutions between upper and lower solutions. In the last subsection, we give examples and show that all the sufficient conditions are true for a given range of $k$ and monotonic sequences exists and converge to the solution of the nonlinear problem.
\subsection{Deduction of Green's Function}\label{ppr2sec3subsec1}
We consider linear  BVPs \eqref{ppr2sec2eq1}-\eqref{ppr2sec2eq2}
\begin{eqnarray}
\label{ppr2sec3eq1}
&&-u''(x)-k u(x)=g(x) ,\quad  x\in (0,1),\\
\label{ppr2sec3eq2}
&&u'(0)=\lambda_{1}u(\xi), \quad u'(1)=\lambda_{2} u(\eta),
\end{eqnarray}
where $g(x)$ is continuous in $[0,1]$.
\begin{lemma}
\label{ppr2sec3lemma1} If Green's function of the BVPs \eqref{ppr2sec3eq1}-\eqref{ppr2sec3eq2} is $G(x,s)$. Then $G(x,s)$ is given by
$$G(x,s)=\frac{1}{\sqrt{k}D_{k}}
\begin{cases}
\sqrt{k}\cos\sqrt{k}x(\sqrt{k}\cos\sqrt{k}(s-1)+\lambda_{2}\sin\sqrt{k}(s-\eta))\\+\lambda_{1}\sin\sqrt{k}(s-x)(\lambda_{2}\sin\sqrt{k}(\eta-\xi)-\sqrt{k}\cos\sqrt{k}(\xi-1)), & 0\leq x\leq s\leq\xi; \\
\sqrt{k}\{\cos\sqrt{k}s(\sqrt{k}\cos\sqrt{k}(x-1)+\lambda_{2}\sin\sqrt{k}(x-\eta))\}, & 0\leq x,s\leq\xi; \\
(\sqrt{k}\cos\sqrt{k}x+\lambda_1\sin\sqrt{k}(x-\xi))(\sqrt{k}\cos\sqrt{k}(s-1)+\lambda_2 \sin\sqrt{k}(s-\eta)), & \xi\leq x\leq s\leq\eta; \\
(\sqrt{k}\cos\sqrt{k}s+\lambda_1\sin\sqrt{k}(s-\xi))(\sqrt{k}\cos\sqrt{k}(x-1)+\lambda_2\sin\sqrt{k}(x-\eta)), & s\leq x, s\leq\eta; \\
\sqrt{k}\cos\sqrt{k}(s-1)(\sqrt{k}\cos\sqrt{k}x+\lambda_1\sin\sqrt{k}(x-\xi)), & \eta\leq x\leq s\leq 1; \\
\sqrt{k}\cos\sqrt{k}(x-1)(\sqrt{k}\cos\sqrt{k}s+\lambda_1\sin\sqrt{k}(s-\xi))\\+\lambda_2\sin\sqrt{k}(x-s)(\sqrt{k}\cos\sqrt{k}\eta+\lambda_1\sin\sqrt{k}(\eta-\xi)), & s\leq x, s\leq 1,
\end{cases}
$$
where $ D_k=k\sin\sqrt{k}+\lambda_{2}\sqrt{k}\cos\sqrt{k}\eta+\lambda_{1}\big(\lambda_{2}\sin\sqrt{k}(\eta-\xi)-\sqrt{k}\cos\sqrt{k}(\xi-1)\big).$
\end{lemma}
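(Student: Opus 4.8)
The plan is to solve the linear problem \eqref{ppr2sec3eq1}--\eqref{ppr2sec3eq2} explicitly by variation of parameters and then read $G(x,s)$ off from the representation $u(x)=\int_0^1 G(x,s)g(s)\,ds$. Since $-u''-ku=0$ has fundamental system $\cos\sqrt{k}x,\ \sin\sqrt{k}x$, the general solution is $u(x)=c_1\cos\sqrt{k}x+c_2\sin\sqrt{k}x-\tfrac{1}{\sqrt{k}}\int_0^x\sin\sqrt{k}(x-t)\,g(t)\,dt$, the last term being a particular solution (one verifies $-u_p''-ku_p=g$ by differentiating under the integral). First I would record $u'(x)$, and then $u'(0)$, $u'(1)$, $u(\xi)$, $u(\eta)$ from this expression, noting that $u(\xi)$ and $u(\eta)$ carry integral remainders $\int_0^\xi$ and $\int_0^\eta$ because the boundary data are nonlocal.

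Next I would substitute these into $u'(0)=\lambda_1 u(\xi)$ and $u'(1)=\lambda_2 u(\eta)$, obtaining a $2\times2$ linear system for $(c_1,c_2)$ whose right-hand side is a linear functional of $g$. The coefficient determinant of this system is, up to the overall factor $\sqrt{k}$, exactly $D_k=k\sin\sqrt{k}+\lambda_2\sqrt{k}\cos\sqrt{k}\eta+\lambda_1\big(\lambda_2\sin\sqrt{k}(\eta-\xi)-\sqrt{k}\cos\sqrt{k}(\xi-1)\big)$; I would check that $D_k\neq0$ on the range under consideration so that Cramer's rule applies and the solution is unique. Solving for $c_1,c_2$ and inserting them back into $u(x)$ writes $u$ as a sum of integrals with upper limits $\xi,\eta,1$ and $x$.

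The heart of the computation is to merge these integrals into a single $\int_0^1(\cdot)\,g(s)\,ds$. Because the limits are $x,\xi,\eta$, the coefficient of $g(s)$ changes form as $s$ crosses each of these points, and collecting terms on the six subregions determined by the orderings of $s$ relative to $x,\xi,\eta$ produces the six branches. To organize and cross-check the algebra I would introduce the homogeneous solutions $u_L(x)=\sqrt{k}\cos\sqrt{k}x+\lambda_1\sin\sqrt{k}(x-\xi)$ and $u_R(x)=\sqrt{k}\cos\sqrt{k}(x-1)+\lambda_2\sin\sqrt{k}(x-\eta)$, which satisfy the left and right conditions respectively ($u_L'(0)=\lambda_1u_L(\xi)$ and $u_R'(1)=\lambda_2 u_R(\eta)$). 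On the middle block $\xi\le s\le\eta$ both nonlocal points lie on the consistent side of the corner at $x=s$, the conditions decouple, and $G$ reduces to the clean product $u_L(x_<)\,u_R(x_>)/(\sqrt{k}D_k)$, matching branches three and four. On the outer block $s\le\xi$ the point $\eta$ still lies to the right of the corner, so the piece on $x\ge s$ is an honest multiple of $u_R$ (branch two), while the left condition's point $\xi$ sits across the corner, tying the piece on $x\le s$ to that far value and generating the extra $\lambda_1\sin\sqrt{k}(s-x)\big(\lambda_2\sin\sqrt{k}(\eta-\xi)-\sqrt{k}\cos\sqrt{k}(\xi-1)\big)$ term (branch one); symmetrically the block $s\ge\eta$ gives a clean $u_L$ multiple for $x\le s$ (branch five) and the $\lambda_2\sin\sqrt{k}(x-s)\big(\sqrt{k}\cos\sqrt{k}\eta+\lambda_1\sin\sqrt{k}(\eta-\xi)\big)$ correction for $x\ge s$ (branch six).

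I expect the main obstacle to be the bookkeeping and the trigonometric simplification rather than any conceptual difficulty: each branch must be reduced, via the angle-addition identities for $\sin$ and $\cos$, to the compact closed form listed, and the case boundaries must be reconciled so that $G$ is continuous across $x=s$ with the unit jump $G_x(s^+,s)-G_x(s^-,s)=-1$ forced by $-G_{xx}-kG=\delta(x-s)$. As an independent check I would verify directly that the proposed $G(\cdot,s)$ solves the homogeneous equation on each side of $s$, satisfies both boundary conditions, and has the correct jump; since these properties characterize the Green's function uniquely once $D_k\neq0$, confirming them completes the proof without re-deriving the constants.
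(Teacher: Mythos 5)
Your plan is, in substance, the standard constructive derivation, and it is essentially the same route the paper takes: the paper's own ``proof'' of this lemma is a one-line deferral to the derivation in \cite{verma2019monotone} (complementary function plus particular integral, boundary conditions yielding a $2\times 2$ system, and collection of the resulting integrals over the subregions cut out by $x$, $\xi$, $\eta$), which is exactly what you describe. Your structural observations check out: with $u_L(x)=\sqrt{k}\cos\sqrt{k}x+\lambda_1\sin\sqrt{k}(x-\xi)$ and $u_R(x)=\sqrt{k}\cos\sqrt{k}(x-1)+\lambda_2\sin\sqrt{k}(x-\eta)$ one computes $W(u_L,u_R)=\sqrt{k}\,D_k$, the two middle branches are precisely $u_L(x_<)u_R(x_>)/(\sqrt{k}D_k)$, and the outer branches acquire the extra $\lambda_1$- and $\lambda_2$-terms for the reason you give (the nonlocal point sits across the corner $x=s$). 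One small correction of bookkeeping: with the natural arrangement of the two boundary equations the coefficient determinant is exactly $D_k$, not $\sqrt{k}D_k$; the $\sqrt{k}$ in the prefactor $1/(\sqrt{k}D_k)$ enters through the Wronskian, i.e., through the factor $1/\sqrt{k}$ in your particular integral. Also, $D_k>0$ is part of hypothesis $[A_1]$ rather than something to be checked independently.

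There is, however, one concrete point that would trip you up if the plan were executed literally: the sign convention. You posit $u(x)=\int_0^1 G(x,s)g(s)\,ds$ and accordingly demand $-G_{xx}-kG=\delta(x-s)$ with jump $G_x(s^+,s)-G_x(s^-,s)=-1$. But the paper's representation (Lemma \ref{ppr2sec3lemma3}) is $u(x)=\frac{-c}{D_k}\big(\sqrt{k}\cos\sqrt{k}x+\lambda_1\sin\sqrt{k}(x-\xi)\big)-\int_0^1 G(x,s)g(s)\,ds$, with a minus sign in front of the integral; correspondingly the displayed kernel satisfies $G_{xx}+kG=\delta(x-s)$ and has jump $+1$ (on the middle block the jump equals $W(u_L,u_R)/(\sqrt{k}D_k)=+1$), and it is nonnegative under $[A_1]$ (Lemma \ref{ppr2sec3lemma2}), which is exactly what makes Proposition \ref{ppr2sec3prop1} an \emph{anti-maximum} principle. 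So your derivation would produce the negative of the stated $G$, and your final characterization check, as written, would reject the paper's formula even though it is correct under the paper's convention. Flipping the sign in your representation (or in your jump condition) repairs everything, and with that single adjustment your argument goes through and in fact supplies the detail the paper omits.
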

\begin{proof}

Proof is similar to the proof described in \cite{verma2019monotone}.
\end{proof}
Let us assume that\\

$ [A_1]:$  $ D_k>0,~ \sqrt{k}\cos\sqrt{k}-\lambda_2 \sin\sqrt{k}\eta\geq 0$, $\sqrt{k}-\lambda_1 \sin\sqrt{k}\xi>0.$\\

In section \ref{P2sec4}, we have shown graphically that above inequalities satisfy in $k \in (\alpha,\beta)\subseteq (0,\pi^2/4)$.
\begin{lemma}
\label{ppr2sec3lemma2} Suppose $[A_1]$ is satisfied, then $G(x,s)\geq0.$
\end{lemma}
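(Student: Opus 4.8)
The plan is to show that each of the six branches of the Green's function from Lemma~\ref{ppr2sec3lemma1} is nonnegative on its region, exploiting the fact that the common denominator $\sqrt{k}\,D_k$ is positive (since $D_k>0$ by $[A_1]$ and $\sqrt{k}>0$). Thus it suffices to prove that every numerator is nonnegative. My first move is to establish a few elementary sign facts that will be reused across all branches. On $(0,\pi^2/4)$ we have $0<\sqrt{k}<\pi/2$, so for $x\in I$ the argument $\sqrt{k}x\in(0,\pi/2)$ and hence $\cos\sqrt{k}x>0$ and $\sin\sqrt{k}x\geq 0$; likewise $\cos\sqrt{k}(x-1)>0$ since $\sqrt{k}(x-1)\in(-\pi/2,0)$. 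These positivity facts, together with $\lambda_1,\lambda_2\geq 0$, will handle the ``diagonal'' branches almost immediately.

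Next I would dispose of the three branches whose numerators are products of two factors, namely the branches valid on $\xi\leq x\leq s\leq\eta$, on $s\leq x,\,s\leq\eta$, and on $\eta\leq x\leq s\leq 1$. For these I would show each factor is individually nonnegative. The factor $\sqrt{k}\cos\sqrt{k}x+\lambda_1\sin\sqrt{k}(x-\xi)$ is nonnegative because for $x\geq\xi$ the second term is nonnegative and the first is positive; for $x\leq\xi$ one instead uses the hypothesis $\sqrt{k}-\lambda_1\sin\sqrt{k}\xi>0$ to control the sign. The factor $\sqrt{k}\cos\sqrt{k}(s-1)+\lambda_2\sin\sqrt{k}(s-\eta)$ is handled symmetrically, with the endpoint case $s=1$ governed by $\sqrt{k}\cos\sqrt{k}-\lambda_2\sin\sqrt{k}\eta\geq 0$ from $[A_1]$, after noting $\cos\sqrt{k}(1-1)=1$. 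Monotonicity in $s$ (the derivative in $s$ has controlled sign on the relevant interval) lets me reduce to checking these endpoint inequalities, which are exactly the assumptions in $[A_1]$.

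The main obstacle will be the two ``corner'' branches (first and last), whose numerators are sums of two terms of a priori mixed sign: for $0\leq x\leq s\leq\xi$ the numerator contains $\lambda_1\sin\sqrt{k}(s-x)\bigl(\lambda_2\sin\sqrt{k}(\eta-\xi)-\sqrt{k}\cos\sqrt{k}(\xi-1)\bigr)$, and the parenthesized factor is $-D_k$-like and can be negative, while the first term $\sqrt{k}\cos\sqrt{k}x(\cdots)$ is positive. Here I would not treat the terms separately; instead I would regard the numerator as a function of $s$ (respectively of $x$ in the symmetric last branch), verify it is monotone by differentiating, and then check nonnegativity at the endpoint of the region where it is smallest. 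On $0\leq x\leq s\leq\xi$ the natural endpoint is $s=x$, where the two branches must agree with the second branch $\sqrt{k}\cos\sqrt{k}x\,(\sqrt{k}\cos\sqrt{k}(x-1)+\lambda_2\sin\sqrt{k}(x-\eta))$ by continuity of $G$ along the diagonal; since that value is already known to be nonnegative, monotonicity upgrades it to the whole branch. The symmetric argument, reducing the last branch to its value at $s=x$ and invoking continuity with the fourth branch, closes the proof. The delicate point throughout is keeping track of signs of $\cos\sqrt{k}(\xi-1)$ and the combination $\lambda_2\sin\sqrt{k}(\eta-\xi)-\sqrt{k}\cos\sqrt{k}(\xi-1)$, and confirming that the monotonicity-plus-endpoint reduction genuinely reduces every corner case to an inequality guaranteed by $[A_1]$.
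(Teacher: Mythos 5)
Your treatment of the denominator, the elementary sign facts, and the three product-form branches is sound and essentially matches what the paper needs (one small slip: for the factor $Z_1(s)=\sqrt{k}\cos\sqrt{k}(s-1)+\lambda_2\sin\sqrt{k}(s-\eta)$, the derivative $-k\sin\sqrt{k}(s-1)+\lambda_2\sqrt{k}\cos\sqrt{k}(s-\eta)$ is nonnegative, so $Z_1$ is \emph{increasing} and the binding endpoint is $s=0$, where $Z_1(0)=\sqrt{k}\cos\sqrt{k}-\lambda_2\sin\sqrt{k}\eta\geq 0$ is exactly the $[A_1]$ hypothesis — not $s=1$ as you state). Also, your worry about the last branch is unnecessary: there $s\leq x$, so $\sin\sqrt{k}(x-s)\geq 0$, and the parenthesis $\sqrt{k}\cos\sqrt{k}\eta+\lambda_1\sin\sqrt{k}(\eta-\xi)$ is a sum of nonnegative terms since $\eta\geq\xi$; both summands of that numerator are nonnegative outright.

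The genuine gap is in your handling of the first branch $0\leq x\leq s\leq\xi$. Writing the numerator as $N(x,s)=\sqrt{k}\cos\sqrt{k}x\,Z_1(s)+\lambda_1\sin\sqrt{k}(s-x)\,C$ with $C=\lambda_2\sin\sqrt{k}(\eta-\xi)-\sqrt{k}\cos\sqrt{k}(\xi-1)\leq 0$ under $[A_1]$, your plan requires $N$ to be monotone in $s$ so that the minimum sits on the diagonal $s=x$. But $\partial N/\partial s=\sqrt{k}\cos\sqrt{k}x\,Z_1'(s)+\lambda_1\sqrt{k}\cos\sqrt{k}(s-x)\,C$ is the sum of a nonnegative and a nonpositive term, and neither dominates in general: taking $\xi$ small allows $\lambda_1$ large while keeping $\sqrt{k}-\lambda_1\sin\sqrt{k}\xi>0$, and with $\lambda_2$ small one gets $C\approx-\sqrt{k}\cos\sqrt{k}(1-\xi)$, making $\partial N/\partial s<0$ at $s=x$; so monotonicity fails within the admissible parameter range, and "verify it is monotone by differentiating" is not a step you can complete. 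The paper avoids monotonicity entirely: it expands $\sin\sqrt{k}(s-x)$ by the addition formula to write $N=a_1\cos\sqrt{k}x+b_1\sin\sqrt{k}x$ with $x$-independent coefficients, then shows $a_1\geq 0$ (using $\cos\sqrt{k}\xi\leq\cos\sqrt{k}s$, $\sin\sqrt{k}s\leq\sin\sqrt{k}\xi$ and $[A_1]$) and $b_1=\lambda_1\cos\sqrt{k}s\,(-C)\geq 0$, giving $N\geq 0$ pointwise. If you want to salvage your endpoint philosophy, note instead that for fixed $s$ the numerator satisfies $N_{xx}+kN=0$ on an interval of length less than the half-period $\pi/\sqrt{k}$ (since $0<k<\pi^2/4$), so nonnegativity at the two boundary points $x=0$ and $x=s$ forces nonnegativity in between by a Sturm-type argument; that repairs the corner branch, but it is a different mechanism from the monotonicity you proposed, and the two endpoint values must then be checked separately (the value at $x=0$ is precisely the paper's $a_1$ up to the factor $1/\sqrt{k}D_k$, so that computation cannot be skipped).
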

\begin{proof}
Since $[A_1]$ holds $\forall~ \xi,\eta\in[0,1]$. Let us prove for the interval $0\leq x\leq s\leq \xi$. We have from Green's function,
\begin{eqnarray*}
a_{1}=&&\frac{1}{\sqrt{k}D_k}\{{\lambda_{2}\sqrt{k}\sin\sqrt{k}(s-\eta)+k\cos\sqrt{k}(s-1)+\lambda_{1}\sin\sqrt{k}s\{\lambda_{2}\sin\sqrt{k}(\eta-\xi)-\sqrt{k}\cos\sqrt{k}(\xi-1)\}\}},\\
\Rightarrow a_{1}=&&\frac{1}{\sqrt{k}D_k}[(\sqrt{k}\sin\sqrt{k}+\lambda_{2}\cos\sqrt{k}\eta)(\sqrt{k}-\lambda_{1}\sin\sqrt{k}\xi)\sin\sqrt{k}s+(\sqrt{k}\cos\sqrt{k}-\lambda_{2}\sin\sqrt{k}\eta)\\
&&(\sqrt{k}\cos\sqrt{k}s-\lambda_{1}\sin\sqrt{k}s\cos\sqrt{k}\xi)].
\end{eqnarray*}
As, $\cos\sqrt{k}\xi\leq\cos\sqrt{k}s$ and $\sin\sqrt{k}s\leq\sin\sqrt{k}\xi$. Now we have,
\begin{eqnarray*}
\cos\sqrt{k}\xi(\sqrt{k}-\lambda_{1}\sin\sqrt{k}\xi)\leq\cos\sqrt{k}\xi(\sqrt{k}-\lambda_{1}\sin\sqrt{k}s)\leq\sqrt{k}\cos\sqrt{k}s-\lambda_{1}\sin\sqrt{k}s\cos\sqrt{k}\xi.
\end{eqnarray*}
Hence,
\begin{eqnarray*}
 a_{1}\geq&&\frac{(\sqrt{k}-\lambda_{1}\sin\sqrt{k}\xi)}{\sqrt{k}D_k}[(\sqrt{k}\sin\sqrt{k}+\lambda_{2}\cos\sqrt{k}\eta)\sin\sqrt{k}s+(\sqrt{k}\cos\sqrt{k}-\lambda_{2}\sin\sqrt{k}\eta)\cos\sqrt{k}\xi].
\end{eqnarray*}
%%%%%%%%%%%%%%%%%%%%%%%%%%%%%%%%%%%%%%%%%%%%%%%%%%%%%%%%%
Now we have,
\begin{eqnarray*}
b_{1}&&= \frac{1}{\sqrt{k}D_k}\lambda_{1}\cos\sqrt{k}s[\sqrt{k}\cos\sqrt{k}(\xi-1)-\lambda_{2}\sin\sqrt{k}(\eta-\xi)],\\
&&=\frac{1}{\sqrt{k}D_k}\lambda_{1}\cos\sqrt{k}s[\cos\sqrt{k}\xi(\sqrt{k}\cos\sqrt{k}-\lambda_{2}\sin\sqrt{k}\eta)+\sin\sqrt{k}\xi(\sqrt{k}\sin\sqrt{k}+\lambda_{2}\cos\sqrt{k}\eta)].
\end{eqnarray*}
By applying $[A_1]$ it can be easily seen that  $a_{1},~ b_{1}\geq0$. Hence $G(x,s)\geq0$, for $0\leq x\leq s\leq\xi$. In similar fashion we can easily prove for other intervals.
\end{proof}
\begin{lemma}
\label{ppr2sec3lemma3} If $g(x)$ is continuous in $[0,1]$ and $c\geq 0$ is any constant, then the solution $u(x)\in C^2(0,1)$ of  BVP \eqref{ppr2sec2eq1} and \eqref{ppr2sec2eq2} is given by
\begin{eqnarray}
\label{ppr2sec3eq3}u(x)=\displaystyle{\frac{-c}{D_k}\big(\sqrt{k}\cos\sqrt{k}x+\lambda_1 \sin\sqrt{k}(x-\xi)\big)}-\int_{0}^{1}G(x,s)g(s)ds.
\end{eqnarray}

\end{lemma}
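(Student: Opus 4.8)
The plan is to exploit the linearity of the operator $L[u]:=-u''-ku$ and to split the solution as $u=v+w$, where $w$ carries the forcing term $g$ subject to the \emph{homogeneous} boundary data of \eqref{ppr2sec3eq1}-\eqref{ppr2sec3eq2}, while $v$ carries the inhomogeneous boundary constant $c$ and solves the homogeneous equation $L[v]=0$. Concretely I would set
\begin{equation*}
w(x) = -\int_{0}^{1} G(x,s)\, g(s)\,ds, \qquad v(x) = \frac{-c}{D_k}\bigl(\sqrt{k}\cos\sqrt{k}x + \lambda_{1}\sin\sqrt{k}(x-\xi)\bigr),
\end{equation*}
and then verify that $u=v+w$ satisfies \eqref{ppr2sec2eq1}-\eqref{ppr2sec2eq2}; since $D_k\neq 0$ the homogeneous problem has only the trivial solution, so this constructed $u$ is \emph{the} solution.

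First I would handle $w$. By Lemma \ref{ppr2sec3lemma1}, $G(x,s)$ is the Green's function of \eqref{ppr2sec3eq1}-\eqref{ppr2sec3eq2}, so by its construction $w$ solves $-w''-kw=g$ on $(0,1)$ together with the homogeneous conditions $w'(0)=\lambda_{1}w(\xi)$ and $w'(1)=\lambda_{2}w(\eta)$ (the sign in front of the integral is exactly the orientation convention fixed when $G$ is derived). Because $g\in C[0,1]$ and the only irregularity of $G$ is the prescribed jump of $\partial_x G$ across $x=s$, the representation $w$ is genuinely $C^2(0,1)$; this is the standard smoothing argument for Green's function integrals and requires no new idea.

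The substance lies in $v$. Expanding $\sin\sqrt{k}(x-\xi)=\sin\sqrt{k}x\cos\sqrt{k}\xi-\cos\sqrt{k}x\sin\sqrt{k}\xi$ exhibits $v$ as a linear combination of $\cos\sqrt{k}x$ and $\sin\sqrt{k}x$, whence $v''+kv=0$, i.e. $L[v]=0$. The condition $v'(0)=\lambda_{1}v(\xi)$ is immediate, since after differentiating and evaluating at $x=0$ both sides reduce to $\tfrac{-c}{D_k}\lambda_{1}\sqrt{k}\cos\sqrt{k}\xi$. The remaining condition $v'(1)=\lambda_{2}v(\eta)+c$ is the crux and the only place where the explicit value of $D_k$ intervenes: after differentiating $v$, clearing the common factor $-c/D_k$, and invoking the parity identity $\cos\sqrt{k}(\xi-1)=\cos\sqrt{k}(1-\xi)$, the desired equality collapses to precisely $D_k=k\sin\sqrt{k}+\lambda_{2}\sqrt{k}\cos\sqrt{k}\eta+\lambda_{1}\bigl(\lambda_{2}\sin\sqrt{k}(\eta-\xi)-\sqrt{k}\cos\sqrt{k}(\xi-1)\bigr)$, which is exactly the definition given after Lemma \ref{ppr2sec3lemma1}.

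Finally I would assemble the pieces: $L[u]=L[v]+L[w]=0+g=g$, while the boundary functionals add as $u'(0)=\lambda_{1}v(\xi)+\lambda_{1}w(\xi)=\lambda_{1}u(\xi)$ and $u'(1)=\bigl(\lambda_{2}v(\eta)+c\bigr)+\lambda_{2}w(\eta)=\lambda_{2}u(\eta)+c$, matching \eqref{ppr2sec2eq1}-\eqref{ppr2sec2eq2}. The main obstacle is purely computational, namely the $x=1$ boundary check for $v$; its difficulty is entirely the bookkeeping of the trigonometric terms so that they reproduce $D_k$, and there is no conceptual barrier once the decomposition $u=v+w$ is in place.
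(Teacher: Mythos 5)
Your proof is correct and, in substance, the same as the paper's: the paper disposes of this lemma in one line (``by the concept of CF and PI''), and your decomposition $u=v+w$ is precisely that complementary-function/particular-integral split, with your $x=1$ boundary computation collapsing to the stated $D_k$ exactly as it must. You supply the detail the paper omits---the boundary checks for $v$, the uniqueness argument via $D_k\neq 0$---and your only soft spot, the appeal to the ``orientation convention'' for the sign in front of the Green's-function integral, is a gap the paper itself leaves to the cited reference \cite{verma2019monotone}.
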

\begin{proof} It is easy to deduce by using the concept of CF (Complimentary Function) and PI (Particular Integral).
\end{proof}
\begin{prop}{\textbf{Anti-Maximum Principle:}}
\label{ppr2sec3prop1} If $g(x)\geq0$ is continuous in $[0,1]$ and $c\geq 0$ is any constant. Suppose $[A_1]$ is satisfied, then the solution $u(x)$ given in equation \eqref{ppr2sec3eq3} is non-positive.
\end{prop}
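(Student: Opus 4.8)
The plan is to exploit the explicit solution formula from Lemma~\ref{ppr2sec3lemma3}, namely
\begin{eqnarray*}
u(x)=\frac{-c}{D_k}\big(\sqrt{k}\cos\sqrt{k}x+\lambda_1 \sin\sqrt{k}(x-\xi)\big)-\int_{0}^{1}G(x,s)g(s)\,ds,
\end{eqnarray*}
and to show that under $[A_1]$ both terms on the right-hand side are non-positive, so that their sum is non-positive. First I would handle the integral term. Since Lemma~\ref{ppr2sec3lemma2} establishes $G(x,s)\geq 0$ for all $x,s$ whenever $[A_1]$ holds, and since we are given $g(x)\geq 0$ continuous on $[0,1]$, the integrand $G(x,s)g(s)$ is non-negative; hence $\int_{0}^{1}G(x,s)g(s)\,ds\geq 0$, and therefore $-\int_{0}^{1}G(x,s)g(s)\,ds\leq 0$. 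This step is essentially immediate once the sign of the Green's function is in hand.

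The remaining work is the boundary-data term $\frac{-c}{D_k}\big(\sqrt{k}\cos\sqrt{k}x+\lambda_1 \sin\sqrt{k}(x-\xi)\big)$. Here $c\geq 0$ by hypothesis and $D_k>0$ by $[A_1]$, so the prefactor $-c/D_k$ is non-positive; it then suffices to prove that the bracket $h(x):=\sqrt{k}\cos\sqrt{k}x+\lambda_1 \sin\sqrt{k}(x-\xi)$ is non-negative on $I$. I expect this to be the main obstacle, since $\sin\sqrt{k}(x-\xi)$ is negative for $x<\xi$, so positivity is not pointwise-obvious and must be extracted from the assumptions in $[A_1]$. The strategy is to rewrite $h(x)$ using the angle-addition identity, expanding $\sin\sqrt{k}(x-\xi)=\sin\sqrt{k}x\cos\sqrt{k}\xi-\cos\sqrt{k}x\sin\sqrt{k}\xi$, and to regroup into a combination of the quantities $\sqrt{k}-\lambda_1\sin\sqrt{k}\xi>0$ and $\sqrt{k}\cos\sqrt{k}-\lambda_2\sin\sqrt{k}\eta\geq 0$ that $[A_1]$ controls — exactly the regrouping trick already used in the proof of Lemma~\ref{ppr2sec3lemma2} to extract the factor $(\sqrt{k}-\lambda_1\sin\sqrt{k}\xi)$.

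Concretely, I would write $h(x)=\cos\sqrt{k}x\,(\sqrt{k}-\lambda_1\sin\sqrt{k}\xi)+\lambda_1\sin\sqrt{k}x\cos\sqrt{k}\xi$, so that on the range $0<\sqrt{k}<\pi/2$, where $\cos\sqrt{k}x\geq 0$, $\sin\sqrt{k}x\geq 0$ and $\cos\sqrt{k}\xi\geq 0$, each summand is non-negative: the first because $\cos\sqrt{k}x\geq 0$ and $\sqrt{k}-\lambda_1\sin\sqrt{k}\xi>0$ from $[A_1]$, the second because $\lambda_1\geq 0$. This yields $h(x)\geq 0$, hence $\frac{-c}{D_k}h(x)\leq 0$. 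Combining the two non-positive contributions gives $u(x)\leq 0$ for all $x\in I$, which is the claim. The only delicate point to watch is confirming that the operative trigonometric factors keep their sign throughout $[0,1]$ given $k\in(0,\pi^2/4)$, so that the decomposition of $h$ into manifestly non-negative pieces is valid; once that is secured the argument closes.
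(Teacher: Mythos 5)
Your proof is correct and takes essentially the same route as the paper: the paper likewise splits $u$ into the boundary term plus the Green's-function integral, rewrites $\sqrt{k}\cos\sqrt{k}x+\lambda_1\sin\sqrt{k}(x-\xi)$ via the angle-addition identity as $\cos\sqrt{k}x\,(\sqrt{k}-\lambda_1\sin\sqrt{k}\xi)+\lambda_1\cos\sqrt{k}\xi\,\sin\sqrt{k}x$ (the paper's displayed version appears to contain a typo, writing $1$ where $\sqrt{k}$ should stand, which your decomposition silently corrects), and concludes from $[A_1]$ together with Lemma~\ref{ppr2sec3lemma2}. Your version is merely more explicit about why each trigonometric factor keeps its sign on $[0,1]$ for $0<\sqrt{k}<\pi/2$, a check the paper compresses into a single line.
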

\begin{proof} Given that $g(x)\geq0$, $c\geq 0$ and $[A_1]$ is satisfied. Now \eqref{ppr2sec3eq3} can be written as,
\begin{eqnarray*}
u(x)=\displaystyle{\frac{-c}{D_k}\big(\cos\sqrt{k}x(1-\lambda_1 \sin\sqrt{k}\xi)+\lambda_1 \cos\sqrt{k}\xi \sin\sqrt{k}x\big)}-\int_{0}^{1}G(x,s)g(s)ds.
\end{eqnarray*}
 Applying  $[A_1]$ and lemma \ref{ppr2sec3lemma2} in above equation we can easily obtain the required result.
\end{proof}
\subsection{Existence of Some Differential Inequalities}\label{ppr2sec3subsec2}
\begin{lemma}
\label{ppr2sec3lemma4}
Suppose $L_1\in \mathbb{R}^+$ and $L_2(x):[0,1]\rightarrow \mathbb{R}^+ $ are such that
\begin{enumerate}[label=(\roman*)]
 \item $ L_1-k\leq 0,$
\item $L_2(0)=0,~ L_2'(x)\geq 0.$
\end{enumerate}
 then the following inequalities hold,
\begin{itemize}
\item[(a)] If $ (L_1-k)\cos\sqrt{k}+L_2(x)\sqrt{k}\sin\sqrt{k}\leq 0,$ then $$Y_1(x)= (L_1-k)\cos\sqrt{k}x+L_2(x)\sqrt{k}\sin\sqrt{k}x\leq 0, ~~ \forall x\in [0,1]. $$
\item[(b)] If $(L_1-k)+\text{sup} L_2'(x)\leq 0,$ then $$Y_2(x)= (L_1-k)\sin\sqrt{k}x+L_2(x)\sqrt{k}\cos\sqrt{k}x\leq 0, ~~ \forall x\in [0,1]. $$
\end{itemize}
\end{lemma}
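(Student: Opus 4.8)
The plan is to prove both inequalities by a single monotonicity argument applied to $Y_1$ and $Y_2$ directly, exploiting that we are in the range $0<k<\pi^{2}/4$. This guarantees $\sqrt{k}\,x\in[0,\pi/2)$ for every $x\in[0,1]$, hence $\cos\sqrt{k}x>0$ and $\sin\sqrt{k}x\ge 0$ throughout $I$. These sign facts, together with hypotheses (i) and (ii), are exactly what is needed to fix the sign of the derivatives $Y_1'$ and $Y_2'$, after which each inequality reduces to a boundary evaluation.

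For part (a), I would differentiate to obtain
$$Y_1'(x)=\sqrt{k}\sin\sqrt{k}x\,\big[(k-L_1)+L_2'(x)\big]+k\,L_2(x)\cos\sqrt{k}x.$$
By (i), $k-L_1\ge 0$; by (ii), $L_2'(x)\ge 0$ and $L_2(x)\ge 0$. Combined with $\sin\sqrt{k}x\ge 0$ and $\cos\sqrt{k}x>0$, this forces $Y_1'(x)\ge 0$ on $[0,1]$, so $Y_1$ is non-decreasing and is maximized at $x=1$. At that endpoint the standing hypothesis of (a) reads precisely $Y_1(1)=(L_1-k)\cos\sqrt{k}+L_2(1)\sqrt{k}\sin\sqrt{k}\le 0$, whence $Y_1(x)\le Y_1(1)\le 0$ for all $x\in[0,1]$.

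For part (b), I would first evaluate the boundary value $Y_2(0)=(L_1-k)\cdot 0+L_2(0)\,\sqrt{k}=0$, using $L_2(0)=0$ from (ii). Differentiating gives
$$Y_2'(x)=\sqrt{k}\cos\sqrt{k}x\,\big[(L_1-k)+L_2'(x)\big]-k\,L_2(x)\sin\sqrt{k}x.$$
Here the hypothesis of (b) enters: since $L_2'(x)\le \sup L_2'$, we have $(L_1-k)+L_2'(x)\le (L_1-k)+\sup L_2'\le 0$, and because $\cos\sqrt{k}x>0$ the first term is $\le 0$; the second term is $\le 0$ since $L_2(x)\ge 0$, $k>0$ and $\sin\sqrt{k}x\ge 0$. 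Thus $Y_2'(x)\le 0$, so $Y_2$ is non-increasing, and together with $Y_2(0)=0$ this yields $Y_2(x)\le 0$ on $[0,1]$.

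Once the two derivative expressions are written down the verification is routine; the only care needed is the sign bookkeeping. The point worth flagging is the asymmetry between the two hypotheses: in (a) the $L_2'$ contribution already carries the favourable sign, so only the endpoint value $Y_1(1)$ must be supplied by assumption, whereas in (b) the factor multiplying the bracket is $\cos\sqrt{k}x>0$ rather than $\sin\sqrt{k}x$, and the condition $(L_1-k)+\sup L_2'\le 0$ is exactly what is required to dominate the positive $L_2'(x)$ term uniformly in $x$. This is the (mild) crux of the argument.
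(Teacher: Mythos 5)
Your proposal is correct, and part (b) coincides with the paper's own proof: the same derivative computation $Y_2'(x)=\sqrt{k}\bigl((L_1-k)+L_2'(x)\bigr)\cos\sqrt{k}x-kL_2(x)\sin\sqrt{k}x\leq 0$, followed by $Y_2(0)=0$ and monotone decrease. For part (a), however, you take a genuinely different route. The paper never differentiates $Y_1$: it uses only that $\cos$ is decreasing and $\sin$ is increasing on $(0,\pi/2)$, so that $(L_1-k)\cos\sqrt{k}x\leq(L_1-k)\cos\sqrt{k}$ (since $L_1-k\leq 0$) and $L_2(x)\sqrt{k}\sin\sqrt{k}x\leq L_2(x)\sqrt{k}\sin\sqrt{k}$, giving $Y_1(x)\leq(L_1-k)\cos\sqrt{k}+L_2(x)\sqrt{k}\sin\sqrt{k}\leq 0$ directly from the stated hypothesis, pointwise in $x$. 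That argument is a two-line comparison needing neither differentiability of $L_2$ nor $L_2'\geq 0$; hypothesis (ii) plays no role in the paper's (a). Your version instead shows $Y_1'(x)=\sqrt{k}\sin\sqrt{k}x\,\bigl[(k-L_1)+L_2'(x)\bigr]+kL_2(x)\cos\sqrt{k}x\geq 0$, which does consume (i) and (ii), and then invokes the hypothesis only through its single instance at $x=1$, namely $Y_1(1)\leq 0$. Note that this is legitimate and in fact slightly sharper: since $L_2$ is nondecreasing, the paper's hypothesis ``for all $x$'' is equivalent to its $x=1$ instance, so you prove the same conclusion from a formally weaker assumption, at the price of using the smoothness and monotonicity of $L_2$ that the paper's (a) leaves untouched. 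What your approach buys is uniformity — both parts fall under one derivative-plus-boundary-evaluation scheme — while the paper's buys economy of hypotheses in (a); your closing remark correctly identifies why the $\sup L_2'$ condition is indispensable only in (b), where the unfavourable $L_2'(x)$ term is multiplied by $\cos\sqrt{k}x>0$ rather than absorbed with a favourable sign.
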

\begin{proof} We divide the proof in two parts.\\
(a) We know that $\cos x$ is decreasing and $\sin x$ is increasing function in $(0,\frac{\pi}{2}).$ Using these properties we have, $$ (L_1-k)\cos\sqrt{k}x+L_2(x)\sqrt{k}\sin\sqrt{k}x\leq (L_1-k)\cos\sqrt{k}+L_2(x)\sqrt{k}\sin\sqrt{k},~ \forall x\in [0,1].$$
The desired result follows from the assumption.\\
(b) We have, $Y'_{2}(x)=\sqrt{k}\big((L_1-k)+L_2'(x)\big)\cos\sqrt{k}x-k L_2(x)\sin\sqrt{k}x\leq 0$, whenever $(L_1-k)+\text{sup} L_2'(x)\leq 0$. Therefore, $Y_2(x)$ is decreasing $\forall x\in [0,1]$  and $Y_2(0)=0$. Hence the desired result.
\end{proof}
\begin{lemma}
\label{ppr2sec3lemma4a} Suppose $[A_1]$ and conditions of lemma \ref{ppr2sec3lemma4} are satisfied then the following inequalities hold,
\begin{itemize}
\item[(a)] $ (L_1-k)(\sqrt{k}\cos\sqrt{k}x+\lambda_1 \sin\sqrt{k}(x-\xi))\pm L_2(x)\sqrt{k}(\sqrt{k}\sin\sqrt{k}x-\lambda_1 \cos\sqrt{k}(x-\xi))\leq 0,~~ \forall x\in [0,1]$.
\item[(b)] $(L_1-k) G(x,s)\pm L_2(x)\frac{\partial G(x,s)}{\partial x}\leq 0;\quad x\neq s,\quad \forall x\in [0,1]$.
\end{itemize}
\end{lemma}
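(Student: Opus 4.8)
The plan is to reduce both inequalities to the two scalar inequalities $Y_1(x)\le 0$ and $Y_2(x)\le 0$ of Lemma \ref{ppr2sec3lemma4}, combined with the elementary sign facts that on $[0,1]$ one has $\cos\sqrt{k}x\ge 0$ and $\sin\sqrt{k}x\ge 0$ (since $0<k<\pi^2/4$ forces $\sqrt{k}x\in[0,\pi/2)$), together with $L_1-k\le 0$ and $L_2(x)\ge 0$. The device that makes everything collapse onto $Y_1,Y_2$ is to rewrite the relevant $x$-factors in ``pure'' form. For part (a), expanding $\sin\sqrt{k}(x-\xi)$ gives $\phi(x):=\sqrt{k}\cos\sqrt{k}x+\lambda_1\sin\sqrt{k}(x-\xi)=p\cos\sqrt{k}x+q\sin\sqrt{k}x$, where $p=\sqrt{k}-\lambda_1\sin\sqrt{k}\xi$ and $q=\lambda_1\cos\sqrt{k}\xi$. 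Assumption $[A_1]$ gives $p>0$, and $\cos\sqrt{k}\xi\ge 0$ gives $q\ge 0$. The key observation is that the second factor equals $\sqrt{k}\big(\sqrt{k}\sin\sqrt{k}x-\lambda_1\cos\sqrt{k}(x-\xi)\big)=\sqrt{k}\big(p\sin\sqrt{k}x-q\cos\sqrt{k}x\big)=-\phi'(x)$, so the whole left side of (a) is $(L_1-k)\phi(x)\mp L_2(x)\phi'(x)$.

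Substituting and collecting the $p$– and $q$–coefficients, I would split into the two sign cases. The top–sign expression becomes
$$p\big[(L_1-k)\cos\sqrt{k}x+L_2(x)\sqrt{k}\sin\sqrt{k}x\big]+q\big[(L_1-k)\sin\sqrt{k}x-L_2(x)\sqrt{k}\cos\sqrt{k}x\big]=pY_1(x)+q\big[(L_1-k)\sin\sqrt{k}x-L_2(x)\sqrt{k}\cos\sqrt{k}x\big],$$
while the bottom–sign expression becomes $p\big[(L_1-k)\cos\sqrt{k}x-L_2(x)\sqrt{k}\sin\sqrt{k}x\big]+qY_2(x)$. In the first line $Y_1(x)\le 0$ by Lemma \ref{ppr2sec3lemma4}(a) and the remaining bracket is a sum of two non-positive terms; in the second line $Y_2(x)\le 0$ by Lemma \ref{ppr2sec3lemma4}(b) and the remaining bracket is again non-positive. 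Since $p>0$ and $q\ge 0$, both combinations are $\le 0$, proving (a).

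For part (b) I would handle the six branches of $G$ with one unifying representation. Using $[A_1]$ and the angle–addition identities exactly as in the proof of Lemma \ref{ppr2sec3lemma2}, each branch can be written as $G(x,s)=\frac{1}{\sqrt{k}D_k}\big(M(s)\cos\sqrt{k}x+N(s)\sin\sqrt{k}x\big)$ with $x$–independent coefficients $M(s),N(s)\ge 0$ on the relevant $s$–range. For the branches whose $x$–factor is $\phi(x)$ or $\chi(x):=\sqrt{k}\cos\sqrt{k}(x-1)+\lambda_2\sin\sqrt{k}(x-\eta)$ this is just the pure–form computation of part (a) (and its $\lambda_2,\eta$–analogue, which works identically because $[A_1]$ gives $\sqrt{k}\cos\sqrt{k}-\lambda_2\sin\sqrt{k}\eta\ge0$); for the two ``mixed'' branches one first expands $\sin\sqrt{k}(s-x)$, respectively $\sin\sqrt{k}(x-s)$, to separate the $x$– and $s$–dependence. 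Differentiating, $\partial_x G=\frac{1}{\sqrt{k}D_k}\big(-\sqrt{k}M(s)\sin\sqrt{k}x+\sqrt{k}N(s)\cos\sqrt{k}x\big)$, so that
$$(L_1-k)G+L_2(x)\partial_x G=\frac{1}{\sqrt{k}D_k}\Big\{M(s)\big[(L_1-k)\cos\sqrt{k}x-L_2(x)\sqrt{k}\sin\sqrt{k}x\big]+N(s)\,Y_2(x)\Big\},$$
$$(L_1-k)G-L_2(x)\partial_x G=\frac{1}{\sqrt{k}D_k}\Big\{M(s)\,Y_1(x)+N(s)\big[(L_1-k)\sin\sqrt{k}x-L_2(x)\sqrt{k}\cos\sqrt{k}x\big]\Big\}.$$
In each, every bracket is $\le 0$ (via $Y_1,Y_2\le0$ and the sign facts) and $M(s),N(s)\ge 0$, which yields (b).

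The main obstacle is establishing $M(s),N(s)\ge 0$ for the two mixed branches $0\le x\le s\le\xi$ and $s\le x,\ s\le1$, because there the coefficient multiplying $\sin\sqrt{k}(s-x)$ (respectively $\sin\sqrt{k}(x-s)$) has indefinite sign a priori, so one cannot argue termwise. This is precisely the $a_1,b_1$ bookkeeping already carried out in Lemma \ref{ppr2sec3lemma2}: after separating variables, $M(s)$ coincides with the combination shown there to be non-negative, and the full strength of $[A_1]$ (notably $\sqrt{k}\cos\sqrt{k}-\lambda_2\sin\sqrt{k}\eta\ge0$ and $\sqrt{k}-\lambda_1\sin\sqrt{k}\xi>0$) enters exactly at this point. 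Once the representation with non-negative $M,N$ is secured, the remaining sign chase is routine.
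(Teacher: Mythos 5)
Your part (a) is correct and coincides with the paper's own computation: the paper expands the expression into $(\sqrt{k}-\lambda_1\sin\sqrt{k}\xi)Y_1(x)+\lambda_1\cos\sqrt{k}\xi\big[(L_1-k)\sin\sqrt{k}x-L_2(x)\sqrt{k}\cos\sqrt{k}x\big]$ exactly as you do, and your treatment of the branches $0\le x\le s\le\xi$, $0\le s\le x\le\xi$, $\xi\le x\le s\le\eta$, $\xi\le s\le x\le\eta$ and $\eta\le x\le s\le 1$ in part (b) matches the paper's case-by-case groupings into $Y_1$, $Y_2$ and $Y_3(x)=(L_1-k)\cos\sqrt{k}x-L_2(x)\sqrt{k}\sin\sqrt{k}x$; on those five branches your coefficients $M(s),N(s)$ are indeed nonnegative under $[A_1]$.

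However, part (b) has a genuine gap on the sixth branch $\eta\le s\le x\le 1$: there the separation of variables gives $M(s)=\sqrt{k}\cos\sqrt{k}\,\big(\sqrt{k}\cos\sqrt{k}s+\lambda_1\sin\sqrt{k}(s-\xi)\big)-\lambda_2\sin\sqrt{k}s\,\big(\sqrt{k}\cos\sqrt{k}\eta+\lambda_1\sin\sqrt{k}(\eta-\xi)\big)$, and nothing in $[A_1]$ controls the subtracted term, because $[A_1]$ bounds $\lambda_1\sin\sqrt{k}\xi$ and $\lambda_2\sin\sqrt{k}\eta$ while here the coefficient is $\lambda_2\sin\sqrt{k}s$ with $s$ ranging up to $1$. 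Concretely, with the paper's own example data $k=0.49$, $\lambda_1=2$, $\lambda_2=3$, $\xi=0.1$, $\eta=0.2$ (which satisfy $[A_1]$), one computes $M(1)\approx 0.917-1.610=-0.693<0$. Since $Y_3(x)\le 0$, the product $M(s)Y_3(x)$ in your top-sign identity is then nonnegative, so the final termwise sign chase genuinely fails on this branch; it is not merely that the $a_1,b_1$ bookkeeping of Lemma \ref{ppr2sec3lemma2} was left uncited. That bookkeeping transfers only to the mixed branch $0\le x\le s\le\xi$, where $s\le\xi$ makes the offending coefficient $\lambda_1\sin\sqrt{k}s\le\lambda_1\sin\sqrt{k}\xi$, which is exactly what $\sqrt{k}-\lambda_1\sin\sqrt{k}\xi>0$ controls; no analogue exists for $s\in[\eta,1]$. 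For this branch one must keep the shifted-angle grouping the paper uses, namely $Z_3\sqrt{k}\big((L_1-k)\cos\sqrt{k}(x-1)-L_2(x)\sqrt{k}\sin\sqrt{k}(x-1)\big)+\lambda_2 W\big((L_1-k)\sin\sqrt{k}(x-s)+L_2(x)\sqrt{k}\cos\sqrt{k}(x-s)\big)$ with $W=\sqrt{k}\cos\sqrt{k}\eta+\lambda_1\sin\sqrt{k}(\eta-\xi)$, and even then the second group is not termwise nonpositive (as $x\to s^{+}$ it tends to $\lambda_2 W L_2(x)\sqrt{k}\ge 0$), so the nonpositive first group must be shown to dominate quantitatively. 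This is precisely the delicate point the paper itself passes over with ``it is easy to show''; any complete proof of (b) on this branch requires such a domination estimate using the size of $L_2$, not a pure sign argument of the kind your unified representation was designed to deliver.
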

\begin{proof}
(a) Consider the positive sign case, i.e.,
\begin{eqnarray*}
&& (L_1-k)(\sqrt{k}\cos\sqrt{k}x+\lambda_1 \sin\sqrt{k}(x-\xi))+ L_2(x)\sqrt{k}(\sqrt{k}\sin\sqrt{k}x-\lambda_1 \cos\sqrt{k}(x-\xi))\\
=&&(\sqrt{k}-\lambda_1 \sin\sqrt{k}\xi)((L_1-k)\cos\sqrt{k}x+L_2(x)\sqrt{k}\sin\sqrt{k}x)+\lambda_1 \cos\sqrt{k}\xi((L_1-k)\sin\sqrt{k}x-L_2(x)\sqrt{k}\cos\sqrt{k}x).
\end{eqnarray*}
By using inequality $(a)$ of lemma \ref{ppr2sec3lemma4} we conclude the result. Similarly we can also prove for negative sign case.\\

(b) Consider the positive sign case, i.e.,
 \begin{eqnarray}
\label{ppr2sec3eqd1}&&(L_1-k) G(x,s)+ L_2(x)\frac{\partial G(x,s)}{\partial x}, \quad x\neq s.
\end{eqnarray}
To evaluate sign of \eqref{ppr2sec3eqd1} we first evaluate $\frac{\partial G}{\partial x}$, $x \neq s$, from lemma \ref{ppr2sec3lemma1} for each interval individually. Then we substitute the values of $G(x,s)$ and $\frac{\partial G}{\partial x}$  for each sub interval of $[0,1]$ in equation \eqref{ppr2sec3eqd1}.\\
For brevity, let us define,
\begin{eqnarray*}
Z_{1}&=&\sqrt{k}\cos\sqrt{k}(s-1)+\lambda_{2}\sin\sqrt{k}(s-\eta),\\
Z_{2}&=&\sqrt{k}\cos\sqrt{k}(\xi-1)+\lambda_{2}\sin\sqrt{k}(\xi-\eta),\\
Z_{3}&=&\sqrt{k}\cos\sqrt{k}s+\lambda_1\sin\sqrt{k}(s-\xi),\\
Y_{3}&=&(L_1-k)\cos\sqrt{k}x-L_2(x)\sqrt{k}\sin\sqrt{k}x.
\end{eqnarray*}
By simple calculations it can be easily seen that $Z_{1},~Z_{2},~Z_{3}\geq 0$ and  $Y_{3}(x)\leq 0.$\\
(i) When $0\leq x\leq s\leq\xi:$
  Then $~G(x,s)$ and $\frac{\partial{G}}{\partial{x}}$ can be written as,
\begin{eqnarray*}
&&G(x,s)=Z_{1}\sqrt{k}\cos\sqrt{k}x-Z_{2}\lambda_{1}\sin\sqrt{k}(s-x),\\
&&\frac{\partial{G(x,s)}}{\partial{x}}=-Z_{1}k\sin\sqrt{k}x+Z_{2}\lambda_{1}\sqrt{k}\cos\sqrt{k}(s-x),\quad x\neq s.
\end{eqnarray*}
Now equation \eqref{ppr2sec3eqd1} becomes,
\begin{eqnarray*}
&&(L_1-k) G(x,s)+ L_2(x)\frac{\partial G}{\partial x}=Y_{1}(x)(Z_{1}\sqrt{k}-Z_{2}\lambda_{1}\sin\sqrt{k}s)+Y_{2}(x)Z_{2}\lambda_{1}\cos\sqrt{k}s,\quad x\neq s.
\end{eqnarray*}
where $ Y_{1}(x) $ and $ Y_{2}(x) $ are given in lemma  \ref{ppr2sec3lemma4}. Substituting values of $Z_{1}$ and $Z_{2}$ in $(Z_{1}\sqrt{k}-Z_{2}\lambda_{1}\sin\sqrt{k}s)$ and simplifying we get,
\begin{eqnarray*}
Z_{1}\sqrt{k}-Z_{2}\lambda_{1}\sin\sqrt{k}s&=&(\sqrt{k}\cos\sqrt{k}-\lambda_{2}\sin\sqrt{k}\eta)(\sqrt{k}\cos\sqrt{k}s-\lambda_{1}\sin\sqrt{k}s\cos\sqrt{k}\xi)\\
&\quad& +(\sqrt{k}\sin\sqrt{k}+\lambda_{2}\cos\sqrt{k}\eta)(\sqrt{k}\sin\sqrt{k}s-\lambda_{1}\sin\sqrt{k}s\sin\sqrt{k}\xi),\\
\Rightarrow Z_{1}\sqrt{k}-Z_{2}\lambda_{1}\sin\sqrt{k}s
&\geq& \cos\sqrt{k}\xi(\sqrt{k}\cos\sqrt{k}-\lambda_{2}\sin\sqrt{k}\eta)(\sqrt{k}-\lambda_{1}\sin\sqrt{k}s)\\
&\quad& +\sin\sqrt{k}s(\sqrt{k}\sin\sqrt{k}+\lambda_{2}\cos\sqrt{k}\eta)(\sqrt{k}-\lambda_{1}\sin\sqrt{k}\xi).
\end{eqnarray*}
Applying inequality $[A_1]$ and lemma \ref{ppr2sec3lemma4} we obtain that equation \eqref{ppr2sec3eqd1} is non positive.\\
(ii) When $0\leq s\leq x\leq\xi:~$ From lemma \ref{ppr2sec3lemma1}\\
\begin{eqnarray*}
&&\frac{\partial G(x,s)}{\partial x}=\sqrt{k}\cos\sqrt{k}s(-k\sin\sqrt{k}(x-1)+\lambda_{2}\sqrt{k}\cos\sqrt{k}(x-\eta)),\quad x\neq s.
\end{eqnarray*}
Now equation \eqref{ppr2sec3eqd1} becomes,
\begin{eqnarray*}
(L_1-k) G(x,s)+ L_2(x)\frac{\partial G}{\partial x}&=&\sqrt{k}\cos\sqrt{k}s\big(\sqrt{k}\{(L_1-k)\cos\sqrt{k}(x-1)-L_2(x)\sqrt{k}\sin\sqrt{k}(x-1)\}\\
&\quad& +\lambda_{2}\{(L_1-k)\sin\sqrt{k}(x-\eta)+L_2(x)\sqrt{k}\cos\sqrt{k}(x-\eta)\}\big),\\
&=&\sqrt{k}\cos\sqrt{k}s \big(Y_3(x)(\sqrt{k}\cos\sqrt{k}-\lambda_{2}\sin\sqrt{k}\eta)
+Y_{2}(x)(\sqrt{k}\sin\sqrt{k}+\lambda_{2}\cos\sqrt{k}\eta)\big).
\end{eqnarray*}
Applying inequality $[A_1]$ and (b) of lemma \ref{ppr2sec3lemma4} we get the required result.\\
(iii) When $\xi\leq x\leq s\leq\eta:$
\begin{eqnarray*}
&&G(x,s)=Z_{1}(\sqrt{k}\cos\sqrt{k}x+\lambda_{1}\sin\sqrt{k}(x-\xi)),\\
&&\frac{\partial G(x,s)}{\partial x}=Z_{1}(-{k}\sin\sqrt{k}x+\lambda_{1}\sqrt{k}\cos\sqrt{k}(x-\xi)),\quad x \neq s.
\end{eqnarray*}
Now for $x \neq s$,
\begin{eqnarray*}
(L_1-k) G(x,s)+ L_2(x)\frac{\partial G}{\partial x}&=&Z_{1}\big((L_1-k)(\sqrt{k}\cos\sqrt{k}x+\lambda_{1}\sin\sqrt{k}(x-\xi))\\
&\quad& +L_2(x)\sqrt{k}(-\sqrt{k}\sin\sqrt{k}x+\lambda_{1}\cos\sqrt{k}(x-\xi)) \big)\\
&=&Z_{1}\big(\sqrt{k} Y_{3}(x)+\lambda_{1}\big(Y_{2}(x)\cos\sqrt{k}\xi- Y_{3}(x)\sin\sqrt{k}\xi\big)\big).
\end{eqnarray*}
Applying inequality $[A_1]$ and (b) of lemma \ref{ppr2sec3lemma4} we obtain that equation \eqref{ppr2sec3eqd1} is non-positive for this case.\\
(iv) When $\xi\leq s\leq x \leq\eta:$
\begin{eqnarray*}
&&G(x,s)=Z_3\big(\sqrt{k}\cos\sqrt{k}(x-1)+\lambda_2\sin\sqrt{k}(x-\eta)\big),\\
&&\frac{\partial G(x,s)}{\partial x}=Z_3\big(-k\sin\sqrt{k}(x-1)+\sqrt{k}\lambda_2\cos\sqrt{k}(x-\eta)\big), \quad x \neq s.
\end{eqnarray*}
\begin{eqnarray*}
(L_1-k) G(x,s)+ L_2(x)\frac{\partial G}{\partial x}&=&Z_3[\sqrt{k}\{(L_1-k)\cos\sqrt{k}(x-1)-L_2(x)\sqrt{k}\sin\sqrt{k}(x-1)\}\\
&\quad& +\lambda_2 \{(L_1-k)\sin\sqrt{k}(x-\eta)+L_2(x)\sqrt{k}\cos\sqrt{k}(x-\eta)\}]\\
&=&Z_3[\sqrt{k}\big(Y_3(x) \cos\sqrt{k}+Y_2(x)\sin\sqrt{k}\big)+\lambda_2\big(Y_2(x)\cos\sqrt{k}\eta-Y_3(x) \sin\sqrt{k}\eta\big)]\\
&=&Z_3\big(Y_3(x)(\sqrt{k}\cos\sqrt{k}-\lambda_2\sin\sqrt{k}\eta)+Y_2(x)(\sqrt{k}\sin\sqrt{k}+\lambda_2\cos\sqrt{k}\eta)\big)\leq0.
\end{eqnarray*}
(v) When $\eta\leq x\leq s\leq 1:$
\begin{eqnarray*}
&&G(x,s)=\sqrt{k}\cos\sqrt{k}(s-1)(\sqrt{k}\cos\sqrt{k}x+\lambda_1\sin\sqrt{k}(x-\xi)),\\
&&\frac{\partial G(x,s)}{\partial x}=k\cos\sqrt{k}(s-1)\big(-\sqrt{k}\sin\sqrt{k}x+\lambda_1\cos\sqrt{k}(x-\xi)\big), \quad x \neq s.
\end{eqnarray*}
Equation \eqref{ppr2sec3eqd1} implies,
\begin{eqnarray*}
(L_1-k) G(x,s)+ L_2(x)\frac{\partial G}{\partial x}=\sqrt{k}\cos\sqrt{k}(s-1)[\sqrt{k}Y_3(x)+\lambda_1\big(Y_2(x)\cos\sqrt{k}\xi-Y_3(x)\sin\sqrt{k}\xi\big)].
\end{eqnarray*}
Clearly this equation is $\leq 0.$\\
(vi) When $\eta \leq s\leq x\leq 1:$
\begin{eqnarray*}
&&G(x,s)=Z_3\sqrt{k}\cos\sqrt{k}(x-1)+\lambda_2\sin\sqrt{k}(x-s)(\sqrt{k}\cos\sqrt{k}\eta+\lambda_1\sin\sqrt{k}(\eta-\xi)),\\
&&\frac{\partial G(x,s)}{\partial x}=-Z_3k\sin\sqrt{k}(x-1)+\sqrt{k}\lambda_2\cos\sqrt{k}(x-s)(\sqrt{k}\cos\sqrt{k}\eta+\lambda_1\sin\sqrt{k}(\eta-\xi)), \quad x \neq s.
\end{eqnarray*}
Equation \eqref{ppr2sec3eqd1} implies,
\begin{eqnarray*}
(L_1-k)G(x,s)+ L_2(x)\frac{\partial G}{\partial x}&=&Z_3\sqrt{k}\big((L_1-k)\cos\sqrt{k}(x-1)-L_2(x)\sqrt{k}\sin\sqrt{k}(x-1)\big)+\lambda_2 \big(\sqrt{k}\cos\sqrt{k}\eta,\\
&\quad& +\lambda_1\sin\sqrt{k}(\eta-\xi)\big)\big((L_1-k)\sin\sqrt{k}(x-s)+L_2(x)\sqrt{k}\cos\sqrt{k}(x-s)\big)\\
&=&Z_3\sqrt{k}(Y_2(x)\cos\sqrt{k}+Y_3(x)\sin\sqrt{k})+\lambda_2(\sqrt{k}\cos\sqrt{k}\eta+\lambda_1\sin\sqrt{k}(\eta-\xi))\\
&\quad& (Y_3(x)\cos\sqrt{k}-Y_2(x)\sin\sqrt{k}s).
\end{eqnarray*}
Applying inequality $[A_1]$ and lemma \ref{ppr2sec3lemma4}, it is easy to show the above equation is non positive. Hence proof is complete for positive sign case. Similarly we can prove for negative sign case.

\end{proof}
\subsection{Non-Well Ordered Case: Construction of Upper-Lower Solutions}\label{ppr2sec3subsec3}
In this section upper-lower solutions are defined and some conditions on $c(x),~ d(x)$ and $\psi(x,u,u')$ are assumed. Then we define the sequence of functions $\{c_n(x)\}_n$ and $\{d_n(x)\}_n$ and develop monotone iterative method based on these sequences. We prove some lemmas which show that upper solutions are monotonically increasing and lower solutions are monotonically decreasing. Also we develop a theorem which gives that the sequence of functions $\{c_n(x)\}_n$ and $\{d_n(x)\}_n$ are uniformly convergent and converge to the solution of BVPs \eqref{ppr2sec1eq1}-\eqref{ppr2sec1eq2} under some sufficient conditions.
\begin{defn}
\label{ppr2sec3de1} Lower solution of BVPs \eqref{ppr2sec2eq1}-\eqref{ppr2sec2eq2} is defined as,
 \begin{eqnarray*}
&&-c{''}(x)\leq \psi(x,c,c') , \quad 0<x<1,\\
&&c'(0)= \lambda_{1}c(\xi), \quad c'(1)\leq \lambda_{2} c(\eta),
\end{eqnarray*}
where $c(x)\in C^2[0,1].$
\end{defn}
\begin{defn} \label{ppr2sec3de2} Upper solution of BVP \eqref{ppr2sec2eq1}-\eqref{ppr2sec2eq2} is defined as,
\begin{eqnarray*}
&&-d''(x)\geq \psi(x,d,d') , \quad 0<x<1,\\
&&d'(0)= \lambda_{1}d(\xi), \quad d'(1)\geq \lambda_{2} d(\eta),
\end{eqnarray*}
where $d(x)\in C^2[0,1]$.
\end{defn}
Let us assume some conditions as follows,\\

$[A_2]:$ there exist lower solution  $c(x)$ and upper solution $d(x)$ of  BVP \eqref{ppr2sec2eq1}-\eqref{ppr2sec2eq2} where $c(x),~ d(x) \in C^2(I)$\\ such that
 $$ c(x)\geq d(x) ~~\forall x\in I;$$

$[A_3]:$ $\psi(x,v,w):E\rightarrow \mathbb{R}$ is continuous function on $E:=\{(x,v,w)\in I\times \mathbb{R}^2:d(x)\leq v\leq c(x)\};$\\

$[A_4]:$ $\exists$ $L_1\geq 0$ such that $\forall~ (x,v_1,w),~ (x,v_2,w) \in E,$
$$v_1\leq v_2\Rightarrow \psi(x,v_2,w)-\psi(x,v_1,w)\leq L_1(v_2-v_1);$$

$[A_5]:$ $\exists$  $L_2(x):[0,1]\rightarrow \mathbb{R}^+ $ such that $L_2(x)\geq 0,~ L_2'(x)\geq 0$ and $\forall ~(x,v,w_1),~ (x,v,w_2) \in E,$
$$\mid \psi(x,v,w_1)-\psi(x,v,w_2)\mid \leq L_2(x)\mid(w_1-w_2)\mid.$$

Also we propose sequence of functions $\{c_n(x)\}_n$ and $\{d_n(x)\}_n$ such that,
$$c_0(x)=c(x),$$
\begin{eqnarray}
\label{ppr2sec3eq4}&&-c''_{n+1}(x)-k c_{n+1}= \psi(x,c_n,c'_n)-k c_{n},\\
\label{ppr2sec3eq5}&&c'_{n+1}(0)= \lambda_{1}c_{n+1}(\xi), \quad c'_{n+1}(1)= \lambda_{2} c_{n+1}(\eta),
\end{eqnarray}
and $$d_0(x)=d(x),$$
\begin{eqnarray}
\label{ppr2sec3eq6}&&-d''_{n+1}(x)-k d_{n+1}= \psi(x,d_n,d'_n)-k d_{n},\\
\label{ppr2sec3eq7}&&d'_{n+1}(0)=\lambda_{1}d_{n+1}(\xi), \quad d'_{n+1}(1)= \lambda_{2} d_{n+1}(\eta).
\end{eqnarray}

\begin{lemma}
\label{ppr2sec3lemma6} If $c_n(x)$ is lower solution of \eqref{ppr2sec2eq1}-\eqref{ppr2sec2eq2}. Then $c_n(x)\geq c_{n+1}(x),~~\forall x\in I,$ where $c_{n+1}(x)$ is defined by equation \eqref{ppr2sec3eq4}-\eqref{ppr2sec3eq5}.
\end{lemma}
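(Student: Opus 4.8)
The plan is to introduce the difference function $p(x) := c_{n+1}(x) - c_n(x)$ and show $p(x)\le 0$ on $I$ by recognizing $p$ as the solution of the linear problem \eqref{ppr2sec2eq1}-\eqref{ppr2sec2eq2} governed by the anti-maximum principle, Proposition \ref{ppr2sec3prop1}. The whole argument reduces to checking that the forcing term and the boundary constant that $p$ generates both have the correct sign.

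First I would subtract the lower-solution inequality from the defining iteration. Writing \eqref{ppr2sec3eq4} as $-c_{n+1}''-k c_{n+1}=\psi(x,c_n,c_n')-k c_n$ and using that $c_n$ is a lower solution, so $-c_n''\le \psi(x,c_n,c_n')$ (Definition \ref{ppr2sec3de1}), a direct computation cancels the linear terms and gives
$$-p''-kp=\psi(x,c_n,c_n')+c_n''\ge 0.$$
Thus the forcing term $g(x):=\psi(x,c_n,c_n')+c_n''$ is continuous, because $c_n\in C^2(I)$ and $\psi$ is continuous, and it is nonnegative.

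Next I would translate the boundary data. From \eqref{ppr2sec3eq5} the condition at $x=0$ is an exact equality, yielding $p'(0)=\lambda_1\big(c_{n+1}(\xi)-c_n(\xi)\big)=\lambda_1 p(\xi)$, which matches the homogeneous boundary relation. At $x=1$ the lower-solution inequality $c_n'(1)\le \lambda_2 c_n(\eta)$ gives $p'(1)=\lambda_2 c_{n+1}(\eta)-c_n'(1)\ge \lambda_2 p(\eta)$. Setting $c:=p'(1)-\lambda_2 p(\eta)\ge 0$, the function $p$ solves exactly the linear BVP \eqref{ppr2sec2eq1}-\eqref{ppr2sec2eq2} with data $g\ge 0$ and constant $c\ge 0$.

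Finally, since $[A_1]$ is assumed, Lemma \ref{ppr2sec3lemma3} ensures that $p$ coincides with the representation \eqref{ppr2sec3eq3}, so Proposition \ref{ppr2sec3prop1} immediately forces $p(x)\le 0$ on $I$, i.e. $c_{n+1}(x)\le c_n(x)$, as claimed. The only delicate point is the sign bookkeeping at the right endpoint: the lower-solution inequality $c_n'(1)\le \lambda_2 c_n(\eta)$ must point in exactly the direction that makes $c\ge 0$, which is precisely the hypothesis the anti-maximum principle needs. Beyond confirming every hypothesis of Proposition \ref{ppr2sec3prop1} is met, there is no genuine analytic obstacle.
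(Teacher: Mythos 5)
Your proposal is correct and follows essentially the same route as the paper: both set $u=c_{n+1}-c_n$, use the lower-solution inequality and the iteration scheme to obtain a nonnegative forcing term $g(x)=c_n''+\psi(x,c_n,c_n')\ge 0$ together with the boundary relations $u'(0)=\lambda_1 u(\xi)$ and $u'(1)\ge\lambda_2 u(\eta)$, and then invoke the anti-maximum principle (Proposition \ref{ppr2sec3prop1}) via the representation \eqref{ppr2sec3eq3} to conclude $u\le 0$. Your version is in fact slightly more careful than the paper's, since you explicitly identify the constant $c=u'(1)-\lambda_2 u(\eta)\ge 0$ and note the implicit dependence on $[A_1]$, both of which the paper leaves tacit.
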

\begin{proof}
Given that $c_n(x)$ is lower solution of \eqref{ppr2sec2eq1}-\eqref{ppr2sec2eq2} and $c_{n+1}(x)$ is given by equation \eqref{ppr2sec3eq4}-\eqref{ppr2sec3eq5}. So by using these assumptions we have,
\begin{eqnarray*}
&&-(c''_{n+1}-c''_n)-k(c_{n+1}-c_n)\geq 0, \quad n\in \mathbb{N},\\
&&(c_{n+1}-c_n)'(0)= \lambda_{1}(c_{n+1}-c_n)(\xi), \quad (c_{n+1}-c_n)'(1)\geq \lambda_{2} (c_{n+1}-c_n)(\eta).
\end{eqnarray*}
This is in the form of equations \eqref{ppr2sec2eq1}-\eqref{ppr2sec2eq2} with solution $u(x),$
where $u(x)= c_{n+1}-c_n ,~  g(x)=-(c''_{n+1}-c''_n)-k(c_{n+1}-c_n)\geq 0$ and $c\geq 0$. Hence result can be concluded from proposition \ref{ppr2sec3prop1} easily.
\end{proof}

\begin{prop}
\label{ppr2sec3prop2} Suppose $L_1\in \mathbb{R}^+$, $L_2(x):[0,1]\rightarrow \mathbb{R}^+ $ are such that $[A_1]-[A_5]$ and conditions of lemma \ref{ppr2sec3lemma4} hold then $\forall~ x\in [0,1]$, if $c_n(x)$ is lower solution of \eqref{ppr2sec2eq1}-\eqref{ppr2sec2eq2} then,
 $$(k-L_1)u(x)+L_2(sign~(u')) u'\leq 0,$$
 where $u(x)$ is any solution of BVPs \eqref{ppr2sec2eq1}-\eqref{ppr2sec2eq2}.
\end{prop}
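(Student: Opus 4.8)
The plan is to start from the Green's-function representation of the solution and reduce the claimed inequality to the pointwise sign facts that Lemma \ref{ppr2sec3lemma4a} has already established. Writing $P(x)=\sqrt{k}\cos\sqrt{k}x+\lambda_1\sin\sqrt{k}(x-\xi)$, Lemma \ref{ppr2sec3lemma3} gives $u(x)=\frac{-c}{D_k}P(x)-\int_0^1 G(x,s)g(s)\,ds$, where in the present setting $c\geq 0$ and $g\geq 0$ (these come from the lower-solution construction, exactly as in the hypotheses of Proposition \ref{ppr2sec3prop1}). Differentiating this representation --- and noting that the continuity of $G(x,s)$ across the diagonal $x=s$ makes the Leibniz boundary contributions cancel, so only the jump of $\partial G/\partial x$ is discarded on a null set --- I would record $u'(x)=\frac{-c}{D_k}P'(x)-\int_0^1 \frac{\partial G}{\partial x}(x,s)g(s)\,ds$. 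A small but useful observation is that $\sqrt{k}\big(\sqrt{k}\sin\sqrt{k}x-\lambda_1\cos\sqrt{k}(x-\xi)\big)=-P'(x)$, so the combination of trigonometric terms appearing in Lemma \ref{ppr2sec3lemma4a}(a) is exactly $-P'$.

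Next, for a \emph{fixed} sign $\epsilon\in\{+1,-1\}$ I would form the linear combination and regroup,
\begin{equation*}
(k-L_1)u(x)+\epsilon L_2(x)u'(x)=\frac{-c}{D_k}\big[(k-L_1)P(x)+\epsilon L_2(x)P'(x)\big]-\int_0^1\big[(k-L_1)G(x,s)+\epsilon L_2(x)\tfrac{\partial G}{\partial x}(x,s)\big]g(s)\,ds.
\end{equation*}
Lemma \ref{ppr2sec3lemma4a}(a), read through the identity above, says precisely that $(k-L_1)P+\epsilon L_2 P'\geq 0$ for both choices of $\epsilon$; likewise Lemma \ref{ppr2sec3lemma4a}(b) gives $(k-L_1)G+\epsilon L_2\,\partial_x G\geq 0$ for $x\neq s$ and both $\epsilon$. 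Since $-c/D_k\leq 0$ (because $c\geq 0$ and, by $[A_1]$, $D_k>0$) and $g\geq 0$, both terms on the right are nonpositive, whence $(k-L_1)u(x)+\epsilon L_2(x)u'(x)\leq 0$ holds for all $x\in[0,1]$ and each fixed $\epsilon$.

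Finally I would remove the artificial sign by choosing $\epsilon=\mathrm{sign}(u'(x))$ pointwise: on $\{u'\geq 0\}$ take $\epsilon=+1$ and on $\{u'\leq 0\}$ take $\epsilon=-1$, so that in either case $\epsilon u'=|u'|$ and the two inequalities combine to $(k-L_1)u(x)+L_2(x)|u'(x)|\leq 0$, which is the assertion (with $L_2(\mathrm{sign}\,u')u'$ understood as $L_2(x)|u'|$). At the points where $u'=0$ the derivative term drops out and one only needs $(k-L_1)u\leq 0$, which is immediate from $k\geq L_1$ (condition (i) of Lemma \ref{ppr2sec3lemma4}) together with $u\leq 0$ from the anti-maximum principle, Proposition \ref{ppr2sec3prop1}.

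I expect the main care-point, rather than any deep obstacle, to be the bookkeeping of signs: correctly identifying the trigonometric block in Lemma \ref{ppr2sec3lemma4a}(a) with $-P'$, matching the free sign $\pm$ of Lemma \ref{ppr2sec3lemma4a} to the pointwise sign of $u'$, and justifying the differentiation of the Green's representation across the diagonal. The genuinely hard sign analysis has already been absorbed into Lemma \ref{ppr2sec3lemma4a}, so this proposition should reduce to assembling those pieces together with $c\geq 0$, $g\geq 0$ and $D_k>0$.
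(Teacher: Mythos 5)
Your proposal is correct and follows essentially the same route as the paper's own proof: identify $u$ with the difference arising from the lower-solution construction so that $c\geq 0$ and $g\geq 0$, write $u$ via the representation \eqref{ppr2sec3eq3}, and reduce the inequality to Lemma \ref{ppr2sec3lemma4a}(a),(b) applied to the homogeneous part and to the Green's-function integrand. You are in fact somewhat more careful than the paper, making explicit the identity matching the trigonometric block with $-P'$, the cancellation of the Leibniz boundary terms at the diagonal, and the pointwise choice $\epsilon=\mathrm{sign}(u')$ that the paper compresses into its $\pm$ notation.
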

\begin{proof}
Let $u(x)= c_{n+1}-c_n.$ Given that  $c_n(x)$ satisfies \eqref{ppr2sec2eq1}-\eqref{ppr2sec2eq2} and $c_{n+1}$ is given by equation \eqref{ppr2sec3eq4}-\eqref{ppr2sec3eq5}. So by using these equations we have,
\begin{eqnarray*}
&&-u''(x)-ku(x)=-c''_{n+1}(x)+c''_n(x)-k c_{n+1}(x)+k c_n(x)=c''_n(x)+\psi(x,c_n,c'_n)\geq 0,\\
\text{and,}&& (c_{n+1}-c_n)'(0)= \lambda_{1}(c_{n+1}-c_n)(\xi), \quad (c_{n+1}-c_n)'(1)\geq \lambda_{2} (c_{n+1}-c_n)(\eta).
\end{eqnarray*}
 This is in the form of equations \eqref{ppr2sec2eq1}-\eqref{ppr2sec2eq2} with solution $u(x)$ therefore $u(x)$ can be written in the form of \eqref{ppr2sec3eq3} with $g(x)=c''_n(x)+\psi(x,c_n,c'_n)$. Hence to prove given inequality we substitute the value of $u(x)$ and $u'(x)$ from \eqref{ppr2sec3eq3} we obtain that
 \begin{multline*}
(k-L_1)u(x)+L_2(sign~(u')) u'= \frac{b}{D_k}[(L_1-k)(\sqrt{k}\cos\sqrt{k}x+\lambda_1 \sin\sqrt{k}(x-\xi))\pm L_2(x)\sqrt{k}(\sqrt{k}\sin\sqrt{k}x\\
 -\lambda_1 \cos\sqrt{k}(x-\xi))]+\int_{0}^{1}\Big[(L_1-k)G(x,s)\pm L_2(x)\frac{\partial G(x,s)}{\partial x}\Big]g(s)ds.
 \end{multline*}
 By inequality (a), (b) of lemma \ref{ppr2sec3lemma4a} result can be concluded easily.
\end{proof}
\begin{lemma}
\label{ppr2sec3lemma7}Suppose $L_1\in \mathbb{R}^+$, $L_2(x):[0,1]\rightarrow \mathbb{R}^+ $ are such that $[A_1]-[A_5]$ and conditions of lemma \ref{ppr2sec3lemma4} hold then the function $c_n(x)$ given by equation \eqref{ppr2sec3eq4}-\eqref{ppr2sec3eq5} satisfy,
\begin{itemize}
\item[(a)] $c_n(x)\geq c_{n+1}(x)$,
\item[(b)] $c_n(x)$ is lower solution of \eqref{ppr2sec2eq1}-\eqref{ppr2sec2eq2}.
\end{itemize}
\end{lemma}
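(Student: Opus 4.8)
The plan is to prove both claims simultaneously by induction on $n$, with induction hypothesis $H(n)$: the function $c_n(x)$ is a lower solution of \eqref{ppr2sec2eq1}-\eqref{ppr2sec2eq2}. The base case $H(0)$ is immediate, since $c_0=c$ is a lower solution by $[A_2]$. Granting $H(n)$, part (a) at index $n$ follows at once from Lemma \ref{ppr2sec3lemma6}, which yields $c_n\geq c_{n+1}$; the substantive task is then to deduce $H(n+1)$, i.e. that $c_{n+1}$ is again a lower solution. Closing this loop gives (a) for every $n$ and gives (b) as the statement $H(n)$ itself.

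To verify $H(n+1)$ I would first dispose of the boundary data. The scheme \eqref{ppr2sec3eq5} imposes $c_{n+1}'(0)=\lambda_{1}c_{n+1}(\xi)$ and $c_{n+1}'(1)=\lambda_{2}c_{n+1}(\eta)$ as equalities, and these immediately meet the mixed equality/inequality requirements of Definition \ref{ppr2sec3de1}. It remains to establish the differential inequality $-c_{n+1}''(x)\leq\psi(x,c_{n+1},c_{n+1}')$. Using \eqref{ppr2sec3eq4} to write $-c_{n+1}''=\psi(x,c_n,c_n')+k(c_{n+1}-c_n)$, this is equivalent to
\[
\psi(x,c_n,c_n')-\psi(x,c_{n+1},c_{n+1}')\leq k\,(c_n-c_{n+1}).
\]

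The heart of the argument is bounding the left-hand side. Setting $u=c_{n+1}-c_n$ (so $u\leq 0$ by part (a)), I would insert the intermediate value $\psi(x,c_{n+1},c_n')$ and split the difference into a pure $u$-increment and a pure $u'$-increment. The $u$-increment is controlled by the one-sided Lipschitz hypothesis $[A_4]$ (applicable because $c_{n+1}\leq c_n$), giving $\psi(x,c_n,c_n')-\psi(x,c_{n+1},c_n')\leq L_1(c_n-c_{n+1})=-L_1u$, while the $u'$-increment is controlled by $[A_5]$, giving $\psi(x,c_{n+1},c_n')-\psi(x,c_{n+1},c_{n+1}')\leq L_2(x)\,|c_n'-c_{n+1}'|=L_2(x)|u'|$. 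Adding these, the required inequality reduces to $(k-L_1)u+L_2(x)|u'|\leq 0$, which is precisely the conclusion of Proposition \ref{ppr2sec3prop2} (valid since $c_n$ is a lower solution by the induction hypothesis). Hence $c_{n+1}$ is a lower solution and the induction closes.

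The step I expect to demand the most care is not any single estimate but the bookkeeping that makes the two Lipschitz bounds legitimate: one must know that the pairs $(x,c_n,c_n')$ and $(x,c_{n+1},c_n')$ lie in the domain $E$ on which $[A_4]$-$[A_5]$ are posed. The upper bound $c_{n+1}\leq c_n\leq\cdots\leq c_0=c$ is free from the monotonicity just obtained, so the only genuine point to track is membership from below, $c_n\geq d$, which links this lemma to the companion estimates for the $d_n$ sequence; I would therefore make explicit that the induction runs inside $E$. Finally I would align the sign convention in Proposition \ref{ppr2sec3prop2}, reading the term $L_2(\mathrm{sign}(u'))u'$ as $L_2(x)|u'|$, so that the reduction above matches its statement verbatim.
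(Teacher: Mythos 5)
Your proposal is correct and follows essentially the same route as the paper's own proof: an induction in which Lemma \ref{ppr2sec3lemma6} gives $c_{n+1}\leq c_n$ once $c_n$ is known to be a lower solution, and the inductive step splits $\psi(x,c_n,c_n')-\psi(x,c_{n+1},c_{n+1}')$ via $[A_4]$ and $[A_5]$ and closes with Proposition \ref{ppr2sec3prop2}, exactly as in the paper (modulo your index shift $n\mapsto n+1$). Your added remark about verifying that the iterates stay in $E$ (i.e.\ $c_n\geq d$, which ties in to Proposition \ref{ppr2subsec3prop3}) is a point the paper itself leaves implicit, and is a sensible refinement rather than a deviation.
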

\begin{proof}
By using recurrence we prove monotonicity of $c_n(x)$.\\
Step 1: If $n=0$, $ c_0(x)=c(x) $, where $c(x)$ is lower solution of \eqref{ppr2sec2eq1}-\eqref{ppr2sec2eq2}, therefore by lemma \ref{ppr2sec3lemma6} we have $c_1\leq c_0.$\\
Step 2: Suppose for $ n-1$, $c_{n-1}(x)$ is lower solution of \eqref{ppr2sec2eq1}-\eqref{ppr2sec2eq2} and $c_{n}\leq c_{n-1}.$\\
By definition of lower solutions,
 \begin{eqnarray*}
&&c_{n-1}''(x)+\psi(x,c_{n-1},c_{n-1}')\geq 0 , \quad x\in [0,1],\\
&&c_{n-1}'(0)= \lambda_{1}c_{n-1}(\xi), \quad c_{n-1}'(1)\leq \lambda_{2} c_{n-1}(\eta).
\end{eqnarray*}
To show that $c_n$ is lower solution of \eqref{ppr2sec2eq1}-\eqref{ppr2sec2eq2} we have,
\begin{eqnarray*}
-c''_{n}-\psi(x,c_{n},c'_{n})&=&-\psi(x,c_{n},c'_{n})+\psi(x,c_{n},c'_{n})+k c_n-k c_{n-1},\\
&\leq& L_1(c_{n-1}-c_n)+L_2(x)\mid c'_{n-1}-c'_n \mid +k(c_n-c_{n-1}),\\
&\leq&(k-L_1)(c_n-c_{n-1})+L_2(x) \mid c'_{n-1}-c'_n \mid.
\end{eqnarray*}
Let $u= c_{n}-c_{n-1} $. By using proposition  \ref{ppr2sec3prop2} we arrive at,
\begin{eqnarray*}
-c''_{n}-\psi(x,c_{n},c'_{n})\leq0.
\end{eqnarray*}
This proves that $c_n$ is lower solution of \eqref{ppr2sec2eq1}-\eqref{ppr2sec2eq2} and  therefore by  lemma \ref{ppr2sec3lemma6} $c_{n+1}\leq c_{n}.$
\end{proof}
%%%%%%%%%%%%%%%%%%%%%%%%%%%%%%%
\begin{lemma}
\label{ppr2sec3lemma8} If $d_n(x)$ is upper solution of \eqref{ppr2sec2eq1}-\eqref{ppr2sec2eq2}. Then $d_n(x)\leq c_{n+1}(x)$, $\forall x\in [0,1]$, where $d_{n+1}(x)$ is defined by equation \eqref{ppr2sec3eq6}-\eqref{ppr2sec3eq7}.
\end{lemma}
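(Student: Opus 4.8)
The plan is to mirror the proof of Lemma \ref{ppr2sec3lemma6}, replacing the lower solution $c_n$ by the upper solution $d_n$ and carefully tracking the resulting reversal of inequality signs; the intended conclusion is $d_n(x)\leq d_{n+1}(x)$, where $d_{n+1}$ solves the linear iterate \eqref{ppr2sec3eq6}-\eqref{ppr2sec3eq7}. The key object is the difference $u(x)=d_{n+1}(x)-d_n(x)$, and my goal is to show $u\geq 0$ on $[0,1]$ by reducing to the anti-maximum principle, Proposition \ref{ppr2sec3prop1}.

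First I would apply the differential operator to $u$. Subtracting $d_n''$ and using the defining relation \eqref{ppr2sec3eq6} together with the upper-solution inequality from Definition \ref{ppr2sec3de2} (namely $-d_n''\geq \psi(x,d_n,d_n')$), I expect
\[
-u''(x)-k\,u(x)=d_n''(x)+\psi(x,d_n,d_n')\leq 0,\qquad x\in(0,1).
\]
Next I would read off the boundary data. From $d_{n+1}'(0)=\lambda_1 d_{n+1}(\xi)$ and $d_n'(0)=\lambda_1 d_n(\xi)$ one obtains $u'(0)=\lambda_1 u(\xi)$, while $d_{n+1}'(1)=\lambda_2 d_{n+1}(\eta)$ combined with the upper-solution condition $d_n'(1)\geq \lambda_2 d_n(\eta)$ gives $u'(1)\leq \lambda_2 u(\eta)$.

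Since Proposition \ref{ppr2sec3prop1} is stated for a \emph{non-negative} forcing term $g$ and a \emph{non-negative} boundary constant $c$, and here both emerge with the opposite sign, I would pass to $v(x)=-u(x)=d_n(x)-d_{n+1}(x)$. This function satisfies $-v''-k v=-(d_n''+\psi(x,d_n,d_n'))\geq 0$, together with $v'(0)=\lambda_1 v(\xi)$ and $v'(1)\geq \lambda_2 v(\eta)$; hence $v$ fits the template \eqref{ppr2sec2eq1}-\eqref{ppr2sec2eq2} with a non-negative $g$ and a non-negative constant $c$. Invoking Proposition \ref{ppr2sec3prop1} then yields $v(x)\leq 0$ on $[0,1]$, i.e. $d_n(x)\leq d_{n+1}(x)$, which is the asserted inequality.

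The main obstacle is purely the bookkeeping of signs: because an upper solution reverses the differential inequality relative to a lower solution, the naive difference $d_{n+1}-d_n$ produces a non-positive forcing and a non-positive boundary term, neither of which meets the hypotheses of the anti-maximum principle directly. The essential step is to recognize that negating the difference restores the sign conventions required by Proposition \ref{ppr2sec3prop1}, after which the conclusion is immediate; no estimate beyond the non-negativity of the Green's function (Lemma \ref{ppr2sec3lemma2}) and the anti-maximum principle is needed.
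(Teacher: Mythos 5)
Your proposal is correct and follows essentially the same route as the paper, whose proof of this lemma is simply ``similar to Lemma \ref{ppr2sec3lemma6}'': you form the difference with the iterate, use the upper-solution inequalities to get a signed forcing term and boundary constant, and reduce to the anti-maximum principle of Proposition \ref{ppr2sec3prop1}, with the negation $v=d_n-d_{n+1}$ being exactly the sign bookkeeping the paper leaves implicit. You also correctly read the statement's $c_{n+1}$ as a typo for $d_{n+1}$, consistent with the referenced equations \eqref{ppr2sec3eq6}--\eqref{ppr2sec3eq7}.
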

\begin{proof}
Proof is similar to lemma \ref{ppr2sec3lemma6}.
\end{proof}
\begin{lemma}
\label{ppr2sec3lemma9}Suppose $L_1\in \mathbb{R}^+$, $L_2(x):[0,1]\rightarrow \mathbb{R}^+ $ are such that $[A_1]-[A_5]$ and conditions of lemma \ref{ppr2sec3lemma4} hold then the function $d_n(x)$ given by equation \eqref{ppr2sec3eq6}-\eqref{ppr2sec3eq7} satisfy,
\begin{itemize}
\item[(a)] $d_n(x)\leq d_{n+1}(x)$,
\item[(b)] $d_n(x)$ is upper solution of \eqref{ppr2sec2eq1}-\eqref{ppr2sec2eq2}.
\end{itemize}
\end{lemma}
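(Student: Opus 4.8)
The plan is to prove both assertions simultaneously by recurrence on $n$, in exact parallel with Lemma \ref{ppr2sec3lemma7}, the only change being that every inequality and every sign is reversed because we are now in the reverse-ordered (upper solution) situation. First I would record that each iterate produced by \eqref{ppr2sec3eq6}--\eqref{ppr2sec3eq7} satisfies $d_{n+1}'(0)=\lambda_1 d_{n+1}(\xi)$ and $d_{n+1}'(1)=\lambda_2 d_{n+1}(\eta)$ with equality, so the boundary part of Definition \ref{ppr2sec3de2} holds automatically and only the differential inequality $-d_n''-\psi(x,d_n,d_n')\geq 0$ has to be verified at each stage. For the base case $n=0$ we have $d_0=d$, which is an upper solution by $[A_2]$, so Lemma \ref{ppr2sec3lemma8} immediately gives $d_0\leq d_1$, i.e.\ part (a) for $n=0$.

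For the inductive step I would assume that $d_{n-1}$ is an upper solution and that $d_{n-1}\leq d_n$, and prove that $d_n$ is again an upper solution. Substituting the scheme \eqref{ppr2sec3eq6} into $-d_n''-\psi(x,d_n,d_n')$ gives
\begin{equation*}
-d_n''-\psi(x,d_n,d_n')=\big[\psi(x,d_{n-1},d_{n-1}')-\psi(x,d_n,d_n')\big]+k(d_n-d_{n-1}).
\end{equation*}
Splitting the bracket into a pure $u$-increment and a pure $u'$-increment, bounding the first by the one-sided Lipschitz condition $[A_4]$ (using $d_{n-1}\leq d_n$) and the second by $[A_5]$, I expect the lower bound
\begin{equation*}
-d_n''-\psi(x,d_n,d_n')\geq (k-L_1)(d_n-d_{n-1})-L_2(x)\,\abs{d_n'-d_{n-1}'}.
\end{equation*}

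It then remains to show the right-hand side is non-negative, and this sign bookkeeping is the step I expect to be the main obstacle. Writing $u=d_n-d_{n-1}\geq 0$, the key observation is that $-u=d_{n-1}-d_n$ solves a problem of the form \eqref{ppr2sec2eq1}--\eqref{ppr2sec2eq2}: using the iteration together with the fact that $d_{n-1}$ is an upper solution, one checks that the associated source $-(d_{n-1}''+\psi(x,d_{n-1},d_{n-1}'))$ is non-negative, and that the boundary constant is non-negative as well since $d_{n-1}'(1)\geq\lambda_2 d_{n-1}(\eta)$. Consequently the representation-based estimate of Proposition \ref{ppr2sec3prop2}, applied to $-u$, yields $(k-L_1)(-u)+L_2(x)\abs{u'}\leq 0$, i.e.\ $(k-L_1)u-L_2(x)\abs{u'}\geq 0$, which is precisely the lower bound above; hence $-d_n''-\psi(x,d_n,d_n')\geq 0$ and $d_n$ is an upper solution. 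Lemma \ref{ppr2sec3lemma8} then delivers $d_n\leq d_{n+1}$, closing the induction and establishing both (a) and (b). The one point I would verify with care is the legitimacy of applying Proposition \ref{ppr2sec3prop2} to $-u$: although that proposition is phrased for a lower solution, its proof relies only on the non-negativity of $g$ and of the boundary data in the representation \eqref{ppr2sec3eq3} together with the sign estimates of Lemma \ref{ppr2sec3lemma4a}, which is exactly what the reverse-ordered hypotheses guarantee for $-u$.
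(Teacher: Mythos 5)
Your proof is correct and takes essentially the same route as the paper, whose proof of this lemma is simply declared ``similar to Lemma \ref{ppr2sec3lemma7}'': recurrence with base case from Lemma \ref{ppr2sec3lemma8}, the Lipschitz splitting via $[A_4]$--$[A_5]$, and the representation estimate of Proposition \ref{ppr2sec3prop2} applied to the difference of consecutive iterates. Your closing check that Proposition \ref{ppr2sec3prop2} legitimately applies to $-u$ (nonnegative source $-(d_{n-1}''+\psi(x,d_{n-1},d_{n-1}'))$ and nonnegative boundary constant) merely makes explicit a sign-reversal detail the paper leaves implicit.
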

\begin{proof}
Proof is similar to lemma \ref{ppr2sec3lemma7}.
\end{proof}
\begin{prop}
\label{ppr2subsec3prop3}Suppose $L_1\in \mathbb{R}^+$, $L_2(x):[0,1]\rightarrow \mathbb{R}^+ $ and $L_2(0)=0$ are such that $[A_1]-[A_5]$ and conditions of lemma \ref{ppr2sec3lemma4} hold and $$\psi(x,d(x),d'(x))-\psi(x,c(x),c'(x))-k(d(x)-c(x))\geq 0,$$
then $c_n\geq d_n$, $\forall x\in [0,1],$
where $c_n$ and $d_n$ are given by equation \eqref{ppr2sec3eq4}-\eqref{ppr2sec3eq5} and \eqref{ppr2sec3eq6}-\eqref{ppr2sec3eq7} respectively.
\end{prop}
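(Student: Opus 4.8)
The plan is to argue by induction on $n$, carrying along not merely the desired ordering $c_n\geq d_n$ but also the auxiliary sign condition
$$\psi(x,c_n,c_n')-\psi(x,d_n,d_n')-k(c_n-d_n)\leq 0, \qquad (\star_n)$$
which for $n=0$ is exactly the stated hypothesis $\psi(x,d,d')-\psi(x,c,c')-k(d-c)\geq 0$ together with $c_0=c,\ d_0=d$. The base case $c_0\geq d_0$ is assumption $[A_2]$, so $(\star_0)$ and $c_0\geq d_0$ both hold.

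For the inductive step I would assume $c_n\geq d_n$ and $(\star_n)$ and set $w=c_{n+1}-d_{n+1}$. Subtracting the defining schemes \eqref{ppr2sec3eq4}--\eqref{ppr2sec3eq5} and \eqref{ppr2sec3eq6}--\eqref{ppr2sec3eq7} gives
$$-w''-kw=\psi(x,c_n,c_n')-\psi(x,d_n,d_n')-k(c_n-d_n), \quad w'(0)=\lambda_1 w(\xi),\ w'(1)=\lambda_2 w(\eta).$$
Writing $g:=\psi(x,c_n,c_n')-\psi(x,d_n,d_n')-k(c_n-d_n)$, condition $(\star_n)$ says $g\leq 0$. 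First I would apply the anti-maximum principle, Proposition \ref{ppr2sec3prop1}, to $-w$ (whose source term $-g\geq 0$ and whose boundary constant is $c=0$) to conclude $-w\leq 0$, i.e. $c_{n+1}\geq d_{n+1}$.

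It then remains to re-establish $(\star_{n+1})$. Using $c_{n+1}\geq d_{n+1}$ I would split
$$\psi(x,c_{n+1},c_{n+1}')-\psi(x,d_{n+1},d_{n+1}')=\big[\psi(x,c_{n+1},c_{n+1}')-\psi(x,d_{n+1},c_{n+1}')\big]+\big[\psi(x,d_{n+1},c_{n+1}')-\psi(x,d_{n+1},d_{n+1}')\big]$$
and bound the two brackets by the one-sided Lipschitz condition $[A_4]$ and the Lipschitz condition $[A_5]$, obtaining
$$\psi(x,c_{n+1},c_{n+1}')-\psi(x,d_{n+1},d_{n+1}')-k(c_{n+1}-d_{n+1})\leq -(k-L_1)w+L_2(x)|w'|.$$
Hence $(\star_{n+1})$ follows once the differential inequality $(k-L_1)w-L_2(x)|w'|\geq 0$ is shown. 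To prove it I would represent $w$ through Lemma \ref{ppr2sec3lemma3} with $c=0$, namely $w=-\int_0^1 G(x,s)g(s)\,ds$ and $w'=-\int_0^1 \tfrac{\partial G(x,s)}{\partial x}g(s)\,ds$, and combine them as $(L_1-k)w\pm L_2(x)w'=-\int_0^1\big[(L_1-k)G(x,s)\pm L_2(x)\tfrac{\partial G(x,s)}{\partial x}\big]g(s)\,ds$. By part (b) of Lemma \ref{ppr2sec3lemma4a} the bracket is $\leq 0$ for either sign, while $g\leq 0$, so each integral is $\leq 0$; this yields $(k-L_1)w\geq \pm L_2(x)w'$ pointwise, hence $(k-L_1)w-L_2(x)|w'|\geq 0$. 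This is the mirror image of Proposition \ref{ppr2sec3prop2}, the sign being reversed precisely because here the source term $g$ is non-positive rather than non-negative.

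The main obstacle is this last differential inequality: everything hinges on the sign information packaged in Lemma \ref{ppr2sec3lemma4a}(b), whose validity for \emph{both} choices of sign is exactly what lets the term $L_2(x)|w'|$ be absorbed. The conceptual key is to maintain the stronger invariant $(\star_n)$ rather than the bare ordering $c_n\geq d_n$, since it is $(\star_n)$ that both drives the anti-maximum principle at stage $n$ and supplies the non-positive source term needed to run Lemma \ref{ppr2sec3lemma4a} when closing the induction at stage $n+1$.
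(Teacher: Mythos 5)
Your proof is correct and follows essentially the same route as the paper's: both arguments strengthen the induction with a sign invariant on the source term $g_n=\psi(x,d_n,d_n')-\psi(x,c_n,c_n')-k(d_n-c_n)\geq 0$ (your $(\star_n)$ with signs flipped), apply the anti-maximum principle of Proposition \ref{ppr2sec3prop1} to the difference $d_{n+1}-c_{n+1}$, and propagate the invariant via $[A_4]$, $[A_5]$ and the sign information in Lemma \ref{ppr2sec3lemma4a}(b). Your explicit re-derivation of $(k-L_1)w-L_2(x)\abs{w'}\geq 0$ from the Green's function representation is exactly the content of Proposition \ref{ppr2sec3prop2}, which is what the paper invokes at the corresponding step of its induction.
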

\begin{proof}
Define, $ g_i (x) = \psi(x, d _i, b ' _i) - \psi(x, c _i, c ' _i)- k (d _i - c _i),~\forall i \in \mathbb{N}$\\
and let $ u_i = d _i - c _i$ which satisfies,
\begin{eqnarray*}
&&-u''_i - k u_i = \psi(x, d_{i-1}, d'_{i-1}) - \psi(x, c _{i-1}, c' _{i-1}) - k ( d_{i-1} -c _{i-1} )= g_{i-1} (x),\\
&& ( d _i -c _i )'(0)=\lambda_1(d _i -c _i)(\xi), \quad ( d _i -c_i )'(1)=\lambda_2(d _i -c_i)(\eta).
\end{eqnarray*}
\emph{Claim 1.} $c _1 \geq d _1.$ For $i = 1$, we have
\begin{eqnarray*}
&&-u_1'' - k u_1 = g_0(x) \geq 0\\
&&(d_1 - c_1)'(0) =\lambda_1(d _1 -c _1)(\xi), \quad ( d _1 -c _1 )'(1)\geq\lambda_2(d _1 -c _1)(\eta).
\end{eqnarray*}
Therefore $u_1$ is a solution of eqs \eqref{ppr2sec2eq1}-\eqref{ppr2sec2eq2}  with $g(x) = g_0(x)$. Hence by Proposition \ref{ppr2sec3prop1},
$c_1 \geq d_1 $.\\

\noindent \emph{Claim 2.} Suppose $g_{n-2} \geq 0$ and $c_{n-1} \geq d_{n-1}$.\\
 Now ,
\begin{eqnarray*}
g_{n-1} (x)&=& \psi(x, d _{n-1}, d ' _{n-1}) - \psi(x, c _{n-1}, c ' _{n-1})- k (c _{n-1} - c _{n-1})\\
&\geq& -(k - L_1)(d _{n-1} - c _{n-1})-L_2(x)|c' _{n-1}-d'_{n-1}|\\
&\geq& -[(k - L_1) u_{n-1} + L_2(x)(\text{sign} u'_{n-1}) u'_{n-1}].
\end{eqnarray*}
With the help of Proposition \ref{ppr2sec3prop2} we can prove that,
\begin{eqnarray*}
&& (k - L_1) u_{n-1} + L_2(x)(sign~u'_{n-1}) u'_{n-1}\leq0.
\end{eqnarray*}
Therefore $g_{n-1}\geq0$, also we have $u_n = d_{n} - c_{n}$ for $i=n$. Then $u_n$ satisfy,
\begin{eqnarray*}
&&-u''_n - k u_n =g_{n-1}(x)\geq0.\\
&&(d_{n} - c_{n})'(0) = \lambda_1(d _n -c _n)(\xi), \quad ( d _n -c _n )'(1)\geq\lambda_2(d _n -c _n)(\eta).
\end{eqnarray*}
We deduce from Proposition \ref{ppr2sec3prop1} that $d_{n} \leq c_{n}$.
\end{proof}
\subsection{Bound on Derivative of Solution} \label{ppr2sec3.4}
$[A_6]:$ \textbf{Nagumo Condition:} Let $ \mid \psi(x,v,w)\mid\leq\phi(\mid w \mid);~ \forall (x,v,w) \in E,$ where  $ \phi:\mathbb{R}^{+}\rightarrow \mathbb{R}^{+} $ is continuous which satisfiy,
$$\int_{\gamma}^{\infty}\frac{sds}{\phi(s)} \geq \max_{x\in [0,1]} c(x) - \min_{x\in [0,1]} d(x),$$
such that $ \gamma=2\displaystyle\sup_{x\in [0,1]}\mid c(x)\mid$.
\begin{lemma}
\label{ppr2sec3lemma10} Let $ [A_6] $  be true then there exist $ P>0 $ such that $ \parallel u'\parallel_\infty\leq P$, $\forall x\in [0,1],$
where $ u$ is any solution of inequality
\begin{eqnarray}
\label{ppr2sec3eq8}&&-u''(x)\geq \psi(x,u,u') ,\quad x\in I,\\
\label{ppr2sec3eq9}&&u'(0)= \lambda_1 u(\xi),\quad u'(1)\geq \lambda_1 u(\eta),
\end{eqnarray}
such that $ d(x)\leq u(x)\leq c(x).$
\end{lemma}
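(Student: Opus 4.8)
The plan is to adapt the classical Nagumo derivative estimate to the present inequality \eqref{ppr2sec3eq8}--\eqref{ppr2sec3eq9}. The starting observation is that the constraint $d(x)\le u(x)\le c(x)$ already confines $u$ to a bounded strip, so $u$ is uniformly bounded and, more importantly, its oscillation is controlled: $u(x)-u(y)\le \max_{[0,1]}c-\min_{[0,1]}d=:N$ for all $x,y\in[0,1]$, where $N$ is exactly the right-hand side of the integral in $[A_6]$. Moreover, since $(x,u(x),u'(x))\in E$ for every $x$, the Nagumo growth bound gives $|\psi(x,u,u')|\le \phi(|u'|)$ pointwise. Combining this with $-u''\ge\psi$ yields the one-sided second-order estimate $u''(x)\le \phi(|u'(x)|)$ on $[0,1]$, which is the only differential information I will use.

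Next I would produce an \emph{anchor point} at which the derivative is small. By the mean value theorem there is $\tau\in(0,1)$ with $u'(\tau)=u(1)-u(0)$, so $|u'(\tau)|\le N$; the boundary condition $u'(0)=\lambda_1 u(\xi)$ furnishes a second candidate anchor at the left endpoint. Using the confinement $d\le u\le c$ together with the specific choice $\gamma=2\sup_{[0,1]}|c|$, one checks that the anchor value does not exceed $\gamma$, so the Nagumo integration may legitimately be started at the level $s=\gamma$. The heart of the argument is then the change of variables $s=|u'(x)|$: fixing the point $x^*$ where $|u'|$ is maximal, with value $P$, and taking a maximal sub-interval $J$ joining the anchor to $x^*$ on which $u'$ keeps a fixed sign with $|u'|\ge\gamma$, one computes $\tfrac{d}{dx}\int_{0}^{|u'(x)|}\tfrac{s\,ds}{\phi(s)}=\tfrac{|u'|}{\phi(|u'|)}(\operatorname{sign}u')\,u''$, and the one-sided bound $u''\le\phi(|u'|)$ makes the integral of this expression over $J$ dominated by $\int_J|u'|\le N$. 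This produces
\begin{eqnarray*}
\int_{\gamma}^{P}\frac{s\,ds}{\phi(s)}\le N.
\end{eqnarray*}
Comparing with the hypothesis $\int_{\gamma}^{\infty}\tfrac{s\,ds}{\phi(s)}\ge N$ of $[A_6]$ forces $P$ to be no larger than the number $P_0$ determined implicitly by $\int_{\gamma}^{P_0}\tfrac{s\,ds}{\phi(s)}=N$, a constant depending only on $\phi$, $\gamma$ and $N$ and not on the particular $u$; taking $P=P_0$ gives $\|u'\|_\infty\le P$.

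The main obstacle I anticipate is precisely the \emph{one-sidedness} of the differential inequality: from $-u''\ge\psi$ I control $u''$ only from above, so the Nagumo integration closes cleanly only on sub-intervals where the sign of $u'$ and the side of the anchor are compatible (namely, $u'>0$ to the right of the anchor, or $u'<0$ to its left). The delicate point is therefore the bookkeeping that guarantees a sub-interval $J$ joining the anchor to the extremum always exists in an admissible direction. To handle this I would exploit the left boundary condition $u'(0)=\lambda_1 u(\xi)$ to place an anchor at $x=0$, supplement it when necessary with the mean-value anchor $\tau$, and estimate $\sup u'$ and $-\inf u'$ separately, so that in each case the integration runs in the direction in which the available one-sided control on $u''$ is effective.
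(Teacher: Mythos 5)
Your proposal is correct and takes essentially the same route as the paper's proof: an anchor point where $|u'|\leq\gamma$ (from the mean value theorem, a boundary condition, or a zero of $u'$), the Nagumo change of variables $s=|u'|$ integrated toward the point where $|u'|$ is extremal, and comparison of $\int_{\gamma}^{P}\frac{s\,ds}{\phi(s)}\leq \max c-\min d$ with $[A_6]$ to extract $P$. Your explicit sign-and-direction bookkeeping for the one-sided inequality is exactly what the paper implements through its case split (monotone increasing, monotone decreasing, and the non-monotone subcases anchored at zeros of $u'$, integrating rightward where $u'>0$ and leftward where $u'<0$), so the two arguments coincide in substance.
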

\begin{proof}
Proof is divided into three cases.

\item[Case-1:] If $u(x)$ is monotonically increasing in $(0,1)$:
By Mean value theorem there exist $\alpha \in (0,1)$ such that
\begin{eqnarray*}
&& u'(\alpha)= u(1)-u(0),\\
\Rightarrow && |u'(\alpha)|\leq 2\sup_{x\in [0,1]} |u(x)|\leq 2\sup_{x\in [0,1]} |c(x)|\leq \gamma,~ \text{where}~ \gamma= 2 \sup_{x\in [0,1]} |c(x)|.
\end{eqnarray*}
Using $ \mid \psi(x,v,w)\mid\leq\phi(\mid w \mid)$ in equation \eqref{ppr2sec3eq8} and integrating from limit $\alpha$ to $x$ the equation becomes,
\begin{eqnarray*}
&&\int_{\alpha}^{x}\frac{u''(x)u'(x)}{\phi (|u'(x)|)}dx\leq \int_{\alpha}^{x}u'(x) dx\leq \max_{x\in I} c(x)- \min_{x\in I
} d(x).\\
\end{eqnarray*}
Let $u'(x)=s(>0),$  since $|u'(\alpha)|\leq \gamma$ then,
\begin{eqnarray*}
&&\int_{\gamma}^{u'(x)}\frac{s ds}{\phi (s)}\leq \int_{u'(\alpha)}^{u'(x)}\frac{s ds}{\phi (s)}\leq \max_{x\in I} c(x)- \min_{x\in I
} d(x).\\
\end{eqnarray*}
Adding $\int_{0}^{\gamma}\frac{s ds}{\phi (s)}$ in both side of above equation and applying condition $ [A_6] $ we obtain,  $$ \parallel u'\parallel_\infty\leq P ~~\forall x\in [0,1].$$
\item[Case-2:] If $u(x)$ is monotonically decreasing in $(0,1)$: Proof of this case is similar to case-1.
\item[Case-3:] If $u(x)$ is neither monotonically decreasing nor monotonically increasing in $[0,1]$:
Proof of this case is divided into two sub-cases.
\item[Subcase-1:] Consider the interval $(x_0,x]\subset (0,1)$ such that $u'(x_0)=0$ and $u'(x)>0$ for $x>x_0.$
Using $ \mid \psi(x,v,w)\mid\leq\phi(\mid w \mid)$ in equation \eqref{ppr2sec3eq8} and integrating from $x_0$ to $x$ the equation becomes,
\begin{eqnarray*}
&&\int_{x_0}^{x}\frac{u''(x)u'(x)}{\phi (|u'(x)|)}dx\leq \int_{x_0}^{x}u'(x) dx\leq \max_{x\in I} c(x)- \min_{x\in I
} d(x).\\
\end{eqnarray*}
Let $u'(x)=s(>0),$ choose $P>0$ and using condition $ [A_6] $ we obtain,
\begin{eqnarray*}
&&\int_{0}^{u'(x)}\frac{sds}{\phi (s)}\leq \max_{x\in I} c(x)- \min_{x\in I
} d(x)\leq \int_{\gamma}^{P}\frac{sds}{\phi (s)}\le\int_{0}^{P}\frac{sds}{\phi (s)}\\
\end{eqnarray*}
$$\Rightarrow   \parallel u'\parallel_\infty\leq P ~~\forall x\in [0,1].$$
\item[Subcase-2:] Consider the interval $[x,x_0)\subset (0,1)$ such that  $u'(x)<0$ for $x<x_0.$ \\
\indent Proof is similar to Subcase-1.
\end{proof}
\begin{lemma}
\label{ppr2sec3lemma11}  Let $ [A_6] $  be true then there exist $ P>0 $ such that $ \parallel u'\parallel_\infty\leq P$, $\forall x\in [0,1]$, where $ u$ is any solution of inequality
\begin{eqnarray}
\label{ppr2sec3eq10}&&-u''(x)\leq \psi(x,u,u') ,\quad x\in I,\\
\label{ppr2sec3eq11}&&u'(0)= \lambda_1 u(\xi),\quad u'(1)\leq \lambda_1 u(\eta),
\end{eqnarray}
such that $ d(x)\leq u(x)\leq c(x).$
\end{lemma}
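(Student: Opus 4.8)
The plan is to follow the proof of Lemma \ref{ppr2sec3lemma10} almost verbatim, reversing only the sense of the differential inequality. Under $[A_6]$ the Nagumo bound $\mid\psi\mid\leq\phi(\mid u'\mid)$ holds, so from $-u''\leq\psi$ I now obtain the pointwise estimate $u''\geq-\phi(\mid u'\mid)$, the mirror image of the estimate $u''\leq\phi(\mid u'\mid)$ exploited in Lemma \ref{ppr2sec3lemma10}. Since $[A_6]$ controls $\mid\psi\mid$ (hence both $\psi$ and $-\psi$) by the same majorant $\phi$, and neither $\gamma=2\sup_{x\in I}\mid c(x)\mid$ nor the integral condition $\int_{\gamma}^{\infty}\frac{s\,ds}{\phi(s)}\geq\max_{x\in I} c(x)-\min_{x\in I} d(x)$ is affected, the admissible constant $P$ produced here is exactly the one already furnished by Lemma \ref{ppr2sec3lemma10}.

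First I would split according to the monotonicity of $u$ on $(0,1)$, as in Lemma \ref{ppr2sec3lemma10}. When $u$ is monotone, the mean value theorem gives $\alpha\in(0,1)$ with $u'(\alpha)=u(1)-u(0)$ and hence $\mid u'(\alpha)\mid\leq 2\sup_{x\in I}\mid u(x)\mid\leq 2\sup_{x\in I}\mid c(x)\mid=\gamma$, using $d(x)\leq u(x)\leq c(x)$. Multiplying the reversed estimate $u''\geq-\phi(\mid u'\mid)$ by $u'$, dividing by $\phi(\mid u'\mid)>0$, and integrating between $\alpha$ and the point of interest, the substitution $s=u'(x)$ converts the left-hand side into $\int\frac{s\,ds}{\phi(s)}$ while the right-hand side is controlled by $\max_{x\in I} c(x)-\min_{x\in I} d(x)$. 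Adding $\int_{0}^{\gamma}\frac{s\,ds}{\phi(s)}$ and invoking $[A_6]$ keeps $\int_{0}^{\mid u'(x)\mid}\frac{s\,ds}{\phi(s)}$ below a fixed constant, whence $\parallel u'\parallel_\infty\leq P$. When $u$ is neither increasing nor decreasing, I would localise exactly as in the two subcases of Lemma \ref{ppr2sec3lemma10}, choosing on each maximal interval of constant sign of $u'$ a point $x_0$ with $u'(x_0)=0$ as the base point of integration, so that the lower limit becomes $0$ and $[A_6]$ again yields $\mid u'(x)\mid\leq P$; covering $[0,1]$ by finitely many such intervals then gives the uniform bound.

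The only point requiring care — and it is bookkeeping rather than a genuine obstacle — is the orientation of the inequalities after multiplying through by $u'$, which flips them wherever $u'<0$, together with the corresponding choice of the endpoint from which one integrates, so that the potentially large value $\mid u'(x)\mid$ always sits as the outer limit of $\int\frac{s\,ds}{\phi(s)}$. Because the hypothesis here carries the opposite sign to that of Lemma \ref{ppr2sec3lemma10}, these choices are simply made in the mirror-image way. Once they are fixed, the divergence-type condition $[A_6]$ closes the estimate identically to the previous lemma, and the same $P$ serves for every $u$ satisfying \eqref{ppr2sec3eq10}--\eqref{ppr2sec3eq11} with $d(x)\leq u(x)\leq c(x)$.
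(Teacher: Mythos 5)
Your proposal coincides with the paper's own treatment: the paper disposes of Lemma \ref{ppr2sec3lemma11} with the single line ``Proof is similar to lemma \ref{ppr2sec3lemma10}'', and your mirrored Nagumo argument --- replacing $u''\leq\phi(\mid u'\mid)$ by $u''\geq-\phi(\mid u'\mid)$, keeping the same $\gamma$ and $P$, and handling the sign flip under multiplication by $u'$ together with the mirror-image choice of the base point of integration --- is exactly the intended adaptation. It is correct at the same level of rigor as the paper's proof of Lemma \ref{ppr2sec3lemma10} itself, so nothing further is needed.
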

\begin{proof}
Proof is similar to lemma \ref{ppr2sec3lemma10}.
\end{proof}
\begin{theorem}
\label{ppr2sec3subsec3the1} Suppose $L_1\in \mathbb{R}^+$, $L_2(x):[0,1]\rightarrow \mathbb{R}^+ $ and $L_2(0)=0$ are such that $[A_1]-[A_5]$ and conditions of lemma \ref{ppr2sec3lemma4} hold and $\forall x\in [0,1]$
$$\psi(x,d(x),d'(x)-\psi(x,c(x),c'(x))-k(d-c)\geq0,$$
then $ (c_n)_n \rightarrow y$ and $ (d_n)_n\rightarrow z$  uniformly in $C^1[0,1]$ such that
 $d\leq y\leq z\leq c,$
where $y(x)$ and $z(x)$ are solutions of \eqref{ppr2sec2eq1} and \eqref{ppr2sec2eq2}.
\end{theorem}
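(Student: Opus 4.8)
The plan is to combine the monotonicity already in hand with the Nagumo-type derivative bound to promote pointwise convergence to convergence in $C^1$, and then to pass to the limit in the Green's function representation of the iterates.

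First I would fix the pointwise limits. By Lemma \ref{ppr2sec3lemma7} the sequence $(c_n)$ is monotone and each $c_n$ is a lower solution, by Lemma \ref{ppr2sec3lemma9} the sequence $(d_n)$ is monotone and each $d_n$ is an upper solution, and by Proposition \ref{ppr2subsec3prop3} together with $[A_2]$ every iterate is trapped in the order interval $[d,c]$. Since $c,d\in C^2(I)$ are bounded on the compact interval, both sequences are monotone and uniformly bounded, so the monotone convergence theorem gives pointwise limits $y$ and $z$, and the same chain of trapping inequalities orders the limits as $d\leq y\leq z\leq c$.

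Next I would upgrade this to convergence in $C^1[0,1]$. Since $d\leq c_n\leq c$ and each $c_n$ is a lower solution, Lemma \ref{ppr2sec3lemma11} together with the Nagumo condition $[A_6]$ yields a bound $\|c_n'\|_\infty\leq P$ independent of $n$, and Lemma \ref{ppr2sec3lemma10} gives the same for the upper solutions $d_n$. Feeding these bounds and $d\leq c_n\leq c$ back into \eqref{ppr2sec3eq4} and using continuity of $\psi$ on the compact set $\{(x,v,w):d(x)\leq v\leq c(x),\ |w|\leq P\}$ shows that $\|c_n''\|_\infty$ is also uniformly bounded, so that $\{c_n\}$ and $\{c_n'\}$ are both equibounded and equicontinuous. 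By Arzel\`a--Ascoli every subsequence admits a $C^1$-convergent sub-subsequence, whose limit must coincide with the monotone pointwise limit $y$ by uniqueness of the pointwise limit; hence $y\in C^1[0,1]$ and the whole sequence satisfies $c_n\to y$ in $C^1[0,1]$. The identical argument gives $d_n\to z$ in $C^1[0,1]$.

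Then I would identify the limits as solutions. Using Lemma \ref{ppr2sec3lemma3} with vanishing boundary constant, each iterate satisfies $c_{n+1}(x)=-\int_{0}^{1}G(x,s)\big(\psi(s,c_n(s),c_n'(s))-k c_n(s)\big)\,ds$, and differentiating under the integral gives $c_{n+1}'(x)=-\int_{0}^{1}\frac{\partial G(x,s)}{\partial x}\big(\psi(s,c_n,c_n')-k c_n\big)\,ds$. Because $c_n\to y$ in $C^1$ and $\psi$ is continuous, the integrands converge uniformly, so passing to the limit yields $y(x)=-\int_{0}^{1}G(x,s)\big(\psi(s,y,y')-k y\big)\,ds$; by Lemma \ref{ppr2sec3lemma3} this is exactly the solution of $-y''-ky=\psi(x,y,y')-ky$ with $y'(0)=\lambda_1 y(\xi)$ and $y'(1)=\lambda_2 y(\eta)$, i.e.\ $y$ solves the nonlinear problem \eqref{ppr2sec1eq1}--\eqref{ppr2sec1eq2}, and the same passage to the limit shows that $z$ does as well. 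The hard part will be the $C^1$ step rather than this limit passage: the real work is turning the scalar derivative bound from $[A_6]$ into genuine equicontinuity of $\{c_n'\}$ and $\{d_n'\}$ and matching the Arzel\`a--Ascoli limit with the monotone pointwise limit, so that the derivatives (and not merely the function values) converge uniformly; once this is secured, the ordering $d\leq y\leq z\leq c$ follows directly from the trapping inequalities of Lemmas \ref{ppr2sec3lemma7}, \ref{ppr2sec3lemma9} and Proposition \ref{ppr2subsec3prop3}.
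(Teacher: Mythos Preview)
Your proposal is correct and follows essentially the same route as the paper: pointwise monotone limits from Lemmas \ref{ppr2sec3lemma7}, \ref{ppr2sec3lemma9} and Proposition \ref{ppr2subsec3prop3}; a uniform $C^1$ bound via the Nagumo condition $[A_6]$ and Lemmas \ref{ppr2sec3lemma10}--\ref{ppr2sec3lemma11}, upgraded to equicontinuity of $(c_n')$ through the uniform bound on $c_n''$ coming from \eqref{ppr2sec3eq4}; Arzel\`a--Ascoli plus the sub-subsequence argument; and passage to the limit in the Green's function representation of Lemma \ref{ppr2sec3lemma3}. The only cosmetic difference is that the paper invokes Lebesgue dominated convergence for the final step, whereas you use uniform convergence of the integrand (which your $C^1$ convergence already supplies).
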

\begin{proof}
We have already proved that the sequences $(c_n)_n $ and $ (d_n)_n $ are such that,
\begin{eqnarray}
\label{ppr2sec3eq12}c=c_0\geq c_1...\geq c_n...\geq d_n\geq...\geq d_1\geq d_0 = d.
\end{eqnarray}
 Now we prove that the sequences $(c_n)_n $ and $ (d_n)_n $ converges uniformly in $ C^{1}[0,1] $ to solutions $ y $ and $ z $ of non linear BVP \eqref{ppr2sec2eq1}-\eqref{ppr2sec2eq1} such that $\forall x\in I $
 $$ d\leq y\leq z\leq c. $$
 Firstly, we prove that $(c_n)_n $ and $ (d_n)_n $ converges  in $ C^{1}([0,1]). $

 Since $(c_n)_n $ and $ (d_n)_n $ are bounded as well as monotonic therefore by monotone convergence theorem $(c_n)_n $ and $ (d_n)_n $ are convergent point wise. Let $ \lim_{n\rightarrow \infty} c_n(x) = y(x) $ and $ \lim_{n\rightarrow \infty}{ d_n(x)} = z(x).$ From equation \eqref{ppr2sec3eq12} and Lemma \ref{ppr2sec3lemma11} it can be deduced that $(c_n)_n $ is uniformly bounded and equicontinuous in  $ C^{2}[0,1].$
 i.e $\forall~ \epsilon>0$  $\exists$ $\delta>0$ such that $ \forall~  n$
 $$ |(c_n)(x)-(c_n)(y)| <\epsilon, \text{if}\quad  |x-y|< \delta.$$
  Therefore every sub-sequence $(c_{n_i})_i$ of $(c_n)_n $ is equibounded and equicontinuous in  $ C^{2}[0,1].$ We know  by Arzela-Ascoli theorem that there exist a sub-subsequence $(c_{n_{i_j}})_j$ of sub-sequence $(c_{n_i})_i$ which converges in $ C^{2}[0,1] .$
Since convergent sequences have unique limit point,
hence  $c_n(x)\rightarrow y(x) $ uniformly in $ C^{2}[0,1].$ Similarly, it can also be shown that $(d_n)_n(x)\rightarrow z(x) $ uniformly in $ C^{2}[0,1].$ Also in similar way by using Lemma \ref{ppr2sec3lemma10}-\ref{ppr2sec3lemma11}, equations \eqref{ppr2sec3eq4}-\eqref{ppr2sec3eq7} and Arzela-Ascoli theorem, we can prove that $c'_n(x)$ and $d'_n(x)$ are uniformly convergent and converges to $y'(x)$ and $z'(x)$ respectively.

We finally prove that $ y(x) $ and  $ z(x) $ are solutions of \eqref{ppr2sec2eq1}-\eqref{ppr2sec2eq1}. Since equations \eqref{ppr2sec3eq4}-\eqref{ppr2sec3eq5} and \eqref{ppr2sec3eq6}-\eqref{ppr2sec3eq7} are in the form of equations \eqref{ppr2sec3eq1}-\eqref{ppr2sec3eq2}, so the solution of these equations can be expressed as the form of equation \eqref{ppr2sec3eq3} for $(c_n)_n $ and $(d_n)_n $. After taking limit $n\rightarrow\infty$, and using Lebesgue dominated convergence theorem we can easily conclude that $ y(x) $ and  $ z(x) $ are the solutions of nonlinear BVP \eqref{ppr2sec1eq1}-\eqref{ppr2sec1eq2}. Hence the proof.
\end{proof}
 \section{Numerical Illustration} \label{P2sec4}
In this section for reverse order case we have considered an example. This example gives uniformly convergent sequences of upper and lower solutions which converges to the solution of our nonlinear problems for the specific range of $\frac{\partial \psi}{\partial u}  $.
\subsection{Example}
Consider four point BVPs,
\begin{eqnarray}
\label{p2sec4eq1}&&-u''(x)=\dfrac{e^u-x e^{u'}}{195},\\
\label{p2sec4eq2}&&u'(0)=2u(0.1),\quad u'(1)=3u(0.2),
\end{eqnarray}
where $ \psi(x,u,u')= \dfrac{e^u-x e^{u'}}{195}$, $ \xi=0.1,~ \eta=0.2,~\lambda_1=2 $ and $ \lambda_2=3. $ We consider initial lower and upper solutions as $ c(x)=1+2.525x+x^2,~d(x)=-(1+2.525x+x^2)$ respectively, where $ d(x) \leq c(x)$. Since $ \psi(x,u,u') $ is one sided Lipschitz in $ u $ so lipschitz constant is $ L_1=0.47331$ which is obtained by using $[A_4]$. Also $\psi$ is Lipschitz in $ u' $ therefore we derive from $[A_5]$, $ L_2(x)=\dfrac{x e^P}{195}$, where $ P>0 $ such that  $ \parallel u'\parallel_\infty\leq P, ~~\forall x\in [0,1]$. We obtain $ k\geq 0.4811 $ and from $[A_6]$, $ \phi(|s|)=\dfrac{e^{4.525}+e^{\mid s \mid}}{195} $,  $ P=0.2154 $. The range for $ k $ is computed by using these above results and Mathematica-11.3. From figure 1-4 it can be observed that there exist $(\alpha, \beta)\subset (0,\frac{\pi^2}{4})$ in which all the inequalities are satisfied and for this range the sequences are convergent which are shown in figure 5.\\

\textbf{Remark 1:} In figure 6, 7 we assume $k=1$, $2.3$ for which inequality shown in figure 3 is not valid but we are getting monotonic sequences.
\begin{figure}[H]
   \begin{minipage}{0.48\textwidth}
     \centering
     \includegraphics[width=.7\linewidth]{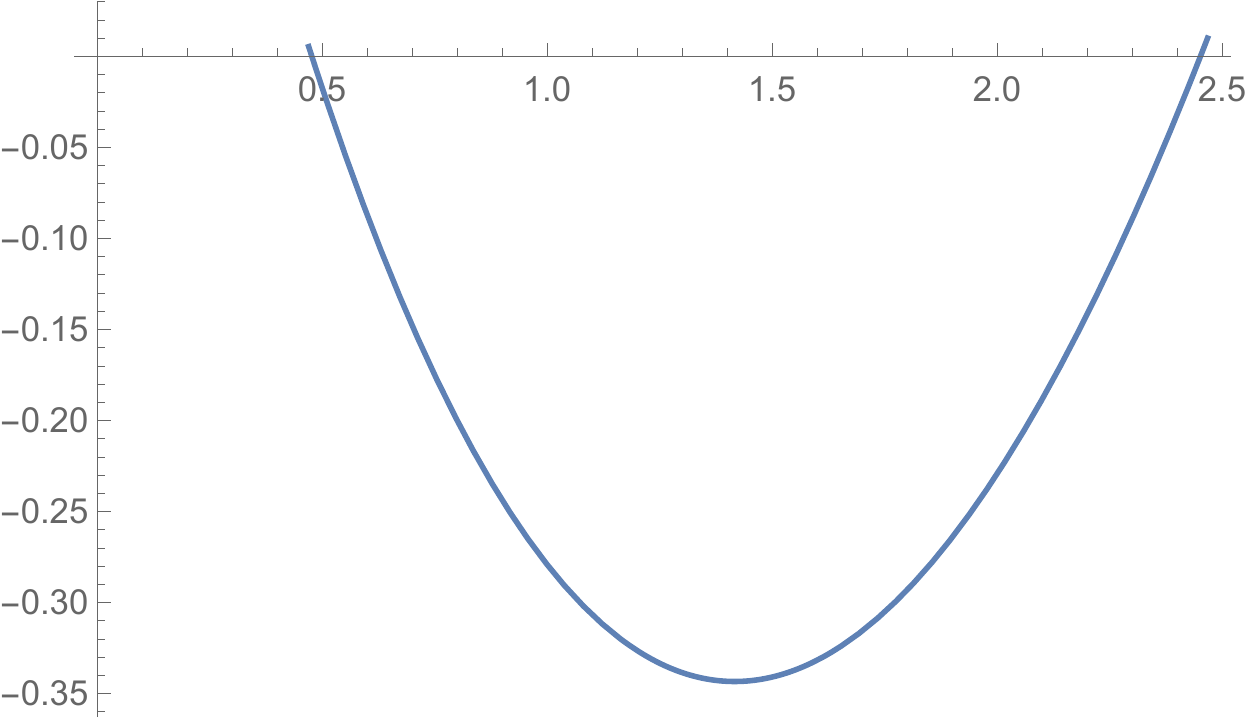}
     \caption{$(L_1-k)\cos\sqrt{k}+L_2(x)\sqrt{k}\sin\sqrt{k}$}\label{P2_Non_well_fig1}
   \end{minipage}\hfill
   \begin{minipage}{0.48\textwidth}
     \centering
     \includegraphics[width=.7\linewidth]{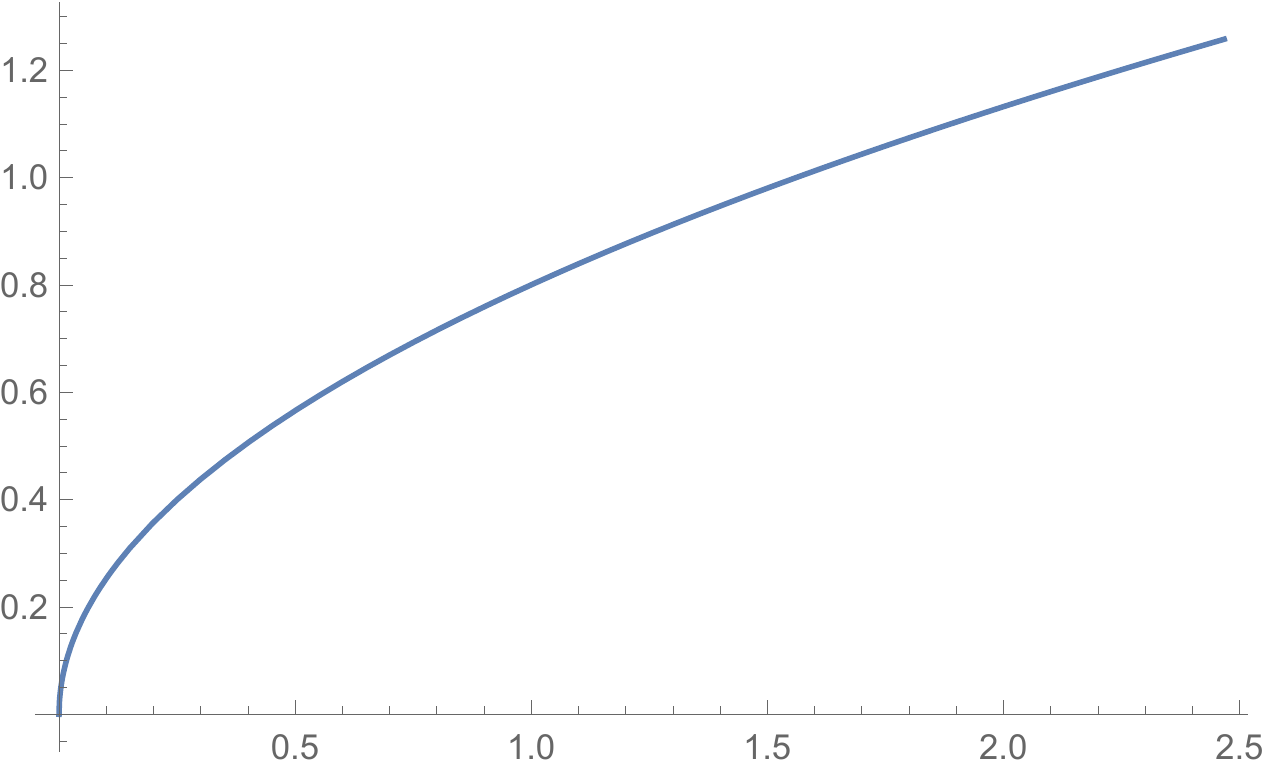}
     \caption{$\sqrt{k}-\lambda_1 \sin\sqrt{k}\xi$}\label{P2_Non_well_fig2}
   \end{minipage}
\end{figure}

\begin{figure}[H]
   \begin{minipage}{0.48\textwidth}
     \centering
     \includegraphics[width=.7\linewidth]{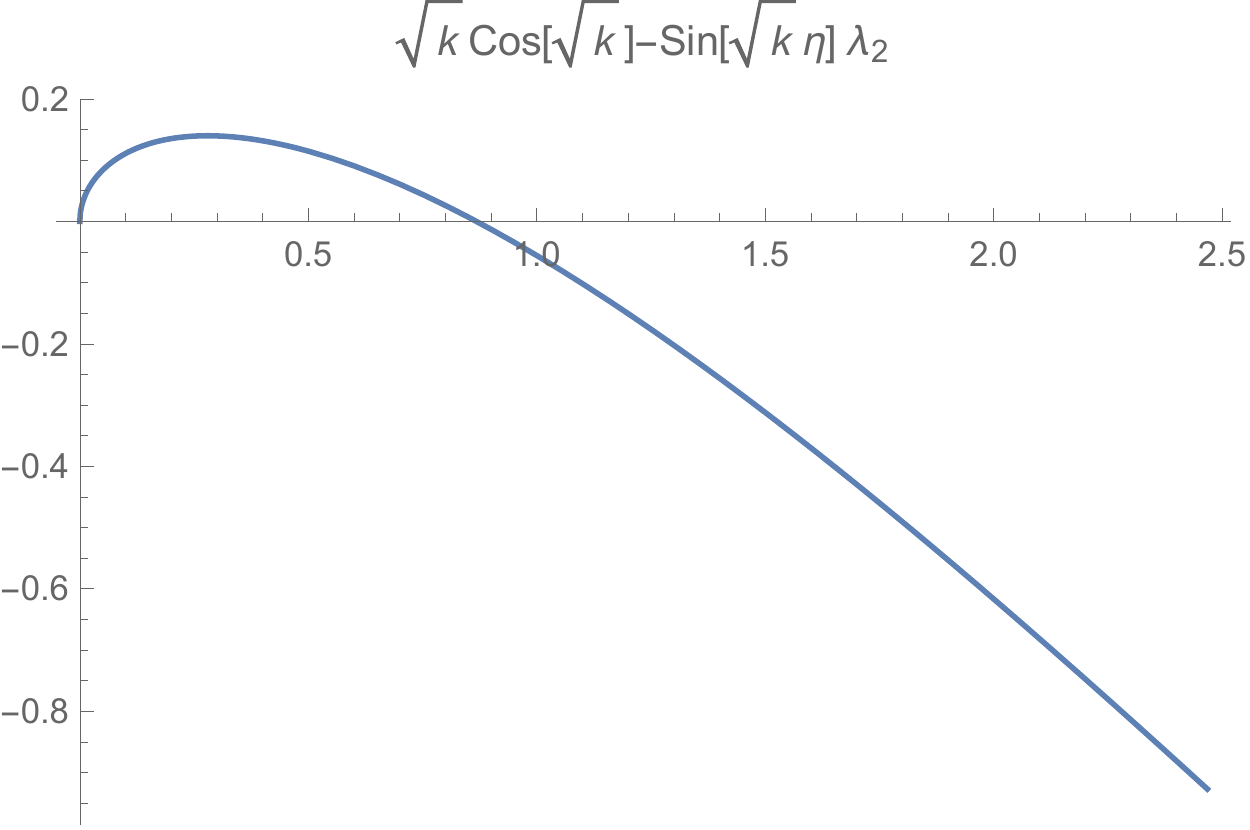}
     \caption{$\sqrt{k}\cos\sqrt{k}-\lambda_2 \sin\sqrt{k}\eta$}\label{P2_Non_well_fig3}
   \end{minipage}\hfill
   \begin{minipage}{0.48\textwidth}
     \centering
     \includegraphics[width=.7\linewidth]{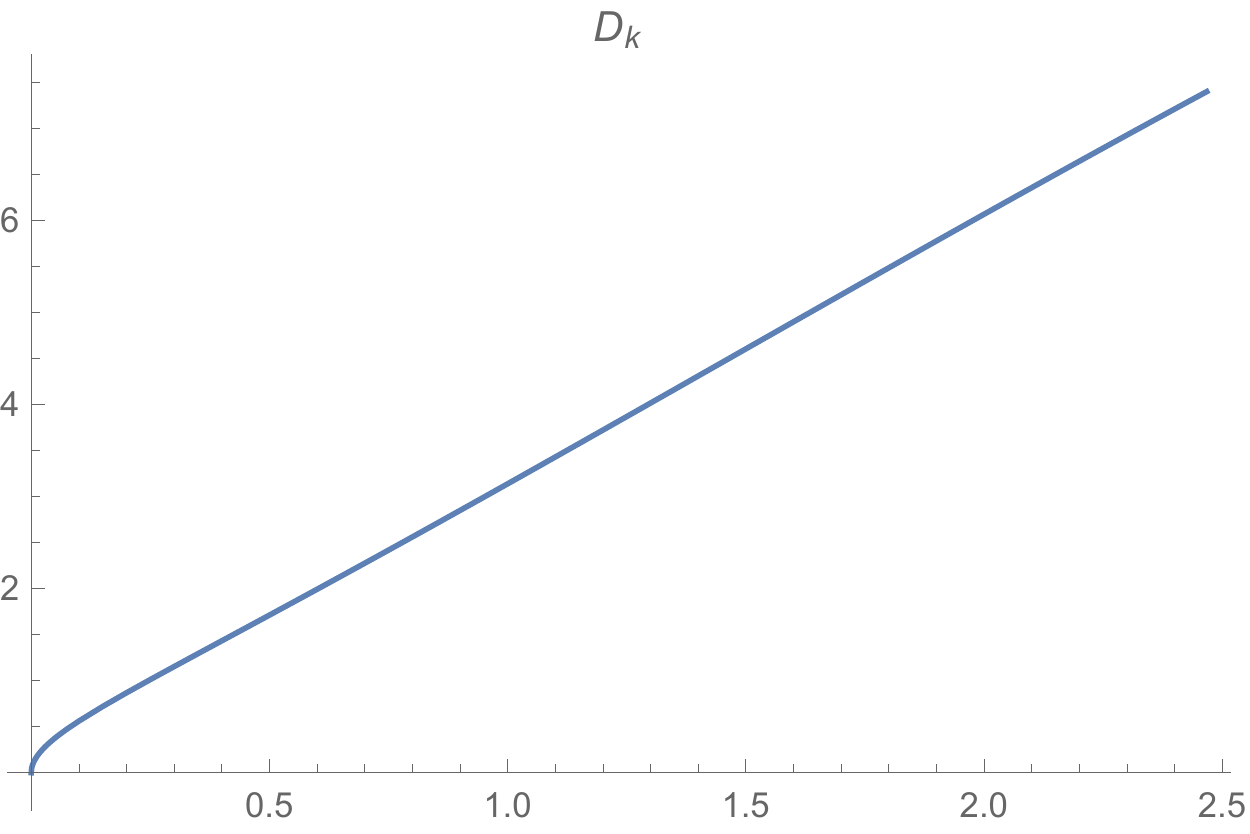}
     \caption{$D_k$}\label{P2_Non_well_fig4}
   \end{minipage}
\end{figure}

\begin{figure}[H]
   \begin{minipage}{0.48\textwidth}
     \centering
     \includegraphics[width=.7\linewidth]{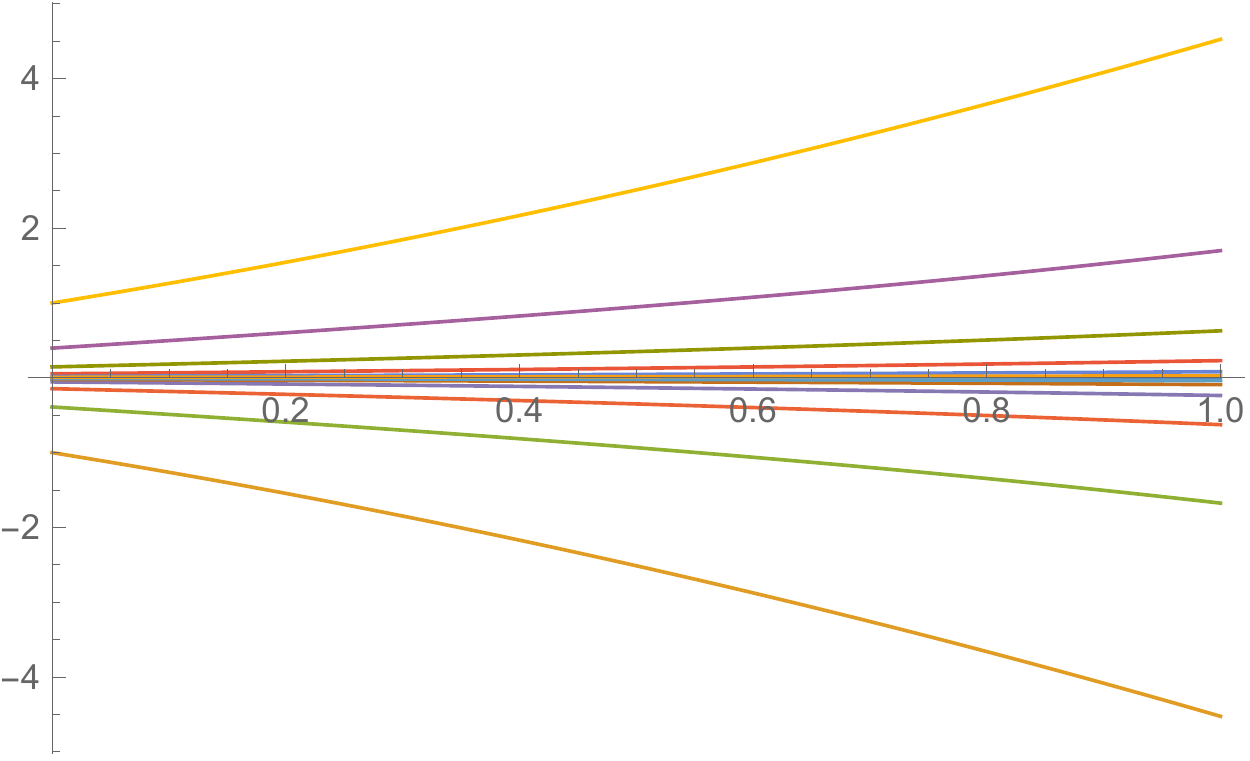}
     \caption{$k=0.49$}\label{P2_Non_well_seq1}
   \end{minipage}\hfill
   \begin{minipage}{0.48\textwidth}
     \centering
     \includegraphics[width=.7\linewidth]{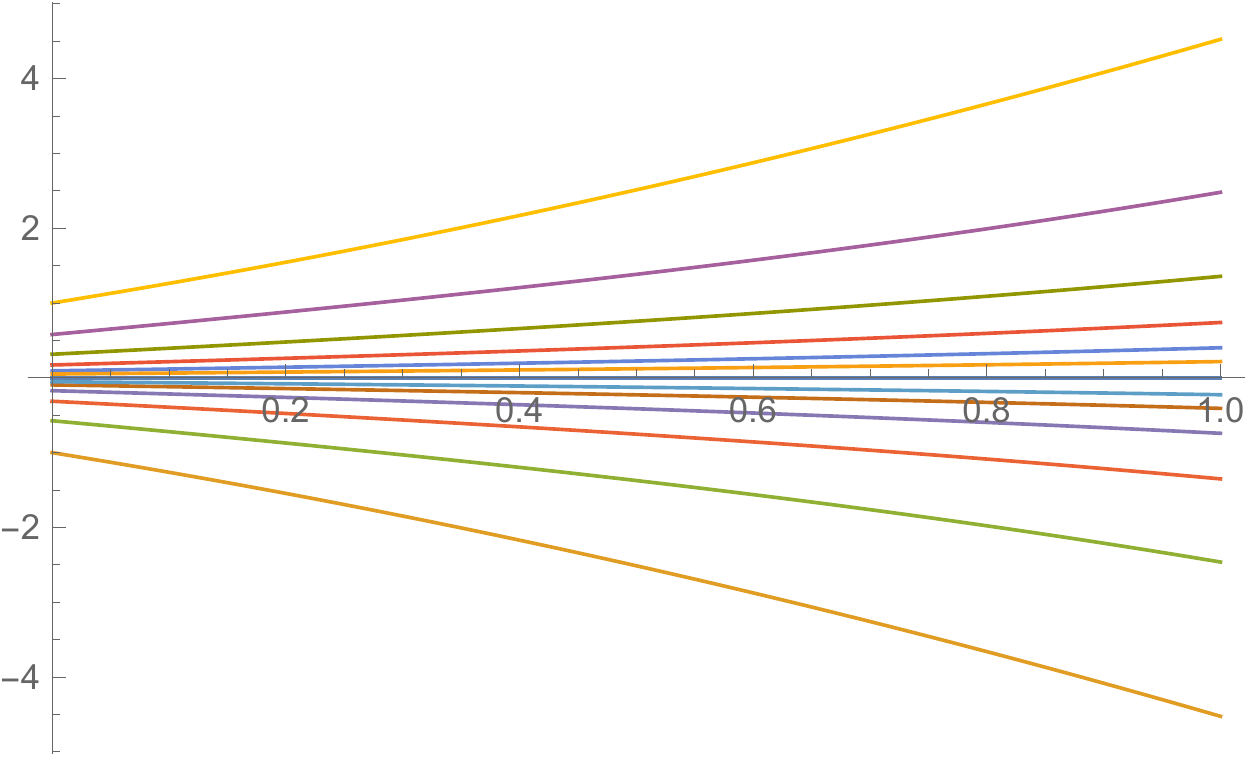}
     \caption{$k=1$}\label{P2_Non_well_seq2}
   \end{minipage}
\end{figure}

\begin{figure}[H]
\begin{center}
\includegraphics[width=8cm,height=6cm]{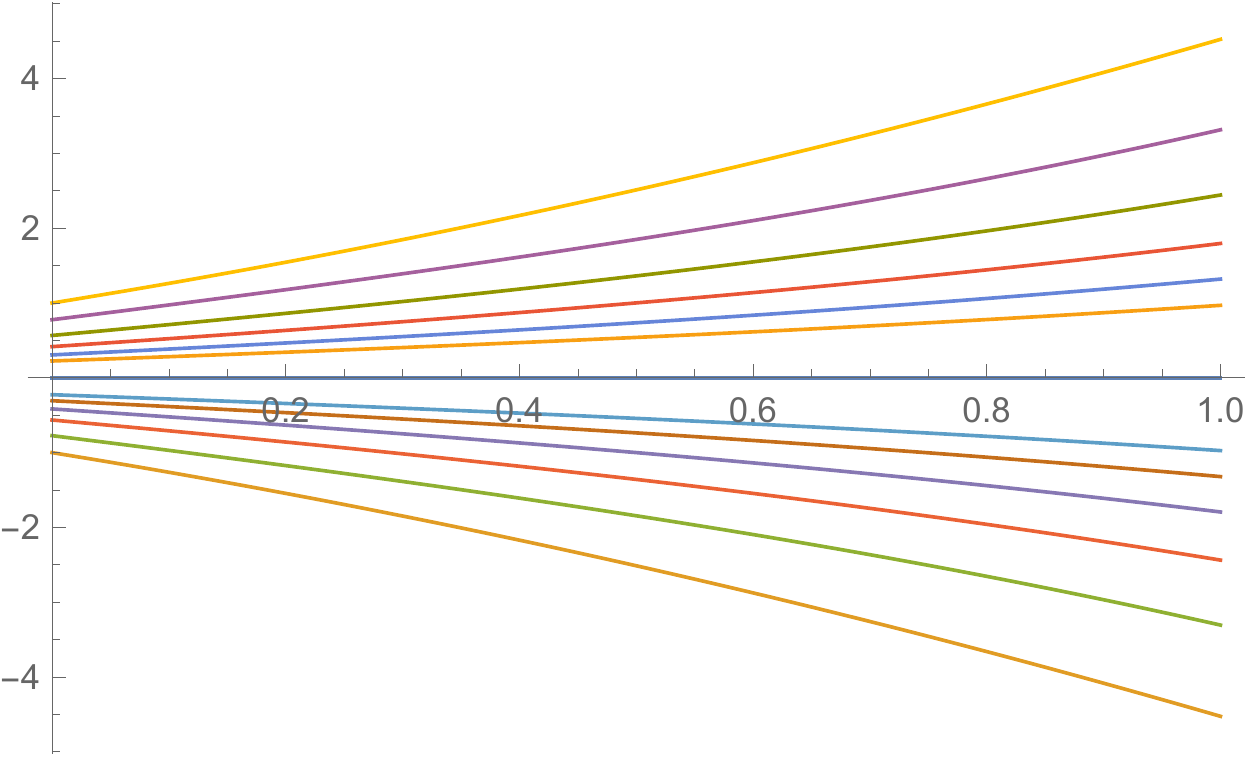}\caption{$k=2.3$}
\label{ppr1fig2}
\end{center}
\end{figure}
 \section{For $k$ Negative, i.e., $k<0$.} \label{ppr2sec5}

In this section for negative $k$ existence of BVPs \eqref{ppr2sec2eq1}-\eqref{ppr2sec2eq2} have been studied. This section is also similar to section three. It is also divided into five sub sections. In first subsection we derive Green's Function, sign of Green's function, solution of BVPs \eqref{ppr2sec2eq1}-\eqref{ppr2sec2eq2} and Maximum principle. In second Subsection we prove existence of some differential inequality which is used to prove monotonic behavior of sequences of well ordered upper-lower solutions. In subsection three MI-Technique has been developed also some lemmas and proposition have been given which we use to prove existence. In fourth subsection we show that the derivative of solution is bounded. Then we establish a theorem which proves convergence of solution between upper and lower solution. In the last subsection we give examples and compute the range of $k<0$ for which all the sufficient conditions are true and region of existence is also computed.
\subsection{Deduction of Green's Function}\label{ppr2section3}
\begin{lemma}
\label{ppr2sec4lemma1}If Green's function of the BVPs \eqref{ppr2sec3eq1}-\eqref{ppr2sec3eq2} is $G(x,s)$. Then $G(x,s)$ is given by
$$G(x,s)=\frac{1}{\sqrt{\mid k\mid}D_{k'}}
\begin{cases}
\sqrt{\mid k\mid}\cosh\sqrt{\mid k\mid}x(\lambda_{2}\sinh\sqrt{\mid k\mid}(\eta-s)-\sqrt{\mid k\mid}\cosh\sqrt{\mid k\mid}(s-1))\\+\lambda_{1}\sinh\sqrt{\mid k\mid}(s-x)(\sqrt{\mid k\mid}\cosh\sqrt{\mid k\mid}(\xi-1)-\lambda_{2}\sinh\sqrt{\mid k\mid}(\eta-\xi)), & 0\leq x\leq s\leq\xi; \\
-\sqrt{\mid k\mid}\{\cosh\sqrt{\mid k\mid}s(\sqrt{\mid k\mid}\cosh\sqrt{\mid k\mid}(x-1)+\lambda_{2}\sinh\sqrt{\mid k\mid}(x-\eta))\}, & s\leq x,s\leq\xi; \\
-(\sqrt{\mid k\mid}\cosh\sqrt{\mid k\mid}x+\lambda_1\sinh\sqrt{\mid k\mid}(x-\xi))(\sqrt{\mid k\mid}\cosh\sqrt{\mid k\mid}(s-1)
\\+\lambda_2 \sinh\sqrt{\mid k\mid}(s-\eta)), & \xi\leq x\leq s\leq\eta; \\
-(\sqrt{\mid k\mid}\cosh\sqrt{\mid k\mid}s+\lambda_1\sinh\sqrt{\mid k\mid}(s-\xi))(\sqrt{\mid k\mid}\cosh\sqrt{\mid k\mid}(x-1)\\
+\lambda_2\sinh\sqrt{\mid k\mid}(x-\eta)), & s\leq x, s\leq\eta; \\
-\sqrt{\mid k\mid}\cosh\sqrt{\mid k\mid}(s-1)(\sqrt{\mid k\mid}\cosh\sqrt{\mid k\mid}x+\lambda_1\sinh\sqrt{\mid k\mid}(x-\xi)), & \eta\leq x\leq s\leq 1; \\
\sqrt{\mid k\mid}\cosh\sqrt{\mid k\mid}(x-1)(\lambda_1\sinh\sqrt{\mid k\mid}(\xi-s)-\sqrt{\mid k\mid}\cosh\sqrt{\mid k\mid}s)\\+\lambda_2\sinh\sqrt{\mid k\mid}(s-x)(\sqrt{\mid k\mid}\cosh\sqrt{\mid k\mid}\eta+\lambda_1\sinh\sqrt{\mid k\mid}(\eta-\xi)), & s\leq x, s\leq 1,
\end{cases}
$$
where $D_{k'}=\mid k\mid\sinh\sqrt{\mid k\mid}-\lambda_{2}\sqrt{\mid k\mid}\cosh\sqrt{\mid k\mid}\eta-\lambda_{1}\lambda_{2}\sinh\sqrt{\mid k\mid}(\eta-\xi)+\lambda_{1}\sqrt{\mid k\mid}\cosh\sqrt{\mid k\mid}(\xi-1).$
\end{lemma}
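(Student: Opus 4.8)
The plan is to obtain $G(x,s)$ directly from the Green's function already constructed for the positive-$k$ problem in Lemma \ref{ppr2sec3lemma1}, by means of the substitution $k\mapsto-|k|$, since every entry there is an analytic function of the parameter $\sqrt{k}$. For $k<0$ one has $\sqrt{k}=i\sqrt{|k|}$, whence $\cos\sqrt{k}\,t=\cosh\sqrt{|k|}\,t$ and $\sin\sqrt{k}\,t=i\sinh\sqrt{|k|}\,t$ for every argument $t$. First I would record the consequences for the building blocks that occur in Lemma \ref{ppr2sec3lemma1}, namely $\sqrt{k}\cos\sqrt{k}\,t=i\sqrt{|k|}\cosh\sqrt{|k|}\,t$ and $\sqrt{k}\sin\sqrt{k}\,t=-\sqrt{|k|}\sinh\sqrt{|k|}\,t$, so that each of the two ``solution factors'' making up a branch (for instance $\sqrt{k}\cos\sqrt{k}x+\lambda_1\sin\sqrt{k}(x-\xi)$) carries a single factor $i$ and becomes a real hyperbolic expression once that $i$ is extracted.

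Next I would substitute into the denominator. A short computation gives $D_k=-i\,D_{k'}$, with $D_{k'}$ exactly the quantity named in the statement, and hence $\sqrt{k}\,D_k=(i\sqrt{|k|})(-i\,D_{k'})=\sqrt{|k|}\,D_{k'}$, so the common prefactor transforms into the real factor $\tfrac{1}{\sqrt{|k|}\,D_{k'}}$ appearing in the claim. I would then pass the substitution through the six branches one at a time. Each branch is a product (or a sum of products) of two solution factors, so it acquires an overall $i^2=-1$; this sign sits in the hyperbolic numerator, and after rewriting quantities such as $\sinh\sqrt{|k|}(s-\eta)=-\sinh\sqrt{|k|}(\eta-s)$ one recovers precisely the six hyperbolic numerators, together with their displayed leading signs $(+,-,-,-,-,+)$, stated in the lemma.

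The step I expect to need the most care is justifying that this formal manipulation is legitimate; the cleanest way is to bypass analytic continuation altogether and verify the defining properties directly. For each fixed $s$ the proposed $G(\cdot,s)$ must solve $-G''-kG=0$ on $[0,1]\setminus\{s\}$, satisfy the homogeneous conditions \eqref{ppr2sec3eq2}, be continuous at $x=s$, and exhibit the jump $G_x(s^-,s)-G_x(s^+,s)=\mp 1$ (with the sign fixed by the normalization in \eqref{ppr2sec3eq3}). These four requirements determine $G$ uniquely whenever $D_{k'}\neq0$, and checking them on the hyperbolic expression is a computation structurally identical to that for Lemma \ref{ppr2sec3lemma1}, i.e.\ to the one in \cite{verma2019monotone}, with $\cos,\sin$ replaced throughout by $\cosh,\sinh$. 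The genuine obstacle is bookkeeping: one must correctly split $[0,1]$ by the break points $0<\xi\le\eta<1$ and $s$, and confirm that the extra non-local terms produced by the conditions $u'(0)=\lambda_1u(\xi)$ and $u'(1)=\lambda_2u(\eta)$ — namely the $\lambda_1\sinh\sqrt{|k|}(s-x)$ and $\lambda_2\sinh\sqrt{|k|}(s-x)$ contributions in the first and last branches — match continuously at $x=\xi$ and $x=\eta$ and with the adjacent middle branches, so that $G$ is single-valued across all regions.
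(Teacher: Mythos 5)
Your proposal is correct, but it takes a genuinely different route from the paper. The paper's entire proof is a one-line deferral: ``Proof is similar to the proof described for $k<0$ in \cite{verma2019monotone}'', i.e.\ the Green's function is reconstructed from scratch there by the direct method (complementary function plus particular integral, then matching continuity and the jump in $G_x$ at $x=s$ across the break points $\xi,\eta$). You instead transfer Lemma \ref{ppr2sec3lemma1} by the substitution $\sqrt{k}=i\sqrt{|k|}$, and your bookkeeping is right: with $\mu=\sqrt{|k|}$ one checks $D_k=-i\,D_{k'}$, hence $\sqrt{k}\,D_k=\mu D_{k'}$ is real, and since every branch numerator in Lemma \ref{ppr2sec3lemma1} is a sum of products of exactly two factors each contributing one $i$, every branch acquires $i^2=-1$; tracking the odd reflections $\sinh\mu(s-\eta)=-\sinh\mu(\eta-s)$ etc.\ reproduces exactly the six stated hyperbolic numerators with leading signs $(+,-,-,-,-,+)$ — I verified this branch by branch. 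Your decision to close the formal gap not by analytic continuation but by verifying the defining properties (the ODE $-G''-kG=0$ off the diagonal, the two non-local boundary conditions, continuity at $x=s$, the unit jump in $G_x$ whose sign is fixed by the normalization $u=-\int_0^1 G(x,s)g(s)\,ds$ in \eqref{ppr2sec3eq3}, together with uniqueness when $D_{k'}\neq0$) is sound, and that verification is structurally the same computation the paper outsources to \cite{verma2019monotone}. The trade-off: your substitution argument is more economical and yields a consistency check tying the $k>0$ and $k<0$ lemmas together (it would also catch typos in either formula), while the paper's referenced direct construction is self-contained for $k<0$ and never needs the complexification step to be justified at all.
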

\begin{proof}
Proof is similar to the proof described for $k<0$ in \cite{verma2019monotone}.
\end{proof}
Let us assume that \\

$[A'_1]:$  $\sqrt{\mid k\mid}\sinh\sqrt{\mid k\mid}-\lambda_2 \cosh\sqrt{\mid k\mid}\eta\geq0$, $\sqrt{\mid k\mid}\sinh\sqrt{\mid k\mid}\xi+(\lambda_1 -\sqrt{\mid k\mid})\cosh\sqrt{\mid k\mid}\xi\leq0$  and \\ \indent $\sqrt{\mid k\mid}-\lambda_1 \cosh\sqrt{\mid k\mid}\xi>0.$\\

In section \ref{section-6}, we have shown graphically that above inequalities can be satisfied when $k <0$.\\
\textbf{Remark 1:} $\sqrt{\mid k\mid}\sinh\sqrt{\mid k\mid}\xi+(\lambda_1 -\sqrt{\mid k\mid})\cosh\sqrt{\mid k\mid}\xi\leq0$ only if $ \lambda_1-\sqrt{|k|}\leq0.$

\begin{lemma}
\label{ppr2sec4lemma2} Suppose $[A'_1]$ is satisfied, then $G(x,s)\leq0$, $\forall x\in [0,1]$.
\end{lemma}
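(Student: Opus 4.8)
The plan is to closely follow the proof of Lemma~\ref{ppr2sec3lemma2}, replacing the circular estimates by hyperbolic ones. Set $m:=\sqrt{\mid k\mid}$. Every branch of $G$ in Lemma~\ref{ppr2sec4lemma1} carries the common prefactor $1/(mD_{k'})$, so I would first fix the sign of $D_{k'}$ and then show that each of the six case-numerators has the \emph{opposite} sign, which forces $G\le0$. Abbreviate $\Lambda_1:=m\sinh m-\lambda_2\cosh m\eta$ and $\Lambda_2:=m\cosh m-\lambda_2\sinh m\eta$; the first clause of $[A'_1]$ says exactly $\Lambda_1\ge0$, and since $\cosh t-\sinh t=e^{-t}>0$ one gets $\Lambda_2>\Lambda_1\ge0$. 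These are the hyperbolic counterparts of the quantities $\sqrt{k}\sin\sqrt{k}+\lambda_2\cos\sqrt{k}\eta$ and $\sqrt{k}\cos\sqrt{k}-\lambda_2\sin\sqrt{k}\eta$ from Lemma~\ref{ppr2sec3lemma2}.

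The technical core is one auxiliary estimate used throughout: for every $t\in[0,\eta]$,
\[
m\cosh m(1-t)\ \ge\ \lambda_2\sinh m(\eta-t).
\]
I would derive it from $[A'_1]$ by rewriting clause~1 as $\lambda_2\le m\sinh m/\cosh m\eta$, so that the claim reduces to $\cosh m\eta\,\cosh m(1-t)\ge\sinh m\,\sinh m(\eta-t)$; a product-to-sum expansion collapses the difference of the two sides to $\tfrac12\big[\cosh m(1-\eta-t)+\cosh m(1-\eta+t)\big]\ge0$. Granting this, every factor of the form $m\cosh m(z-1)+\lambda_2\sinh m(z-\eta)$ with $z\le\eta$ is $\ge0$. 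Writing $D_{k'}=m\Lambda_1+\lambda_1\big(m\cosh m(\xi-1)-\lambda_2\sinh m(\eta-\xi)\big)$ and applying the estimate at $t=\xi$ gives $D_{k'}>0$; hence $1/(mD_{k'})>0$ and it remains only to show each numerator is $\le0$.

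For five of the six branches this is immediate. In the four ``product'' cases (namely $s\le x,\ s\le\xi$; $\xi\le x\le s\le\eta$; $s\le x,\ s\le\eta$; and $\eta\le x\le s\le1$) the numerator is $-(\text{factor}_1)(\text{factor}_2)$: a factor of type $m\cosh m\tau+\lambda_1\sinh m(\tau-\xi)$ is positive because the relevant variable satisfies $\tau\ge\xi$, so its $\sinh$ term is nonnegative, while a factor of type $m\cosh m(z-1)+\lambda_2\sinh m(z-\eta)$ is nonnegative by the auxiliary estimate; hence the numerator is $\le0$. In the remaining corner case $\eta\le s\le x\le1$ both summands of the numerator are separately $\le0$, since $s-x\le0$ and $\xi\le\eta\le s$ make the two relevant $\sinh$ factors nonpositive while their companions are positive.

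The genuine obstacle is the single corner branch $0\le x\le s\le\xi$, where the numerator is the sum of a term $m\cosh mx\,(\lambda_2\sinh m(\eta-s)-m\cosh m(s-1))\le0$ and a cross term $\lambda_1\sinh m(s-x)\,(m\cosh m(\xi-1)-\lambda_2\sinh m(\eta-\xi))\ge0$ of opposite sign, so a termwise estimate fails, exactly as for $a_1,b_1$ in Lemma~\ref{ppr2sec3lemma2}. Here I would expand $\cosh m(s-1)$, $\sinh m(s-\eta)$, $\sinh m(s-x)$, $\cosh m(\xi-1)$ and $\sinh m(\eta-\xi)$ by the addition formulas, regroup the numerator by $\cosh ms$ and $\sinh ms$, and then exploit the monotonicity of $\cosh$ and $\sinh$ on $[0,\infty)$ (so that $\sinh ms\le\sinh m\xi$ and $\cosh m\xi\le\cosh ms$ for $0\le s\le\xi$) together with $\Lambda_1,\Lambda_2\ge0$, the third clause $m-\lambda_1\cosh m\xi>0$, and Remark~1 ($\lambda_1\le m$) to bound the regrouped expression by a manifestly nonpositive quantity. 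This is the one place where $[A'_1]$ is needed in full strength; once it is settled, $G(x,s)\le0$ on all of $[0,1]^2$, and the branches obtained from the $x\leftrightarrow s$ symmetry follow ``in similar fashion''.
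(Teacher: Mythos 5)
Your proposal is correct, and its skeleton coincides with the paper's: fix the sign of $D_{k'}$, then check the sign of each branch of the Green's function of Lemma \ref{ppr2sec4lemma1} under $[A'_1]$, in hyperbolic parallel to Lemma \ref{ppr2sec3lemma2}. The substantive difference is one of completeness: the paper's written proof carries out only the first step, via the factorization $D_{k'}=(\sqrt{|k|}\sinh\sqrt{|k|}-\lambda_2\cosh\sqrt{|k|}\eta)(\sqrt{|k|}-\lambda_1\sinh\sqrt{|k|}\xi)+\lambda_1\cosh\sqrt{|k|}\xi\,(\sqrt{|k|}\cosh\sqrt{|k|}-\lambda_2\sinh\sqrt{|k|}\eta)$, and then simply asserts that simplifying each subinterval with $[A'_1]$ yields $G\le0$; your auxiliary inequality $m\cosh m(1-t)\ge\lambda_2\sinh m(\eta-t)$ on $[0,\eta]$ (with $m=\sqrt{|k|}$; your product-to-sum verification of it is correct) appears nowhere in the paper and is what turns that assertion into a checkable argument, disposing of $D_{k'}>0$, of the $\lambda_2$-type factor in every product branch, and of one term in each corner branch uniformly. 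Two points to tighten. First, in the branches $s\le x,\ s\le\xi$ and $s\le x,\ s\le\eta$ the variable $x$ may exceed $\eta$, where your estimate is silent; add the trivial observation that $m\cosh m(x-1)+\lambda_2\sinh m(x-\eta)>0$ there since both terms are nonnegative. Second, your sketch of the hard branch $0\le x\le s\le\xi$ does close with exactly the tools you list: expanding $\sinh m(s-x)$, the coefficient of $\sinh mx$ in the numerator is $-\lambda_1\cosh ms\,(m\cosh m(1-\xi)-\lambda_2\sinh m(\eta-\xi))\le0$ by your estimate at $t=\xi$, and since $m\cosh m(1-\xi)-\lambda_2\sinh m(\eta-\xi)=\Lambda_2\cosh m\xi-\Lambda_1\sinh m\xi$ (with your $\Lambda_1,\Lambda_2$), the negated coefficient of $\cosh mx$ regroups as $\Lambda_2A-\Lambda_1B$, where $A=m\cosh ms-\lambda_1\cosh m\xi\,\sinh ms$ and $B=(m-\lambda_1\sinh m\xi)\sinh ms$; here $B\ge0$ because $\sinh m\xi<\cosh m\xi$ and the third clause of $[A'_1]$ holds, while $A-B=me^{-ms}-\lambda_1e^{-m\xi}\sinh ms\ge e^{-ms}(m-\lambda_1\cosh m\xi)>0$, so $A\ge B\ge0$ and $\Lambda_2A-\Lambda_1B\ge\Lambda_1(A-B)\ge0$, forcing the branch numerator to be $\le0$; note that Remark 1 ($\lambda_1\le m$) is not actually needed. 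In sum, your route proves what the paper only claims, at the modest cost of one auxiliary estimate.
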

\begin{proof}
We first prove $D_k'>0,$  for this we have,
\begin{eqnarray*}
D_{k'}&&=\mid k\mid\sinh\sqrt{\mid k\mid}-\lambda_{2}\sqrt{\mid k\mid}\cosh\sqrt{\mid k\mid}\eta-\lambda_{1}\lambda_{2}\sinh\sqrt{\mid k\mid}(\eta-\xi)+\lambda_{1}\sqrt{\mid k\mid}\cosh\sqrt{\mid k\mid}(\xi-1)\\
&&=(\sqrt{\mid k\mid}\sinh\sqrt{\mid k\mid}-\lambda_2 \cosh\sqrt{\mid k\mid}\eta)(\sqrt{\mid k\mid}-\lambda_1 \sinh\sqrt{\mid k\mid}\xi)\\
&& \quad +\lambda_1 \cosh \sqrt{\mid k\mid}\xi(\sqrt{\mid k\mid}\cosh\sqrt{\mid k\mid}-\lambda_2 \sinh\sqrt{\mid k\mid}\eta)>0.
\end{eqnarray*}
 To prove  $G(x,s)\leq0$, $\forall x\in [0,1]$ we simplify $G(x,s)$ given in lemma \ref{ppr2sec4lemma1} for each sub interval of interval $[0,1]$ individually and using assumptions $[A'_1]$ we obtain $G(x,s)\leq0$.
\end{proof}
\begin{lemma}
\label{ppr2sec4lemma3} If $g(x)$ is continuous in $[0,1]$ and $c\geq 0$ is any constant, then the solution $u(x)\in C^2(0,1)$ of  BVP \eqref{ppr2sec2eq1} and \eqref{ppr2sec2eq2} is given by
\begin{eqnarray}
\label{ppr2sec4eq3}u(x)=\displaystyle{\frac{c}{D'_k}\big(\sqrt{\abs{k}}\cosh\sqrt{\abs{k}}x+\lambda_1 \sinh\sqrt{\abs{k}}(x-\xi)\big)}-\int_{0}^{1}G(x,s)g(s)ds.
\end{eqnarray}
\end{lemma}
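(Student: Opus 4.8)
The plan is to decompose the solution as a complementary function plus a particular integral, exactly elaborating the one-line argument given for the positive-$k$ analogue in Lemma \ref{ppr2sec3lemma3}. Writing $k=-\abs{k}$, equation \eqref{ppr2sec2eq1} reads $-u''+\abs{k}u=g$, whose homogeneous version $u''=\abs{k}u$ is spanned by $\cosh\sqrt{\abs{k}}x$ and $\sinh\sqrt{\abs{k}}x$. I would therefore look for $u=w+v$, where $w$ is a homogeneous solution absorbing the inhomogeneous boundary datum $c$ and $v$ is a particular integral built from the Green's function.

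For the particular integral I would take $v(x)=-\int_0^1 G(x,s)g(s)\,ds$ with $G$ the Green's function of Lemma \ref{ppr2sec4lemma1}. By construction $G$ is the Green's function of the problem \eqref{ppr2sec3eq1}--\eqref{ppr2sec3eq2} with \emph{homogeneous} boundary data ($c=0$); hence $v$ solves $-v''+\abs{k}v=g$ together with $v'(0)=\lambda_1 v(\xi)$ and $v'(1)=\lambda_2 v(\eta)$. This reduces the entire problem to choosing a homogeneous solution $w$ that supplies the single extra constant $c$ in the right-hand boundary condition while leaving the ODE and the left boundary condition undisturbed. Note also that $D'_k=D_{k'}>0$ (shown in the proof of Lemma \ref{ppr2sec4lemma2}), so the coefficient $c/D'_k$ below is well defined.

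Accordingly I set $w(x)=\frac{c}{D'_k}\big(\sqrt{\abs{k}}\cosh\sqrt{\abs{k}}x+\lambda_1\sinh\sqrt{\abs{k}}(x-\xi)\big)$, which is plainly a homogeneous solution. It then remains to check the two boundary conditions for $w$. At the left endpoint, differentiating and using that $\cosh$ is even gives $w'(0)=\tfrac{c}{D'_k}\lambda_1\sqrt{\abs{k}}\cosh\sqrt{\abs{k}}\xi=\lambda_1 w(\xi)$, so $w'(0)=\lambda_1 w(\xi)$ and, combined with $v$, the full condition $u'(0)=\lambda_1 u(\xi)$ holds. At the right endpoint I would compute $w'(1)-\lambda_2 w(\eta)$ directly; replacing $\cosh\sqrt{\abs{k}}(1-\xi)$ by $\cosh\sqrt{\abs{k}}(\xi-1)$ and collecting terms, the resulting bracket coincides exactly with $D_{k'}=\abs{k}\sinh\sqrt{\abs{k}}-\lambda_2\sqrt{\abs{k}}\cosh\sqrt{\abs{k}}\eta-\lambda_1\lambda_2\sinh\sqrt{\abs{k}}(\eta-\xi)+\lambda_1\sqrt{\abs{k}}\cosh\sqrt{\abs{k}}(\xi-1)$, so that $w'(1)-\lambda_2 w(\eta)=\tfrac{c}{D'_k}D_{k'}=c$.

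Combining the pieces by linearity, $u=w+v$ satisfies $-u''+\abs{k}u=g$, the condition $u'(0)=\lambda_1 u(\xi)$, and $u'(1)=w'(1)+\lambda_2 v(\eta)=\lambda_2 w(\eta)+c+\lambda_2 v(\eta)=\lambda_2 u(\eta)+c$, which is precisely \eqref{ppr2sec2eq1}--\eqref{ppr2sec2eq2}; this proves formula \eqref{ppr2sec4eq3}. The only genuine computation, and hence the main obstacle, is the right-endpoint identity $w'(1)-\lambda_2 w(\eta)=c$: it is exactly the algebraic cancellation that forces $D_{k'}$ to reappear, and it is what pins down the coefficient $c/D'_k$ in the complementary function. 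Everything else follows routinely once $G$ is taken to solve the homogeneous-data problem via Lemma \ref{ppr2sec4lemma1}.
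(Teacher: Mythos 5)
Your proposal is correct and follows exactly the route the paper intends: the paper's proof is a one-line deferral to the CF--PI decomposition (cf.\ Lemma \ref{ppr2sec3lemma3} and the reference to \cite{verma2019monotone}), and your argument simply fills in those details, with the key verification $w'(1)-\lambda_2 w(\eta)=\tfrac{c}{D'_k}D_{k'}=c$ (using that $\cosh$ is even) checking out. The only cosmetic remark is that well-definedness of $c/D'_k$ needs only $D_{k'}\neq 0$; its positivity, which you quote from Lemma \ref{ppr2sec4lemma2}, requires $[A'_1]$, an assumption not stated in this lemma.
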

\begin{proof}Proof is similar to the proof described in lemma 3.4 of \cite{verma2019monotone}.
\end{proof}
\begin{lemma}
\label{ppr2sec4lemma4}If $G(x,s)$ is Green's function of BVPs \eqref{ppr2sec2eq1}-\eqref{ppr2sec2eq2} then $\dfrac{\partial G}{\partial x}\leq 0$, $x \neq s.$

\end{lemma}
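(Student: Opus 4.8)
The plan is to combine the structural fact that $G$ solves the homogeneous equation off the diagonal with the explicit piecewise formula of Lemma~\ref{ppr2sec4lemma1}. First I would observe that for $x\neq s$ the Green's function satisfies $-G_{xx}(x,s)-kG(x,s)=0$, so $G_{xx}(x,s)=-kG(x,s)=\abs{k}G(x,s)$. Since $G(x,s)\leq 0$ by Lemma~\ref{ppr2sec4lemma2}, this yields $G_{xx}(x,s)\leq 0$; that is, $G(\cdot,s)$ is concave, equivalently $\partial_x G(\cdot,s)$ is non-increasing, on each of the intervals $[0,s)$ and $(s,1]$. The nonlocal conditions introduce no interior forcing at $x=\xi$ or $x=\eta$, so $G(\cdot,s)$ is $C^1$ across those points and the concavity indeed holds on the whole of each interval.

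On the left interval this already settles the claim. Because $\partial_x G(\cdot,s)$ is non-increasing on $[0,s)$, its supremum there is attained at $x=0$, and the homogeneous boundary condition carried by the Green's function gives $\partial_x G(0,s)=\lambda_1 G(\xi,s)\leq 0$, using $\lambda_1\geq 0$ and $G(\xi,s)\leq 0$. Hence $\partial_x G(x,s)\leq \partial_x G(0,s)\leq 0$ for every $x\in[0,s)$, with no further appeal to $[A'_1]$.

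For the right interval $(s,1]$ the same monotonicity reduces the problem to a single inequality: since $\partial_x G(\cdot,s)$ is non-increasing there, $\partial_x G(x,s)\leq 0$ holds throughout $(s,1]$ if and only if it holds at the endpoint $x=s^+$. Thus it suffices to differentiate, in $x$, the three ``$s\leq x$'' branches of Lemma~\ref{ppr2sec4lemma1} (for $s\in(0,\xi)$, $s\in(\xi,\eta)$ and $s\in(\eta,1)$), evaluate at $x=s^+$, and show each value is $\leq 0$. Here one uses the addition formulas for $\cosh$ and $\sinh$ to factor the derivative into products of the quantities appearing in $[A'_1]$ (such as $\sqrt{\abs{k}}\sinh\sqrt{\abs{k}}-\lambda_2\cosh\sqrt{\abs{k}}\eta$ and $\sqrt{\abs{k}}-\lambda_1\cosh\sqrt{\abs{k}}\xi$) and invokes $D_{k'}>0$, which is established inside the proof of Lemma~\ref{ppr2sec4lemma2}. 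This is exactly the computational pattern of Lemma~\ref{ppr2sec3lemma4a} in the positive case, transcribed from trigonometric to hyperbolic functions.

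The main obstacle is precisely this last step on $(s,1]$. Concavity alone is \emph{not} enough there, because the slope of $G(\cdot,s)$ jumps upward across the diagonal (the jump of $\partial_x G$ at $x=s$ equals $+1$), so $\partial_x G(s^+,s)$ does not inherit the sign of $\partial_x G(s^-,s)$; the non-positivity must come entirely from the full strength of $[A'_1]$ together with $D_{k'}>0$. I would therefore concentrate the effort on verifying, branch by branch, that each diagonal value $\partial_x G(s^+,s)$ can be written as $D_{k'}^{-1}$ times a manifestly non-positive combination of the $[A'_1]$-quantities. If a branch resists such a factorisation under $[A'_1]$ as stated, that is the signal that the hypotheses on $k,\lambda_1,\lambda_2$ guaranteeing $\partial_x G\leq 0$ on the right of the diagonal need to be strengthened beyond $[A'_1]$.
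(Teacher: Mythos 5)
Your analysis is more careful than the paper's own proof, and your closing caveat turns out to be the operative conclusion. The paper proves Lemma \ref{ppr2sec4lemma4} by integrating $G''=-kG$ from $0$ to $x$ and asserting $\frac{\partial G(x,s)}{\partial x}=\lambda_1 G(\xi,s)-k\int_0^x G(t,s)\,dt\leq 0$ for all $x\neq s$, using $k<0$, $G\leq 0$ and $\partial_x G(0,s)=\lambda_1 G(\xi,s)$. For $x<s$ this is exactly your concavity-plus-left-boundary argument in integrated form, and that half is sound. But for $x>s$ the paper integrates straight across the diagonal and silently drops the unit jump of $\partial_x G$ at $x=s$ --- precisely the term you flagged; the correct identity there is $\partial_x G(x,s)=\lambda_1 G(\xi,s)-k\int_0^x G(t,s)\,dt+1$, from which no sign conclusion follows. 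So your reduction of the $x>s$ half to the single diagonal inequality $\partial_x G(s^+,s)\leq 0$ is the right way to pose what actually remains to be shown, and the paper has not shown it.

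The genuine gap is that this remaining step cannot be completed: the branch-by-branch verification under $[A'_1]$ fails, so the lemma as stated is false right of the diagonal. On the branch $0\leq s\leq x\leq\xi$ one computes
\begin{equation*}
\frac{\partial G(x,s)}{\partial x}=\frac{-\sqrt{|k|}\cosh(\sqrt{|k|}s)}{D_{k'}}\Big(\sqrt{|k|}\sinh\sqrt{|k|}(x-1)+\lambda_2\cosh\sqrt{|k|}(x-\eta)\Big),
\end{equation*}
and at $x=0$ the bracket equals $-\big(\sqrt{|k|}\sinh\sqrt{|k|}-\lambda_2\cosh\sqrt{|k|}\eta\big)$, which is $\leq 0$ by the \emph{first} inequality of $[A'_1]$; hence whenever that inequality is strict, continuity forces $\partial_x G(s^+,s)>0$ for all small $s$ --- the hypothesis $[A'_1]$ works against the claimed sign rather than for it, so no hyperbolic factorisation into $[A'_1]$-quantities can exist. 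Concretely, with the paper's own Section 6 data ($\xi=0.2$, $\eta=0.3$, $\lambda_1=\tfrac14$, $\lambda_2=\tfrac19$, $k=-4$), all three conditions of $[A'_1]$ hold and $D_{k'}\approx 15.53>0$, yet at $s=0.05$ one gets $\partial_x G(s^-,s)\approx-0.17$ and $\partial_x G(s^+,s)\approx+0.83$ (their difference being exactly your jump $+1$), so $\partial_x G>0$ on a whole subinterval to the right of the diagonal. In summary: your plan proves the lemma for $x<s$, correctly identifies why the paper's argument breaks for $x>s$, and its final step fails not through any defect of your strategy but because the statement requires hypotheses beyond $[A'_1]$; this also undermines the use of the sign of $\partial_x G$ in Lemma \ref{ppr2sec4lemma5a}(b) downstream.
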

\begin{proof}
Since $G(x,s)$ satisfies the equation,
\begin{eqnarray}
\label{ppr2sec4eq*}&&u''(x)+k u(x)=0 ,\quad  0<x<1,\\
\label{ppr2sec4eq**}&&u'(0)=\lambda_{1}u(\xi), \quad u'(1)=\lambda_{2} u(\eta).
\end{eqnarray}
Integrating equation \eqref{ppr2sec4eq*} from $0$ to $x$ we have,
\begin{eqnarray*}
&& \int_0^{x} G''(x,s)dx=\int_0^{x} -k~ G(x,s)dx\\
\Rightarrow &&\dfrac{\partial G(x,s)}{•\partial x}=\lambda_1 G(\xi,s)-k  \int_0^{x} G(x,s)dx\leq0, \quad x \neq s,
\end{eqnarray*}
as $ k<0 $ and $G(x,s)\leq 0~~ \forall x \in [0,1].$

\end{proof}
\begin{prop}{\textbf{Maximum Principle:}}
\label{ppr2sec4prop1} If $g(x)\geq0$ is continuous in $[0,1]$ and $c\geq 0$ is any constant and $[A'_1]$ is satisfied then the solution $u(x)$ given in equation \eqref{ppr2sec4eq3} is non-negative.
\end{prop}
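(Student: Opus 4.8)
The plan is to show that $u(x)\ge 0$ by verifying that each of the two terms in the representation formula \eqref{ppr2sec4eq3} is non-negative under the stated hypotheses. The representation reads
$$
u(x)=\frac{c}{D'_k}\big(\sqrt{\abs{k}}\cosh\sqrt{\abs{k}}x+\lambda_1\sinh\sqrt{\abs{k}}(x-\xi)\big)-\int_0^1 G(x,s)\,g(s)\,ds,
$$
so it suffices to treat the boundary-data term (the one carrying $c$) and the integral term separately.

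First I would handle the integral term, which is the easy half. Since $g(x)\ge 0$ by hypothesis and Lemma \ref{ppr2sec4lemma2} gives $G(x,s)\le 0$ for all $x\in[0,1]$ under assumption $[A'_1]$, the product $G(x,s)g(s)\le 0$, hence $\int_0^1 G(x,s)g(s)\,ds\le 0$ and therefore $-\int_0^1 G(x,s)g(s)\,ds\ge 0$. This requires nothing beyond the sign of the Green's function already established.

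Next I would address the prefactor term $\frac{c}{D'_k}\big(\sqrt{\abs{k}}\cosh\sqrt{\abs{k}}x+\lambda_1\sinh\sqrt{\abs{k}}(x-\xi)\big)$, which is where the real work lies. The constant $c\ge 0$ is given, and $D'_k>0$ was shown in the proof of Lemma \ref{ppr2sec4lemma2}, so the fraction $c/D'_k\ge 0$; it remains to verify that the bracketed expression is non-negative on all of $[0,1]$. The term $\sqrt{\abs{k}}\cosh\sqrt{\abs{k}}x$ is clearly positive, but $\lambda_1\sinh\sqrt{\abs{k}}(x-\xi)$ is negative on $[0,\xi)$ since the hyperbolic sine has negative argument there, so the positivity is not immediate and must be argued carefully. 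I would rewrite the bracket by expanding the hyperbolic functions, following the rearrangement used in the positive-$k$ case (the analogue of the step in Proposition \ref{ppr2sec3prop1} where the expression was recast as $\cosh\sqrt{\abs{k}}x(\sqrt{\abs{k}}-\lambda_1\sinh\sqrt{\abs{k}}\xi)+\lambda_1\cosh\sqrt{\abs{k}}\xi\sinh\sqrt{\abs{k}}x$), and then invoke the third inequality of $[A'_1]$, namely $\sqrt{\abs{k}}-\lambda_1\cosh\sqrt{\abs{k}}\xi>0$, together with Remark 1 which guarantees $\lambda_1-\sqrt{\abs{k}}\le 0$, to conclude each grouped coefficient is non-negative.

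The main obstacle is precisely this second term: on the subinterval $[0,\xi)$ the factor $\sinh\sqrt{\abs{k}}(x-\xi)$ changes sign, so a naive term-by-term estimate fails and one must find the correct regrouping into manifestly non-negative pieces before the hypotheses in $[A'_1]$ can be applied. Once the bracket is shown to be non-negative, combining it with $c/D'_k\ge 0$ and the non-negativity of the integral term yields $u(x)\ge 0$ for all $x\in[0,1]$, completing the proof.
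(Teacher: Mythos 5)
Your proposal is correct and follows essentially the same route as the paper: split the representation \eqref{ppr2sec4eq3} into the boundary term and the Green's-function integral, use $g\geq 0$ with $G(x,s)\leq 0$ from Lemma \ref{ppr2sec4lemma2} for the integral, and regroup the bracket as $\cosh\sqrt{\abs{k}}x\,(\sqrt{\abs{k}}-\lambda_1\sinh\sqrt{\abs{k}}\xi)+\lambda_1\cosh\sqrt{\abs{k}}\xi\,\sinh\sqrt{\abs{k}}x$ before applying $[A'_1]$ and $D'_k>0$. In fact your write-up is more careful than the paper's own one-line argument, whose displayed rearrangement garbles the prefactor (dropping $c/D'_k$ and miswriting $\cosh\sqrt{\abs{k}}\xi$ as $\cosh\sqrt{\abs{k}}x$), and your observation that $\sqrt{\abs{k}}-\lambda_1\cosh\sqrt{\abs{k}}\xi>0$ forces $\sqrt{\abs{k}}-\lambda_1\sinh\sqrt{\abs{k}}\xi>0$ is exactly the missing detail.
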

\begin{proof}
 Given that $g(x)\geq0$, $c\geq 0$ and $[A'_1]$  is satisfied. Now \eqref{ppr2sec4eq3} can be written as,
\begin{eqnarray*}
u(x)=\cosh\sqrt{\abs{k}}\xi\big(\abs{k}-\lambda_1 \sinh\sqrt{|k|}\xi\big)+\lambda_1 \sinh\sqrt{\abs{k}}x\cosh\sqrt{\abs{k}}x-\int_{0}^{1}G(x,s)g(s)ds.
\end{eqnarray*}
 Applying  $[A'_1]$ and lemma \ref{ppr2sec4lemma2} in above equation we can easily obtain the required result.
\end{proof}
\subsection{Existence of Some Differential Inequalities}\label{ppr2sec4subsec2}
\begin{lemma}
\label{ppr2sec4lemma5} Suppose $L_1\in \mathbb{R}^+,$  $ k<0$ are  such that $ L_1+k\leq 0$ and $L_2(x):[0,1]\rightarrow \mathbb{R}^+ $ is such that  $L_2(0)=0$, then the following inequalities hold,
\begin{itemize}
\item[(a)] If $(L_1+k)+\sup(L_2'(x)+L_2(x)\sqrt{\mid k \mid})\leq 0,$ then $$F(x)=(L_1+k)\sinh\sqrt{\mid k \mid}x+L_2(x)\sqrt{\mid k \mid}\cosh\sqrt{\mid k \mid}x\leq 0, ~~ \forall x\in [0,1]. $$
\item[(b)] $(L_1+k)\cosh\sqrt{\mid k \mid}x+L_2(x)\sqrt{\mid k \mid}\sinh\sqrt{\mid k \mid}x\leq 0, ~~ \forall x\in [0,1]. $
\end{itemize}
\end{lemma}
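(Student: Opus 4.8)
The plan is to handle the two inequalities by different but linked mechanisms: part~(a) by an endpoint-plus-monotonicity argument in the spirit of Lemma~\ref{ppr2sec3lemma4}(b), and part~(b) by an algebraic reduction to part~(a).

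For part~(a), I would first evaluate $F$ at the left endpoint: since $\sinh 0=0$ and $L_2(0)=0$ by hypothesis, $F(0)=0$. It then suffices to show $F$ is non-increasing on $[0,1]$, for then $F(x)\le F(0)=0$. Differentiating and factoring out $\sqrt{\mid k\mid}$ gives
\begin{eqnarray*}
F'(x)=\sqrt{\mid k\mid}\big[(L_1+k+L_2'(x))\cosh\sqrt{\mid k\mid}x+L_2(x)\sqrt{\mid k\mid}\sinh\sqrt{\mid k\mid}x\big].
\end{eqnarray*}
Since $\sinh\sqrt{\mid k\mid}x\le\cosh\sqrt{\mid k\mid}x$ on $[0,1]$ and $L_2(x)\sqrt{\mid k\mid}\ge 0$, I would bound the second term in the bracket by $L_2(x)\sqrt{\mid k\mid}\cosh\sqrt{\mid k\mid}x$, obtaining
\begin{eqnarray*}
F'(x)\le \sqrt{\mid k\mid}\,\cosh\sqrt{\mid k\mid}x\,\big[(L_1+k)+L_2'(x)+L_2(x)\sqrt{\mid k\mid}\big].
\end{eqnarray*}
The bracket is at most $(L_1+k)+\sup\big(L_2'(x)+L_2(x)\sqrt{\mid k\mid}\big)\le 0$ by the conditional hypothesis of (a); since $\cosh\sqrt{\mid k\mid}x>0$ this yields $F'(x)\le 0$ and hence (a).

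For part~(b), write $H(x)$ for its left-hand side. The key observation is the identity $\cosh t-\sinh t=e^{-t}$, which gives
\begin{eqnarray*}
H(x)-F(x)=\big[(L_1+k)-L_2(x)\sqrt{\mid k\mid}\big]e^{-\sqrt{\mid k\mid}x}.
\end{eqnarray*}
Here $(L_1+k)\le 0$ and $-L_2(x)\sqrt{\mid k\mid}\le 0$, while the exponential is positive, so $H(x)\le F(x)$; combined with $F(x)\le 0$ from part~(a) this forces $H(x)\le 0$.

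The routine steps are the endpoint evaluation and the differentiation. The part that carries the weight is choosing the comparison in (a) correctly — applying $\sinh\le\cosh$ precisely so that the pointwise estimate collapses onto the supremum appearing in the hypothesis — and, for (b), noticing that the difference $H-F$ telescopes through $\cosh-\sinh=e^{-\sqrt{\mid k\mid}x}$, so that (b) is inherited from (a) rather than proved afresh. The one subtlety to flag is that (b) genuinely needs the conditional hypothesis of (a) to be in force (it is what supplies $F\le 0$); under the ambient hypotheses alone $H$ can be positive for large $L_2$, so I would state (b) as holding under the hypothesis of (a).
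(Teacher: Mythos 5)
Your proof is correct and takes essentially the same route as the paper's: part (a) via $F(0)=0$ together with $F'(x)\le 0$ on $[0,1]$ (your $\sinh\sqrt{\mid k\mid}x\le\cosh\sqrt{\mid k\mid}x$ bound is exactly the detail the paper's one-line proof leaves implicit), and part (b) via the comparison $H(x)\le F(x)$, which the paper asserts as ``clearly'' and you justify through $\cosh t-\sinh t=e^{-t}>0$. Your closing caveat is also well taken: the paper states (b) without the conditional hypothesis of (a), but its proof uses $F(x)\le 0$ from (a), so that hypothesis is tacitly in force, just as you flag.
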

\begin{proof}
\begin{itemize}
\item[(a)] Since $ F(0)=0,$ and $ F'(x)\leq0$ whenever $(L_1+k)+\sup(L_2'(x)+L_2(x)\sqrt{\mid k \mid})\leq 0,~\forall x\in [0,1]$ and  therefore $ F(x)\leq0, ~\forall x\in [0,1].$ This completes the proof.

\item[(b)] Clearly, $(L_1+k)\cosh\sqrt{\mid k \mid}x+L_2(x)\sqrt{\mid k \mid}\sinh\sqrt{\mid k \mid}x\leq F(x)$. The result is obvious.
\end{itemize}
\end{proof}
\textbf{Remark 2:} From lemma \ref{ppr2sec4lemma5} (a), the inequality  $$(L_1+k)+\sup(L_2'(x)+L_2(x)\sqrt{\mid k \mid})\leq 0,$$ gives a bound for  $k$ such that,
   $$k\leq -\sup\bigg(L_1+L_2'(x)+\frac{{L_2(x)}^2}{2}+\frac{L_2(x)}{2}\sqrt{L_2^2(x)+4(L_1+L_2'(x))}\bigg),~\forall x\in [0,1]. $$
\begin{lemma}
\label{ppr2sec4lemma5a} Suppose $[A'_1]$ and conditions of lemma \ref{ppr2sec4lemma5} are satisfied, then $ \forall x\in [0,1] $ following inequalities hold,
\begin{itemize}
\item[(a)] $ (L_1+k)(\sqrt{\mid k \mid}\cosh\sqrt{\mid k \mid}x+\lambda_1 \sinh\sqrt{\mid k \mid}(x-\xi))\pm L_2(x)\sqrt{\mid k \mid}(\sqrt{k}\sinh\sqrt{\mid k \mid}x-\lambda_1 \cosh\sqrt{\mid k \mid}(x-\xi))\leq 0;$
\item[(b)] $(L_1+k) G(x,s)\pm L_2(x)\frac{\partial G(x,s)}{\partial x}\geq 0;\quad x\neq s,$ suppose to be the case for $(L_1+k)+\sup~ L_2(x)(\lambda_1-k)\leq 0.$
\end{itemize}
\end{lemma}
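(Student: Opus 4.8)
The plan is to mirror the structure of the proof of Lemma \ref{ppr2sec3lemma4a}, replacing the trigonometric identities by their hyperbolic counterparts and the sign conditions $[A_1]$, Lemma \ref{ppr2sec3lemma4} by $[A'_1]$ and Lemma \ref{ppr2sec4lemma5}. Throughout I would abbreviate $C=\cosh\sqrt{\abs{k}}x$, $S=\sinh\sqrt{\abs{k}}x$ and write $Y_1(x)=(L_1+k)\cosh\sqrt{\abs{k}}x+L_2(x)\sqrt{\abs{k}}\sinh\sqrt{\abs{k}}x$ and $Y_2(x)=(L_1+k)\sinh\sqrt{\abs{k}}x+L_2(x)\sqrt{\abs{k}}\cosh\sqrt{\abs{k}}x$, so that Lemma \ref{ppr2sec4lemma5} gives $Y_1,Y_2\le0$ on $[0,1]$. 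First I would record two consequences of the hypotheses used repeatedly: since $\cosh\ge\sinh$, the inequality $\sqrt{\abs{k}}-\lambda_1\cosh\sqrt{\abs{k}}\xi>0$ in $[A'_1]$ gives $\lambda_1\sinh\sqrt{\abs{k}}\xi\le\lambda_1\cosh\sqrt{\abs{k}}\xi<\sqrt{\abs{k}}$; and since $L_2'\ge0$, the condition in Lemma \ref{ppr2sec4lemma5}(a) yields $(L_1+k)+L_2(x)\sqrt{\abs{k}}\le0$ for all $x$.

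For part (a) I would substitute the addition formulas $\sinh\sqrt{\abs{k}}(x-\xi)=S\cosh\sqrt{\abs{k}}\xi-C\sinh\sqrt{\abs{k}}\xi$ and $\cosh\sqrt{\abs{k}}(x-\xi)=C\cosh\sqrt{\abs{k}}\xi-S\sinh\sqrt{\abs{k}}\xi$ and collect the expression as $C\cdot\mathrm{coef}_C+S\cdot\mathrm{coef}_S$. The new feature compared with the trigonometric case is the minus cross-term in the $\cosh$ addition formula, which blocks the clean factorisation through $Y_1,Y_2$ and leaves a residual; I would dispose of it as follows. A direct computation gives $\mathrm{coef}_C\le0$ (using $\sqrt{\abs{k}}-\lambda_1\sinh\sqrt{\abs{k}}\xi>0$) and $\mathrm{coef}_C+\mathrm{coef}_S=\sqrt{\abs{k}}\bigl((L_1+k)+L_2(x)\sqrt{\abs{k}}\bigr)-\lambda_1\bigl(\cosh\sqrt{\abs{k}}\xi-\sinh\sqrt{\abs{k}}\xi\bigr)\bigl(L_2(x)\sqrt{\abs{k}}-(L_1+k)\bigr)\le0$, the two final factors being $e^{-\sqrt{\abs{k}}\xi}>0$ and $\ge0$. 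Since $0\le S\le C$, if $\mathrm{coef}_S\ge0$ then $C\,\mathrm{coef}_C+S\,\mathrm{coef}_S\le C(\mathrm{coef}_C+\mathrm{coef}_S)\le0$, while if $\mathrm{coef}_S<0$ both summands are already nonpositive; in either case the expression is $\le0$. The negative-sign case is identical after replacing $Y_1$ by the trivially nonpositive $(L_1+k)\cosh\sqrt{\abs{k}}x-L_2(x)\sqrt{\abs{k}}\sinh\sqrt{\abs{k}}x$.

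For part (b) I would split according to the sign. The minus-sign inequality is immediate: by Lemma \ref{ppr2sec4lemma2} we have $G\le0$ and $L_1+k\le0$, so $(L_1+k)G\ge0$, while by Lemma \ref{ppr2sec4lemma4} $\partial G/\partial x\le0$, so $-L_2(x)\,\partial G/\partial x\ge0$, and the sum is $\ge0$. The plus-sign inequality is the real work, since now $(L_1+k)G\ge0$ and $L_2(x)\,\partial G/\partial x\le0$ pull against each other and no term-by-term sign check is possible. Writing $(L_1+k)G=\abs{L_1+k}\,\abs{G}$ and $L_2(x)\,\partial G/\partial x=-L_2(x)\abs{\partial G/\partial x}$, the claim becomes the pointwise comparison $\abs{L_1+k}\,\abs{G(x,s)}\ge L_2(x)\abs{\partial G(x,s)/\partial x}$. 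I would obtain this from the gradient estimate $\abs{\partial G/\partial x}\le(\lambda_1-k)\abs{G}$, derived from the representation $\partial G/\partial x=\lambda_1 G(\xi,s)-k\int_0^xG(t,s)\,dt$ of Lemma \ref{ppr2sec4lemma4} together with the monotonicity of $G(\cdot,s)$ (also from Lemma \ref{ppr2sec4lemma4}); the stated hypothesis $(L_1+k)+\sup L_2(x)(\lambda_1-k)\le0$ is then precisely $\abs{L_1+k}\ge L_2(x)(\lambda_1-k)$, which closes the argument. Equivalently, one may proceed interval-by-interval over the six pieces of Lemma \ref{ppr2sec4lemma1}, substituting the explicit $G$ and $\partial G/\partial x$, factoring out the $s$-prefactor whose sign is fixed by $[A'_1]$, and reducing each remaining bracket to a nonnegative combination of $Y_1,Y_2$.

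I expect the plus-sign case of part (b) to be the main obstacle: on the subintervals where $G$ carries the factors $\lambda_2\sinh\sqrt{\abs{k}}(x-\eta)$ or $\lambda_2\sinh\sqrt{\abs{k}}(s-x)$, differentiation produces $\cosh$ terms of the \emph{wrong} sign, which cannot be killed termwise and must instead be dominated quantitatively. It is exactly here that the extra hypothesis $(L_1+k)+\sup L_2(x)(\lambda_1-k)\le0$ and the monotonicity $L_2'\ge0$ enter; part (a) and the remaining subintervals go through on $[A'_1]$ and Lemma \ref{ppr2sec4lemma5} alone.
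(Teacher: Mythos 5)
Your part (a) is correct and is in fact more careful than the paper, whose entire proof of (a) is the sentence ``Using lemma \ref{ppr2sec4lemma5} and inequality $[A'_1]$, it is easy to prove the result.'' You correctly spot the real subtlety the paper glosses over: transcribing the trigonometric factorisation of Lemma \ref{ppr2sec3lemma4a}(a) verbatim leaves the residual $2\lambda_1 L_2(x)\sqrt{\abs{k}}\sinh(\sqrt{\abs{k}}\xi)\sinh(\sqrt{\abs{k}}x)\ge 0$, which has the wrong sign, and your coefficient argument ($\mathrm{coef}_C\le 0$, $\mathrm{coef}_C+\mathrm{coef}_S\le 0$, case split using $0\le S\le C$) soundly circumvents it; I verified the identity $\mathrm{coef}_C+\mathrm{coef}_S=\sqrt{\abs{k}}\bigl((L_1+k)+L_2\sqrt{\abs{k}}\bigr)+\lambda_1 e^{-\sqrt{\abs{k}}\xi}\bigl((L_1+k)-L_2\sqrt{\abs{k}}\bigr)$. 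One caveat: the negative-sign case is not literally ``identical,'' since there $\mathrm{coef}_S\le 0$ while $\mathrm{coef}_C$ has unclear sign, and the comparison $S\,\mathrm{coef}_S\le C\,\mathrm{coef}_S$ now runs the wrong way; but the factorisation you allude to does close it, because $E_-=(\sqrt{\abs{k}}-\lambda_1\sinh\sqrt{\abs{k}}\xi)\bigl((L_1+k)C-L_2\sqrt{\abs{k}}S\bigr)+\lambda_1\cosh(\sqrt{\abs{k}}\xi)F(x)-2\lambda_1 L_2(x)\sqrt{\abs{k}}\sinh(\sqrt{\abs{k}}\xi)S$, and this time the residual has the favourable sign.

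The genuine gap is in the plus-sign case of (b), which you yourself identify as the real work. Your gradient estimate $\abs{\partial G/\partial x}\le(\lambda_1-k)\abs{G(x,s)}$ needs $\lambda_1\abs{G(\xi,s)}\le\lambda_1\abs{G(x,s)}$; but Lemma \ref{ppr2sec4lemma4} makes $\abs{G(\cdot,s)}$ \emph{nondecreasing} (since $G\le 0$ and $\partial G/\partial x\le 0$), so this holds only for $x\ge\xi$ and reverses on $[0,\xi)$. On $x\ge\xi$ your chain is fine: $\abs{\partial_x G}=\lambda_1\abs{G(\xi,s)}+\abs{k}\int_0^x\abs{G}\le\lambda_1\abs{G(x,s)}+\abs{k}\,x\,\abs{G(x,s)}\le(\lambda_1-k)\abs{G(x,s)}$. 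But at $x=0$ the same representation gives $\abs{\partial_x G(0,s)}=\lambda_1\abs{G(\xi,s)}$, and the claimed bound would force $\abs{G(\xi,s)}\le(1+\abs{k}/\lambda_1)\abs{G(0,s)}$, which nothing assumed guarantees; generically $\abs{G(\xi,s)}$ strictly exceeds $\abs{G(0,s)}$. On $[0,\xi)$ the lemma's conclusion must instead be rescued by $L_2(0)=0$, $L_2'\ge 0$ making $L_2$ small near $0$ --- i.e.\ the conclusion can survive where your intermediate pointwise estimate fails, so the estimate cannot be the proof there. Your fallback (``proceed interval-by-interval over the six pieces'') is exactly what the paper's proof of (b) consists of --- its full text is ``we proceed similar to lemma \ref{ppr2sec3lemma4a}'' --- but you do not execute it, and your closing paragraph concedes the problematic subintervals without resolving them. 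Net comparison: your (a) genuinely fills a hole the paper waves at; your (b) route is a genuinely different, more conceptual idea that nicely explains where the hypothesis $(L_1+k)+\sup L_2(x)(\lambda_1-k)\le 0$ comes from, but as written it proves the inequality only on $\xi\le x\le 1$ and leaves $0\le x<\xi$ open.
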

\begin{proof}
\begin{itemize}
\item[(a)]  Using lemma \ref{ppr2sec4lemma5} and inequality $ [A'_1] $, it is easy to prove the result.

\item[(b)] To prove $(L_1+k) G(x,s)+ L_2(x)\frac{\partial G(x,s)}{\partial x}\geq 0,$ we proceed similar to lemma \ref{ppr2sec3lemma4a} given in section \ref{ppr2sec3subsec2}.
\end{itemize}
\end{proof}
\textbf{Remark 3:} From condition lemma \ref{ppr2sec4lemma5a} (b) it can be observed that $$k\leq \frac{L_1+\lambda_1 \sup~ L_2(x)}{1-\sup L_2(x)},$$ only if $(1-\sup~ L_2(x))>0.$\\

From Remark 1, 2 and 3 we can conclude that,
 $$[A'_2]: k\leq \min \bigg\{-L_1,-\lambda_1^2,\frac{L_1+\lambda_1 \sup~ L_2(x)}{1-\sup~ L_2(x)},-\sup\bigg(L_1+L_2'(x)+\frac{{L_2(x)}^2}{2}+\frac{L_2(x)}{2}\sqrt{L_2^2(x)+4(L_1+L_2'(x))}\bigg)\bigg\}. $$

\subsection{Well Ordered Case: Construction of Upper-Lower  Solutions}
In this section we provide some assumptions based on lower-upper solutions and non linear term $\psi(x,u,u')$. We develop MI-Technique based on functions $\{c_n(x)\}_n$ and $\{d_n(x)\}_n$. We discuss some lemmas and propositions to shows that upper solutions are monotonically decreasing and lower solutions are monotonically increasing. We develop a theorem which gives that these sequence of  are uniformly converges to the solution of BVPs \eqref{ppr2sec1eq1}-\eqref{ppr2sec1eq2} under some sufficient conditions.\\

Assume the following properties,\\

$[A'_3]:$ there exist lower-upper solutions  $c(x)$ and $d(x)$ of  BVP \eqref{ppr2sec2eq1}-\eqref{ppr2sec2eq2} where $c(x)$, $d(x)$ $\in C^2[0,1]$ such that, $$ c(x)\leq d(x) ~~\forall x\in [0,1];$$

$[A'_4]:$  $\psi(x,v,w):E\rightarrow \mathbb{R}$ is continuous function on $E:=\{(x,v,w)\in I\times \mathbb{R}^2:c(x)\leq v\leq d(x)\};$\\

$[A'_5]:$ $\forall (x,v_1,w), (x,v_2,w) \in E,$ $\exists$  a constant $L_1\geq 0$ such that
$$v_1\leq v_2\Rightarrow \psi(x,v_2,w)-\psi(x,v_1,w)\geq -L_1(v_2-v_1);$$

$[A'_6]:$  $\forall (x,v,w_1), (x,v,w_2) \in E,$ $\exists$ a function $L_2(x)\geq 0$ such that
$$\mid \psi(x,v,w_1)-\psi(x,v,w_2)\mid \leq L_2(x)\mid(w_1-w_2)\mid.$$
\begin{lemma}
\label{ppr2sec4lemma7} If $c_n(x)$ is lower solution of \eqref{ppr2sec2eq1}-\eqref{ppr2sec2eq2}, then $c_n(x)\leq c_{n+1}(x)$, $\forall x\in [0,1]$, where $c_{n+1}(x)$ is defined by equation \eqref{ppr2sec3eq4}-\eqref{ppr2sec3eq5}.
\end{lemma}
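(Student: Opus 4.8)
The plan is to mirror the proof of Lemma \ref{ppr2sec3lemma6} from the reverse-ordered section, but to invoke the Maximum Principle (Proposition \ref{ppr2sec4prop1}) in place of the Anti-Maximum Principle. The reason for the switch is structural: in the well-ordered ($k<0$) regime the Green's function is sign-reversed (Lemma \ref{ppr2sec4lemma2}) and we expect the sequence of lower solutions to be monotonically \emph{increasing} rather than decreasing, so the inequalities must be arranged to feed the non-negativity conclusion of Proposition \ref{ppr2sec4prop1}. First I would set $u(x)=c_{n+1}(x)-c_n(x)$ and show that $u$ solves a linear problem of the form \eqref{ppr2sec2eq1}-\eqref{ppr2sec2eq2} whose forcing term $g$ and boundary constant $c$ are both non-negative.

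For the differential part, subtracting the equation for $c_n$ from the iterative scheme \eqref{ppr2sec3eq4} gives
\begin{eqnarray*}
-u''(x)-k u(x)&=&(-c_{n+1}''(x)-k c_{n+1}(x))-(-c_n''(x)-k c_n(x))\\
&=&(\psi(x,c_n,c_n')-k c_n)+(c_n''(x)+k c_n(x))\\
&=&c_n''(x)+\psi(x,c_n,c_n').
\end{eqnarray*}
Since $c_n$ is a lower solution, its defining inequality $-c_n''(x)\leq\psi(x,c_n,c_n')$ yields $g(x):=c_n''(x)+\psi(x,c_n,c_n')\geq 0$. For the boundary data, the condition at $0$ gives $u'(0)=\lambda_1 c_{n+1}(\xi)-\lambda_1 c_n(\xi)=\lambda_1 u(\xi)$ directly, while at $x=1$ the lower-solution inequality $c_n'(1)\leq\lambda_2 c_n(\eta)$ combined with $c_{n+1}'(1)=\lambda_2 c_{n+1}(\eta)$ from \eqref{ppr2sec3eq5} gives $u'(1)=\lambda_2 c_{n+1}(\eta)-c_n'(1)\geq\lambda_2 u(\eta)$, so $u'(1)=\lambda_2 u(\eta)+c$ with $c\geq 0$. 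Thus $u$ is the solution \eqref{ppr2sec4eq3} of \eqref{ppr2sec2eq1}-\eqref{ppr2sec2eq2} with $g\geq 0$ and $c\geq 0$, and because $[A'_1]$ is assumed throughout, Proposition \ref{ppr2sec4prop1} delivers $u(x)\geq 0$, i.e. $c_n(x)\leq c_{n+1}(x)$ for all $x\in[0,1]$.

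The computation itself is routine; the only genuinely load-bearing step is the careful tracking of inequality directions, so that both the forcing term and the boundary constant fed into the Maximum Principle come out non-negative. This is precisely where the well-ordered case diverges from Lemma \ref{ppr2sec3lemma6}: there the lower-solution inequalities produced data of the opposite sign and the Anti-Maximum Principle forced $u\leq 0$, whereas here the same definition of lower solution, read against the sign of the $k<0$ Green's function, must instead match the hypotheses of Proposition \ref{ppr2sec4prop1}. I would double-check that the definition of lower solution used is the one from Definition \ref{ppr2sec3de1}, unchanged in this section, since the entire conclusion hinges on $g\geq 0$ and $c\geq 0$ being correctly inherited from it.
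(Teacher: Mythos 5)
Your proof is correct and takes exactly the route the paper intends: the paper's proof of Lemma \ref{ppr2sec4lemma7} is simply the remark that the argument of Lemma \ref{ppr2sec3lemma6} carries over, i.e., set $u=c_{n+1}-c_n$, verify $g(x)=c_n''(x)+\psi(x,c_n,c_n')\geq 0$ and the boundary constant $c\geq 0$, and invoke the Maximum Principle (Proposition \ref{ppr2sec4prop1}) where the $k>0$ case used the Anti-Maximum Principle (Proposition \ref{ppr2sec3prop1}). One minor correction to your closing commentary: in Lemma \ref{ppr2sec3lemma6} the data fed to the principle are likewise $g\geq 0$ and $c\geq 0$ --- what flips between the two cases is the principle's conclusion ($u\leq 0$ there, $u\geq 0$ here), not the sign of the data.
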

\begin{proof}
To proof is similar to lemma \ref{ppr2sec3lemma6}.
\end{proof}

\begin{prop}
\label{ppr2sec4prop2} Suppose $L_1\in \mathbb{R}^+$, $L_2(x):[0,1]\rightarrow \mathbb{R}^+ $ and $L_2(0)=0$ are such that $[A'_1]-[A'_6]$ holds then $\forall x\in [0,1]$, if $c_n(x)$ is lower solution of \eqref{ppr2sec2eq1}-\eqref{ppr2sec2eq2} then,
 $$(L_1+k)u(x)+L_2(x)(sign~ u') u'\leq 0,$$
 where $u$ is any solution of BVPs \eqref{ppr2sec2eq1}-\eqref{ppr2sec2eq2}.
\end{prop}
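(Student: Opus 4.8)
The plan is to follow the template of Proposition \ref{ppr2sec3prop2}, adapting every sign to the negative-$k$ regime. First I would set $u=c_{n+1}-c_n$, where $c_n$ is a lower solution of \eqref{ppr2sec2eq1}--\eqref{ppr2sec2eq2} and $c_{n+1}$ is produced by the scheme \eqref{ppr2sec3eq4}--\eqref{ppr2sec3eq5}. Subtracting the two relations and using $-c_{n+1}''-k c_{n+1}=\psi(x,c_n,c_n')-k c_n$ gives
$$-u''-ku = c_n''+\psi(x,c_n,c_n')=:g(x),$$
which is nonnegative exactly because $c_n$ is a lower solution, i.e. $-c_n''\le\psi(x,c_n,c_n')$. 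The boundary equality $c_n'(0)=\lambda_1 c_n(\xi)$ built into Definition \ref{ppr2sec3de1} forces $u'(0)=\lambda_1 u(\xi)$, while $c_n'(1)\le\lambda_2 c_n(\eta)$ together with $c_{n+1}'(1)=\lambda_2 c_{n+1}(\eta)$ yields $u'(1)=\lambda_2 u(\eta)+c$ with $c\ge 0$. Hence $u$ is a solution of the linear problem of Lemma \ref{ppr2sec4lemma3} with $g\ge 0$ and $c\ge 0$, so it admits the representation \eqref{ppr2sec4eq3}.

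Next I would differentiate \eqref{ppr2sec4eq3}, abbreviate the complementary factor by $\Psi(x)=\sqrt{|k|}\cosh\sqrt{|k|}x+\lambda_1\sinh\sqrt{|k|}(x-\xi)$, substitute $u$ and $u'$ into the target expression, and collect the two structural pieces:
\begin{align*}
(L_1+k)u(x)+L_2(x)(\mathrm{sign}\,u')\,u'
&=\frac{c}{D'_k}\big[(L_1+k)\Psi(x)\pm L_2(x)\Psi'(x)\big]\\
&\quad-\int_0^1\Big[(L_1+k)G(x,s)\pm L_2(x)\tfrac{\partial G}{\partial x}(x,s)\Big]g(s)\,ds,
\end{align*}
the sign $\pm$ being that of $u'$. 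The complementary term is governed by inequality (a) of Lemma \ref{ppr2sec4lemma5a}, which makes the bracket nonpositive; since $c\ge 0$ and $D'_k>0$ (the latter established at the start of the proof of Lemma \ref{ppr2sec4lemma2}), this term is $\le 0$.

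The delicate point, and the genuine difference from the positive-$k$ case, is the integral term. Inequality (b) of Lemma \ref{ppr2sec4lemma5a} reads $(L_1+k)G\pm L_2\,\partial_x G\ge 0$, with the inequality \emph{reversed} relative to Lemma \ref{ppr2sec3lemma4a}(b); consequently the bracketed integrand is nonnegative, and with $g\ge 0$ the integral is nonnegative, so the explicit minus sign in front of it in \eqref{ppr2sec4eq3} renders the whole contribution $\le 0$. I expect the main obstacle to be precisely this sign coordination: one must keep $L_1+k\le 0$ (Lemma \ref{ppr2sec4lemma5}), $G\le 0$ and $\partial_x G\le 0$ (Lemmas \ref{ppr2sec4lemma2} and \ref{ppr2sec4lemma4}), and the reversed orientation of Lemma \ref{ppr2sec4lemma5a}(b) all aligned, so that the complementary and integral pieces end up on the same side. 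Once that is verified, adding the two nonpositive terms gives $(L_1+k)u+L_2(x)(\mathrm{sign}\,u')u'\le 0$, as claimed.
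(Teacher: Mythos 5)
Your proposal is correct and takes essentially the same route as the paper: the paper's own proof is just the one-line remark that the argument is similar to Proposition~\ref{ppr2sec3prop2}, and you carry out exactly that template for $k<0$, with the sign bookkeeping handled correctly (Lemma~\ref{ppr2sec4lemma5a}(a) with $c\geq 0$, $D'_k>0$ for the complementary-function term, and the reversed inequality of Lemma~\ref{ppr2sec4lemma5a}(b) playing against the minus sign in the representation \eqref{ppr2sec4eq3} for the integral term). Your write-up in fact supplies the details the paper leaves implicit, including the correct identification $u=c_{n+1}-c_n$ and the boundary relation $u'(1)=\lambda_2 u(\eta)+c$ with $c\geq 0$.
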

\begin{proof}
Proof is similar to proposition \ref{ppr2sec3prop2} in section 3.
\end{proof}
\begin{lemma}
\label{ppr2sec4lemma8} Suppose $L_1\in \mathbb{R}^+$, $L_2(x):[0,1]\rightarrow \mathbb{R}^+ $ and $L_2(0)=0$ are such that $[A'_1]-[A'_6]$ and conditions of lemma \ref{ppr2sec4lemma5} hold then the function $c_n(x)$ given by equation \eqref{ppr2sec3eq4}-\eqref{ppr2sec3eq5} satisfy,
\begin{itemize}
\item[(a)] $c_n(x)\leq c_{n+1}(x)$,
\item[(b)] $c_n(x)$ is lower solution of \eqref{ppr2sec2eq1}-\eqref{ppr2sec2eq2}.
\end{itemize}
\end{lemma}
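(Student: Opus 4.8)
The plan is to prove assertions (a) and (b) together by induction on $n$, in direct parallel with the non-well-ordered argument of Lemma \ref{ppr2sec3lemma7}, but with every monotonicity inequality reversed so as to fit the well-ordered ($k<0$) setting. The two engines of the proof are Lemma \ref{ppr2sec4lemma7}, which promotes the statement ``$c_n$ is a lower solution'' into the monotonicity $c_n\leq c_{n+1}$, and Proposition \ref{ppr2sec4prop2}, which supplies the differential inequality $(L_1+k)u+L_2(x)(\mathrm{sign}\,u')u'\leq 0$ for any solution $u$ of the linear problem. For the base case $n=0$ I note that $c_0=c$ is a lower solution by hypothesis $[A'_3]$, so Lemma \ref{ppr2sec4lemma7} gives $c_0\leq c_1$ at once, settling both (a) and (b) for $n=0$.

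For the inductive step I assume that $c_{n-1}$ is a lower solution and that $c_{n-1}\leq c_n$, and I aim to show that $c_n$ itself is a lower solution, i.e. $-c_n''-\psi(x,c_n,c_n')\leq 0$. Using the defining scheme \eqref{ppr2sec3eq4} to eliminate the second derivative via $-c_n''=\psi(x,c_{n-1},c_{n-1}')+k(c_n-c_{n-1})$, the target quantity rewrites as
\[
-c_n''-\psi(x,c_n,c_n')=\big[\psi(x,c_{n-1},c_{n-1}')-\psi(x,c_n,c_n')\big]+k(c_n-c_{n-1}).
\]
I would then split the difference of source terms through the intermediate point $(x,c_n,c_{n-1}')$: the $u$-part is controlled by the one-sided Lipschitz condition $[A'_5]$ applied with $c_{n-1}\leq c_n$, which yields $\psi(x,c_{n-1},c_{n-1}')-\psi(x,c_n,c_{n-1}')\leq L_1(c_n-c_{n-1})$, while the $w$-part is controlled by $[A'_6]$, giving $\psi(x,c_n,c_{n-1}')-\psi(x,c_n,c_n')\leq L_2(x)|c_n'-c_{n-1}'|$. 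Collecting these estimates produces
\[
-c_n''-\psi(x,c_n,c_n')\leq (L_1+k)(c_n-c_{n-1})+L_2(x)|c_n'-c_{n-1}'|.
\]

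Setting $u=c_n-c_{n-1}$, so that $u\geq 0$ and $|u'|=(\mathrm{sign}\,u')u'$, the right-hand side is exactly $(L_1+k)u+L_2(x)(\mathrm{sign}\,u')u'$, which is nonpositive by Proposition \ref{ppr2sec4prop2}. Hence $c_n$ is a lower solution, and a final invocation of Lemma \ref{ppr2sec4lemma7} delivers $c_n\leq c_{n+1}$, closing the induction. I expect the only delicate point to be the bookkeeping of signs: because the well-ordered negative-$k$ case interchanges the roles of upper and lower bounds relative to Lemma \ref{ppr2sec3lemma7}, the one-sided Lipschitz estimate must be used in its ``$\geq -L_1$'' form of $[A'_5]$, and it is the factor $(L_1+k)$ rather than $(k-L_1)$ that must appear so as to match the statement of Proposition \ref{ppr2sec4prop2} (whose conclusion already encodes the conditions of Lemma \ref{ppr2sec4lemma5}). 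The residual routine check is simply that $u=c_n-c_{n-1}$ inherits the homogeneous boundary data required to apply that proposition, which follows directly from the boundary conditions in \eqref{ppr2sec3eq5}.
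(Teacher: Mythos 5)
Your proof is correct and takes essentially the same approach as the paper: the paper proves this lemma by simply citing the recurrence argument of Lemma \ref{ppr2sec3lemma7}, which is exactly the induction you carry out --- base case via Lemma \ref{ppr2sec4lemma7}, elimination of $c_n''$ through the scheme \eqref{ppr2sec3eq4}--\eqref{ppr2sec3eq5}, the split estimate via $[A'_5]$ and $[A'_6]$, and Proposition \ref{ppr2sec4prop2} to conclude $-c_n''-\psi(x,c_n,c_n')\leq 0$. Your sign bookkeeping, replacing $(k-L_1)$ by $(L_1+k)$ and reversing the ordering of the iterates, is precisely the adaptation the paper leaves to the reader.
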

\begin{proof}
Proof is similar to lemma \ref{ppr2sec3lemma7} of section \ref{ppr2sec3}.
\end{proof}
%%%%%%%%%%%%%%%%%%%%%%%%%%%%%%%
\begin{lemma}
\label{ppr2sec4lemma9} If $d_n(x)$ is upper solution of \eqref{ppr2sec2eq1}-\eqref{ppr2sec2eq2}, then $d_n(x)\geq d_{n+1}(x)$, $\forall x\in [0,1]$, where $d_{n+1}(x)$ is defined by equation \eqref{ppr2sec3eq6}-\eqref{ppr2sec3eq7}.
\end{lemma}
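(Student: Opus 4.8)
The plan is to mirror the argument of Lemma \ref{ppr2sec3lemma6}, but to invoke the Maximum Principle (Proposition \ref{ppr2sec4prop1}) in place of the Anti-Maximum Principle, since the sign convention is reversed in the regime $k<0$. First I would set $u(x)=d_n(x)-d_{n+1}(x)$ and aim to show $u(x)\geq 0$ for all $x\in[0,1]$, which is precisely the claimed inequality $d_n\geq d_{n+1}$. Throughout I take $[A'_1]$ to be in force, so that Proposition \ref{ppr2sec4prop1} is available.

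Next I would compute $-u''-ku$ by subtracting the defining relation \eqref{ppr2sec3eq6} of $d_{n+1}$ from the quantity $-d_n''-kd_n$. Since $-d_{n+1}''-kd_{n+1}=\psi(x,d_n,d_n')-kd_n$, the $kd_n$ contributions cancel and one is left with
\[
-u''(x)-ku(x)=-d_n''(x)-\psi(x,d_n(x),d_n'(x))=:g(x).
\]
Because $d_n$ is an upper solution, $-d_n''\geq\psi(x,d_n,d_n')$, so $g$ is continuous and $g(x)\geq 0$ on $(0,1)$.

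Then I would verify the boundary data. From \eqref{ppr2sec3eq7} together with the upper-solution inequalities of Definition \ref{ppr2sec3de2}, one gets $u'(0)=\lambda_1 d_n(\xi)-\lambda_1 d_{n+1}(\xi)=\lambda_1 u(\xi)$, while $u'(1)=d_n'(1)-d_{n+1}'(1)\geq\lambda_2 d_n(\eta)-\lambda_2 d_{n+1}(\eta)=\lambda_2 u(\eta)$. Setting $c:=u'(1)-\lambda_2 u(\eta)\geq 0$ places $u$ exactly in the framework of the linear BVP \eqref{ppr2sec2eq1}-\eqref{ppr2sec2eq2} with source $g\geq 0$ and constant $c\geq 0$. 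Hence $u$ admits the representation \eqref{ppr2sec4eq3}, and Proposition \ref{ppr2sec4prop1} gives $u(x)\geq 0$, that is $d_n\geq d_{n+1}$.

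The step demanding the most care is the sign bookkeeping rather than any new estimate: in the negative-$k$ regime it is the Maximum Principle (which returns \emph{non-negative} solutions when $g\geq 0$) and not the Anti-Maximum Principle that must be applied, and one has to confirm both that the induced boundary constant $c$ is non-negative and that $[A'_1]$ is satisfied so that $G(x,s)\leq 0$ (Lemma \ref{ppr2sec4lemma2}) and Proposition \ref{ppr2sec4prop1} hold. With those sign checks in place the conclusion is immediate, and the result is the upper-solution analogue of Lemma \ref{ppr2sec4lemma7}.
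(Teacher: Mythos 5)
Your proof is correct and is essentially the paper's own argument: the paper proves this lemma only by analogy with Lemmas \ref{ppr2sec3lemma6} and \ref{ppr2sec4lemma7}, and your reduction of $u=d_n-d_{n+1}$ to the linear problem \eqref{ppr2sec2eq1}--\eqref{ppr2sec2eq2} with $g(x)=-d_n''-\psi(x,d_n,d_n')\geq 0$ and $c=u'(1)-\lambda_2 u(\eta)\geq 0$, followed by Proposition \ref{ppr2sec4prop1} under $[A'_1]$, is exactly the intended computation. Your sign bookkeeping is also the right one: in the $k<0$ regime it is the Maximum Principle (non-negative solution for $g\geq 0$), not the Anti-Maximum Principle, that closes the argument.
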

\begin{lemma}
\label{ppr2sec4lemma10} Suppose $L_1\in \mathbb{R}^+$, $L_2(x):[0,1]\rightarrow \mathbb{R}^+ $ and $L_2(0)=0$ are such that $[A'_1]-[A'_6]$ and conditions of lemma \ref{ppr2sec4lemma5} hold then  the function  $d_n(x)$ given by equation \eqref{ppr2sec3eq6}-\eqref{ppr2sec3eq7} satisfy
\begin{itemize}
\item[(a)] $d_n(x)\geq d_{n+1}(x)$,
\item[(b)] $d_n(x)$ is upper solution of \eqref{ppr2sec2eq1}-\eqref{ppr2sec2eq2}.
\end{itemize}
\end{lemma}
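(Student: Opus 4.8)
The plan is to argue by induction on $n$, mirroring the proof of Lemma \ref{ppr2sec4lemma8} for the lower solutions (and the reverse-ordered upper-solution Lemma \ref{ppr2sec3lemma9}). Assertions (a) and (b) are proved simultaneously: at each stage I first promote $d_n$ to an upper solution and then invoke the descent Lemma \ref{ppr2sec4lemma9} to generate the next link of the decreasing chain.

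For the base case $n=0$, the function $d_0=d$ is an upper solution by $[A'_3]$, so Lemma \ref{ppr2sec4lemma9} yields $d_1\leq d_0$, giving (a), while (b) holds by assumption. For the inductive step I suppose that $d_{n-1}$ is an upper solution and that $d_n\leq d_{n-1}$. To prove (b) for $d_n$, I substitute the scheme \eqref{ppr2sec3eq6}-\eqref{ppr2sec3eq7} into $-d_n''-\psi(x,d_n,d_n')$ to get
\begin{eqnarray*}
-d_n''-\psi(x,d_n,d_n')=\psi(x,d_{n-1},d_{n-1}')-\psi(x,d_n,d_n')+k(d_n-d_{n-1}).
\end{eqnarray*}
Splitting the difference of the source terms and applying $[A'_5]$ in the $u$-variable (legitimate since $d_n\leq d_{n-1}$) together with $[A'_6]$ in the $u'$-variable bounds this below by $(L_1+k)(d_n-d_{n-1})-L_2(x)\,|d_{n-1}'-d_n'|$. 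Putting $u=d_n-d_{n-1}\leq 0$, so that $|d_{n-1}'-d_n'|=(\text{sign}\,u')u'$, the right-hand side is $(L_1+k)u-L_2(x)(\text{sign}\,u')u'$, which Proposition \ref{ppr2sec4prop2} shows to be non-negative. Because the scheme forces the boundary data of $d_n$ to be equalities, the upper-solution boundary inequalities hold trivially, so $d_n$ is an upper solution. Applying Lemma \ref{ppr2sec4lemma9} to this upper solution then gives $d_{n+1}\leq d_n$, which is (a), closing the induction.

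The step I expect to be most delicate is the sign accounting in the appeal to Proposition \ref{ppr2sec4prop2}. For an upper solution the associated source $g=d_{n-1}''+\psi(x,d_{n-1},d_{n-1}')$ is non-positive, the opposite sign to the lower-solution setting for which the proposition is stated, and the one-sided Lipschitz estimate delivers a term $-L_2(x)(\text{sign}\,u')u'$ rather than $+L_2(x)(\text{sign}\,u')u'$. These two reversals cancel: applying the proposition to $-u$, equivalently invoking the Maximum Principle (Proposition \ref{ppr2sec4prop1}) for the solution with non-negative data $-g$, converts the conclusion into $(L_1+k)u-L_2(x)(\text{sign}\,u')u'\geq 0$, which is exactly the inequality required above. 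The right to select the $-$ sign is guaranteed by the two-sided ($\pm$) estimates in Lemma \ref{ppr2sec4lemma5a}, so once those are in hand the argument is purely a matter of bookkeeping.
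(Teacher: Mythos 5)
Your proof is correct and takes essentially the same route as the paper: the paper disposes of this lemma by reference to the induction of Lemma \ref{ppr2sec3lemma7} (base case via the descent lemma, inductive step promoting the iterate to an upper solution using $[A'_5]$, $[A'_6]$ and the differential inequality, then descending again via Lemma \ref{ppr2sec4lemma9}), which is exactly the argument you spell out. Your sign bookkeeping --- noting that for the upper-solution chain the source $g=d_{n-1}''+\psi(x,d_{n-1},d_{n-1}')$ is non-positive and that Proposition \ref{ppr2sec4prop2} must be applied to $-u$ (equivalently, the maximum principle with data $-g\geq 0$ together with the $\pm$ freedom in Lemma \ref{ppr2sec4lemma5a}), yielding $(L_1+k)u-L_2(x)(\mathrm{sign}\,u')u'\geq 0$ --- is precisely the adjustment the paper leaves implicit in its ``similar to'' citation.
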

\begin{proof}
Proof is similar to lemma \ref{ppr2sec3lemma8} of section \ref{ppr2sec3}.
\end{proof}
\begin{prop}
\label{ppr2sec4prop3} Suppose $L_1\in \mathbb{R}^+$, $L_2(x):[0,1]\rightarrow \mathbb{R}^+ $ and $L_2(0)=0$ are such that $[A'_1]-[A'_6]$ and conditions of lemma \ref{ppr2sec4lemma5} hold and
 $$\psi(x,d(x),d'(x))-\psi(x,c(x),c'(x))-k(d(x)-c(x))\geq 0,$$
then $c_n\leq d_n$, $\forall x\in [0,1]$,
where $c_n$ and $d_n$ are given by equation \eqref{ppr2sec3eq4}-\eqref{ppr2sec3eq5} and \eqref{ppr2sec3eq6}-\eqref{ppr2sec3eq7} respectively.
\end{prop}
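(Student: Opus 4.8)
The plan is to mirror the argument of Proposition \ref{ppr2subsec3prop3} from the non-well ordered case, replacing the Anti-Maximum Principle by the Maximum Principle (Proposition \ref{ppr2sec4prop1}) and using the differential inequality of Proposition \ref{ppr2sec4prop2}, so that the conclusion reverses to $c_n \leq d_n$. First I would set
$$g_i(x) = \psi(x,d_i,d'_i) - \psi(x,c_i,c'_i) - k(d_i - c_i), \qquad i \in \mathbb{N},$$
and put $u_i = d_i - c_i$. Subtracting \eqref{ppr2sec3eq4}-\eqref{ppr2sec3eq5} from \eqref{ppr2sec3eq6}-\eqref{ppr2sec3eq7} shows that $u_i$ solves the linear four-point problem
$$-u''_i - k u_i = g_{i-1}(x), \quad u'_i(0) = \lambda_1 u_i(\xi), \quad u'_i(1) = \lambda_2 u_i(\eta),$$
so each $u_i$ admits the representation \eqref{ppr2sec4eq3} with $c=0$, and its sign is controlled by the sign of $g_{i-1}$ through Proposition \ref{ppr2sec4prop1}.

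I would then induct on $n$. In the base case $n=1$ the hypothesis $\psi(x,d,d')-\psi(x,c,c')-k(d-c)\geq 0$ is exactly $g_0 \geq 0$; since $[A'_1]$ holds, Proposition \ref{ppr2sec4prop1} gives $u_1 \geq 0$, i.e. $c_1 \leq d_1$. For the inductive step, assuming $g_{n-2}\geq 0$ and $c_{n-1}\leq d_{n-1}$, the goal is to establish $g_{n-1}\geq 0$. Writing the difference of the $\psi$ terms as a sum of a pure $v$-increment and a pure $w$-increment and invoking the one-sided Lipschitz bound $[A'_5]$ (legitimate because $c_{n-1}\leq d_{n-1}$) together with $[A'_6]$ yields
$$g_{n-1}(x) \geq -(L_1+k)u_{n-1} - L_2(x)|u'_{n-1}| = -\big[(L_1+k)u_{n-1} + L_2(x)(\operatorname{sign}\, u'_{n-1})u'_{n-1}\big].$$
Because $u_{n-1}$ solves the linear BVP above with forcing $g_{n-2}\geq 0$, Proposition \ref{ppr2sec4prop2} makes the bracketed term $\leq 0$, so $g_{n-1}\geq 0$. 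A final application of Proposition \ref{ppr2sec4prop1} to $u_n$, which satisfies $-u''_n - k u_n = g_{n-1}\geq 0$ with the homogeneous four-point conditions, gives $u_n \geq 0$, that is $c_n \leq d_n$, completing the induction.

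The main obstacle I anticipate is the inductive production of $g_{n-1}\geq 0$: the two Lipschitz hypotheses only control $\psi$ in one argument at a time, so they must be combined carefully into the single expression $(L_1+k)u_{n-1}+L_2(x)(\operatorname{sign}\, u'_{n-1})u'_{n-1}$ to which Proposition \ref{ppr2sec4prop2} applies, and this combination relies crucially on having $c_{n-1}\leq d_{n-1}$ (so that $[A'_5]$ is used with the correct orientation) and on $g_{n-2}\geq 0$ (so that Proposition \ref{ppr2sec4prop2} is available for $u_{n-1}$). The remaining sign bookkeeping is routine and follows the template of Proposition \ref{ppr2subsec3prop3}.
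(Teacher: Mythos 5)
Your proposal is correct and is essentially the paper's own argument: the paper proves this proposition simply by declaring it "the same as Proposition \ref{ppr2subsec3prop3}," and your write-up is exactly that induction on $g_i(x)=\psi(x,d_i,d_i')-\psi(x,c_i,c_i')-k(d_i-c_i)$ and $u_i=d_i-c_i$, with the Anti-Maximum Principle replaced by the Maximum Principle (Proposition \ref{ppr2sec4prop1}) and the differential inequality supplied by Proposition \ref{ppr2sec4prop2}. You have in fact made the sign bookkeeping (the orientation of $[A'_5]$ under $c_{n-1}\leq d_{n-1}$, and the combination into $(L_1+k)u_{n-1}+L_2(x)(\operatorname{sign} u'_{n-1})u'_{n-1}$) more explicit than the paper does.
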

\begin{proof}
Proof is same as proposition \ref{ppr2subsec3prop3} of section \ref{ppr2sec3}.
\end{proof}
\subsection{Bound on Derivative of Solution}\label{P2sec5subsec4}
$[A'_7]:$ \textbf{Nagumo condition:} Let $ \mid f(x,v,w)\mid\leq\phi(\mid w \mid);~ \forall (x,v,w) \in E,$ where  $ \phi:\mathbb{R}^{+}\rightarrow \mathbb{R}^{+} $ is continuous which satisfiy,
$$\int_{\gamma}^{\infty}\frac{sds}{\phi(s)} \geq \max_{x\in [0,1]} d(x) - \min_{x\in [0,1]} c(x),$$
such that $ \gamma=2\displaystyle\sup_{x\in [0,1]}\mid d(x)\mid$.

\begin{lemma}
\label{P2sec5subsec4lemma1} Let $ [A'_7] $  be true then there exist $ P>0 $ such that $ \parallel u'\parallel_\infty\leq P$ $\forall x\in [0,1]$, where $ u$ is any solution of inequality
\begin{eqnarray}
\label{ppr2sec4eq4}&&-u''(x)\geq \psi(x,u,u') ,\quad x \in I,\\
\label{ppr2sec4eq5}&&u'(0)= \lambda_1 u(\xi),\quad u'(1)\geq \lambda_1 u(\eta),
\end{eqnarray}
such that $ c(x) \leq u(x)\leq d(x).$
\end{lemma}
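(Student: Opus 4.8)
The plan is to adapt, essentially verbatim, the argument used for the positive-$k$ Nagumo bound in Lemma \ref{ppr2sec3lemma10}, taking care that in the present well-ordered setting the ordering is reversed, so that $c(x)\leq u(x)\leq d(x)$, the relevant oscillation is $\max_{x\in I} d(x)-\min_{x\in I} c(x)$, and $\gamma=2\sup_{x\in I}|d(x)|$. The whole idea is that the growth restriction $[A'_7]$ lets us upgrade a pointwise control of $u'$ at a single interior point into a uniform bound on all of $[0,1]$. First I would partition the discussion according to the monotonicity of $u$ on $(0,1)$: the two cases where $u$ is monotone, and the remaining case where $u'$ changes sign.

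In the monotone increasing case the mean value theorem produces a point $\alpha\in(0,1)$ with $u'(\alpha)=u(1)-u(0)$, whence $|u'(\alpha)|\leq 2\sup_{x\in I}|u(x)|\leq 2\sup_{x\in I}|d(x)|=\gamma$. On the set where $u'>0$, the Nagumo bound $|\psi|\leq\phi(|u'|)$ together with the differential inequality \eqref{ppr2sec4eq4} gives $u''(x)\leq\phi(u'(x))$, so that $\frac{u''(x)u'(x)}{\phi(u'(x))}\leq u'(x)$. Integrating this from $\alpha$ to $x$ and performing the substitution $s=u'(x)$ converts the left-hand side into $\int_{u'(\alpha)}^{u'(x)}\frac{s\,ds}{\phi(s)}$, while the right-hand side is controlled by $\max_{x\in I} d(x)-\min_{x\in I} c(x)$. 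Since $u'(\alpha)\leq\gamma$, this yields
$$\int_{\gamma}^{u'(x)}\frac{s\,ds}{\phi(s)}\leq \max_{x\in I} d(x)-\min_{x\in I} c(x),$$
and comparing with the inequality imposed in $[A'_7]$ forces the existence of a constant $P>0$, depending only on $\phi$, $\gamma$ and the oscillation of the bounding functions, such that $u'(x)\leq P$; because $u'\geq 0$ here, this is exactly $\|u'\|_\infty\leq P$. The monotone decreasing case follows symmetrically by applying the same estimate to $-u'$.

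For the non-monotone case I would localize. I choose an interior zero $x_0$ of $u'$ and work on a maximal subinterval $(x_0,x]$ on which $u'>0$ (respectively $[x,x_0)$ on which $u'<0$). Since $u'(x_0)=0$, integrating $\frac{u''u'}{\phi(u')}\leq u'$ from $x_0$ now starts the $s$-integral at $0$ rather than at $\gamma$, and one obtains $\int_{0}^{u'(x)}\frac{s\,ds}{\phi(s)}\leq \max_{x\in I} d(x)-\min_{x\in I} c(x)\leq\int_{\gamma}^{P}\frac{s\,ds}{\phi(s)}\leq\int_{0}^{P}\frac{s\,ds}{\phi(s)}$ for $P$ chosen large via the integral condition in $[A'_7]$; the monotonicity of $s\mapsto\int_{0}^{s}\frac{t\,dt}{\phi(t)}$ then gives $u'(x)\leq P$ on each such subinterval, and since these subintervals exhaust $[0,1]$ the uniform bound follows.

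I expect the routine but delicate part to be the sign bookkeeping in the chain of inequalities: the differential inequality controls $u''$ only from one side, so one must check in each case that multiplying by $u'$ preserves the correct direction, and the substitution $s=u'(x)$ must be carried out on intervals of fixed sign of $u'$. The essential structural point is that the resulting bound $P$ is \emph{independent} of the particular admissible $u$, since it depends only on $\phi$, $\gamma$ and $\max_{x\in I} d(x)-\min_{x\in I} c(x)$; this is precisely what makes $[A'_7]$ the right hypothesis. The boundary conditions \eqref{ppr2sec4eq5} enter only to guarantee that $u$ is an admissible competitor with $c\leq u\leq d$, and so they do not appear in the estimate itself.
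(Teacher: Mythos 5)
Your proposal is correct and takes essentially the same route as the paper: the paper disposes of this lemma by declaring its proof ``similar to lemma \ref{ppr2sec3lemma10}'', and your argument is precisely that proof --- the three-way case split on the monotonicity of $u$, the mean value theorem bound $|u'(\alpha)|\leq\gamma$, the integration of $\frac{u''u'}{\phi(u')}\leq u'$ with the substitution $s=u'(x)$, and the subintervals anchored at zeros of $u'$ in the non-monotone case --- correctly transposed to the well-ordered setting with $\gamma=2\sup_{x\in[0,1]}|d(x)|$ and oscillation $\max_{x\in[0,1]}d(x)-\min_{x\in[0,1]}c(x)$ as demanded by $[A'_7]$. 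Nothing in your write-up deviates from or falls short of the paper's intended argument.
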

\begin{lemma}
\label{P2sec5subsec4lemma2}  Let $ [A'_7] $  be true then there exist $ P>0 $ such that $ \parallel u'\parallel_\infty\leq P$  $\forall x\in [0,1]$, where $ u$ is any solution of inequality
\begin{eqnarray}
\label{ppr2sec4eq6}&&-u''(x)\leq \psi(x,u,u') ,\quad x \in I,\\
\label{ppr2sec4eq7}&&u'(0)= \lambda_1 u(\xi),\quad u'(1)\leq \lambda_1 u(\eta),
\end{eqnarray}
such that $c(x)\leq u(x)\leq d(x).$
\end{lemma}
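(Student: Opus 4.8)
The plan is to mirror the Nagumo-type argument already carried out in Lemma \ref{ppr2sec3lemma10} of Section \ref{ppr2sec3}, since the present statement is the exact analogue for the well-ordered case with the roles of $c$ and $d$ interchanged. First I would observe that under $[A'_7]$ we have $\abs{\psi(x,u,u')}\leq \phi(\abs{u'})$ on $E$, and that the solution $u$ of the differential inequality \eqref{ppr2sec4eq6}--\eqref{ppr2sec4eq7} is squeezed between the lower and upper solutions, $c(x)\leq u(x)\leq d(x)$. The goal is to produce a bound $P>0$, independent of the particular $u$, controlling $\norm{u'}_\infty$. The mechanism is entirely the integral condition on $\phi$ in $[A'_7]$, which is now stated with $\max d-\min c$ and $\gamma=2\sup\abs{d}$ rather than the $c$-versions, reflecting the reversed ordering.

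The argument would split according to the monotonicity of $u$. \emph{Case 1:} if $u$ is monotone on $(0,1)$, apply the Mean Value Theorem to locate $\alpha\in(0,1)$ with $u'(\alpha)=u(1)-u(0)$, so that
\begin{eqnarray*}
\abs{u'(\alpha)}\leq 2\sup_{x\in[0,1]}\abs{u(x)}\leq 2\sup_{x\in[0,1]}\abs{d(x)}=\gamma.
\end{eqnarray*}
Then, substituting $\abs{\psi}\leq\phi(\abs{u'})$ into \eqref{ppr2sec4eq6} (as $-u''\leq\psi\leq\phi(\abs{u'})$) and integrating the quantity $\dfrac{u''u'}{\phi(\abs{u'})}$ from $\alpha$ to $x$, the substitution $s=u'(x)$ converts the integral into $\int_{u'(\alpha)}^{u'(x)}\frac{s\,ds}{\phi(s)}$, which is bounded above by $\max_{x\in I}d(x)-\min_{x\in I}c(x)$. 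Adding $\int_0^\gamma \frac{s\,ds}{\phi(s)}$ to both sides and invoking the integral condition in $[A'_7]$ forces $\norm{u'}_\infty\leq P$. \emph{Case 2:} if $u$ is not monotone, then I would pick an interior extremum $x_0$ with $u'(x_0)=0$ and work on a one-sided interval $(x_0,x]$ (and symmetrically $[x,x_0)$), integrating from $x_0$ where now the lower limit of the $s$-integral is $0$ rather than $\gamma$; the same comparison with $[A'_7]$ yields the uniform bound $P$.

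Since the structure is identical to the earlier $k>0$ lemma, in the paper this will likely be dispatched by the remark that the proof is similar to Lemma \ref{ppr2sec3lemma10}, and indeed the only bookkeeping change is the direction of the inequality $-u''\leq\psi$ versus $-u''\geq\psi$, which is handled symmetrically. The one place to be careful—the main (though still mild) obstacle—is ensuring the sign in the integrand $\frac{u''u'}{\phi(\abs{u'})}$ is tracked correctly when $u'$ changes sign, so that the chain $s=u'(x)$ and the estimate $\int \frac{s\,ds}{\phi(s)}\leq \max d-\min c$ remains valid; this is why the non-monotone case must be partitioned at the critical points of $u$ and treated on intervals of constant sign of $u'$. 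No new analytic input beyond $[A'_7]$ and the enclosure $c\leq u\leq d$ is required.
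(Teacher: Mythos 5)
Your proposal is correct and follows essentially the same route as the paper, which disposes of this lemma (together with Lemma \ref{P2sec5subsec4lemma1}) by the single remark that the proof is similar to that of Lemma \ref{ppr2sec3lemma10}: Mean Value Theorem to obtain $\abs{u'(\alpha)}\leq\gamma$, integration of $u''u'/\phi(\abs{u'})$ over intervals on which $u'$ has constant sign, and the Nagumo integral condition $[A'_7]$. Your bookkeeping of the only changes --- the reversed differential inequality $-u''\leq\psi$, the bound $\max d-\min c$, and $\gamma=2\sup_{x\in[0,1]}\abs{d(x)}$ --- matches the paper's intended adaptation exactly.
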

The proof of above two lemmas are similar to the proof described in lemma \ref{ppr2sec3lemma10} of section \ref{ppr2sec3.4}.
\begin{theorem}
\label{ppr2sec4the02} Suppose $L_1\in \mathbb{R}^+$, $L_2(x):[0,1]\rightarrow \mathbb{R}^+ $ and $L_2(0)=0$ are such that $[A'_1]-[A'_6]$ and conditions of lemma \ref{ppr2sec4lemma5} hold and $ \forall x\in[0,1]$
$$\psi(x,d(x),d'(x)-\psi(x,c(x),c'(x))-k(d-c)\geq0,~~~ \forall x\in [0,1],$$
then $ (c_n)_n\rightarrow y $ and $ (d_n)_n\rightarrow z $  uniformly in $C^1[0,1]$ such that
 $c\leq z\leq y\leq d$, where  $y$ and $z$ are solutions of \eqref{ppr2sec2eq1}-\eqref{ppr2sec2eq2}.
\end{theorem}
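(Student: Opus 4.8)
The plan is to mirror the argument already carried out in full for Theorem~\ref{ppr2sec3subsec3the1}, since every structural ingredient needed here has been assembled in the preceding lemmas and propositions. First I would record the complete monotone chain. Combining Lemma~\ref{ppr2sec4lemma8}(a) (so that $(c_n)_n$ is nondecreasing), Lemma~\ref{ppr2sec4lemma10}(a) (so that $(d_n)_n$ is nonincreasing), and Proposition~\ref{ppr2sec4prop3} (so that $c_n\leq d_n$ for every $n$, which uses the hypothesis $\psi(x,d,d')-\psi(x,c,c')-k(d-c)\geq0$) yields
\[
c=c_0\leq c_1\leq\cdots\leq c_n\leq\cdots\leq d_n\leq\cdots\leq d_1\leq d_0=d.
\]
Each iterate is thus confined to the order interval $[c,d]$, so both sequences are bounded; being monotone, they converge pointwise by the monotone convergence theorem. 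Setting $y=\lim_{n\to\infty}c_n$ and $z=\lim_{n\to\infty}d_n$ and passing to the limit in $c_n\leq d_n$ and in the extreme members of the chain pins $y$ and $z$ between $c$ and $d$ in the order asserted.

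Next I would upgrade pointwise convergence to uniform convergence in $C^1$. Since every $c_n$ satisfies $c\leq c_n\leq d$ and solves the inequalities of the type appearing in Lemmas~\ref{P2sec5subsec4lemma1}--\ref{P2sec5subsec4lemma2}, the Nagumo condition $[A'_7]$ furnishes a single constant $P>0$ with $\|c_n'\|_\infty\leq P$ independent of $n$. Inserting this derivative bound, the uniform bound on $c_n$, and the continuity of $\psi$ on the compact set $E$ back into the differential equation $-c_{n+1}''-k\,c_{n+1}=\psi(x,c_n,c_n')-k\,c_n$ bounds $c_{n+1}''$ uniformly; hence $(c_n')_n$ is equicontinuous and $(c_n)_n$ is equibounded and equicontinuous in $C^2[0,1]$. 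Arzel\`a--Ascoli then extracts a subsequence converging in $C^2$, and since the pointwise limit $y$ is unique, the whole sequence satisfies $c_n\to y$ and $c_n'\to y'$ uniformly. The identical reasoning applied to $(d_n)_n$ gives $d_n\to z$ and $d_n'\to z'$ uniformly, establishing the $C^1$ convergence claimed.

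Finally I would identify $y$ and $z$ as solutions by passing to the limit in the integral representation. Because the iteration \eqref{ppr2sec3eq4}--\eqref{ppr2sec3eq5} is exactly \eqref{ppr2sec3eq1}--\eqref{ppr2sec3eq2} with vanishing inhomogeneous boundary constant, Lemma~\ref{ppr2sec4lemma3} represents each iterate by
\[
c_{n+1}(x)=-\int_{0}^{1}G(x,s)\big(\psi(s,c_n(s),c_n'(s))-k\,c_n(s)\big)\,ds .
\]
The uniform convergence $c_n\to y$, $c_n'\to y'$ together with the continuity of $\psi$ makes the integrand converge uniformly, so Lebesgue's dominated convergence theorem permits taking $n\to\infty$ and identifies $y$ as a fixed point of this integral operator, i.e. a $C^2$ solution of the nonlinear problem \eqref{ppr2sec1eq1}--\eqref{ppr2sec1eq2}; the same computation gives that $z$ solves it as well. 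The main obstacle is the compactness step: securing the $n$-independent bound on $c_n'$ (and hence on $c_n''$) from the Nagumo condition and then justifying $C^1$ (indeed $C^2$) convergence through Arzel\`a--Ascoli, since once uniform convergence of the derivatives is in hand the limit passage and the identification of $y$ and $z$ as solutions are routine.
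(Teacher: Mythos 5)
Your proof is correct and follows essentially the same route as the paper: the paper gives no separate proof of Theorem~\ref{ppr2sec4the02}, implicitly deferring to its proof of Theorem~\ref{ppr2sec3subsec3the1}, and your argument reproduces exactly that scheme --- the monotone chain from Lemmas~\ref{ppr2sec4lemma8}, \ref{ppr2sec4lemma10} and Proposition~\ref{ppr2sec4prop3}, Nagumo-based derivative bounds feeding Arzel\`a--Ascoli for uniform $C^1$ (indeed $C^2$) convergence, and dominated convergence in the Green's-function representation of Lemma~\ref{ppr2sec4lemma3} to identify the limits as solutions. One remark: the chain you derive forces $c\leq y\leq z\leq d$, so the ordering $c\leq z\leq y\leq d$ in the theorem statement is a typo in the paper (the analogous swap occurs in Theorem~\ref{ppr2sec3subsec3the1}) rather than a defect in your argument.
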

\section{Numerical Illustration}\label{section-6}
In this section we show numerically and graphically that for well order case sequences of upper and lower solutions are uniformly convergent  and converges to the solution. We also give some range for $k<0$ which will validate our results.
\subsection{Example}
Consider four point BVPs,
\begin{eqnarray}
\label{p2sec6eq1}&&-u''(x)=\dfrac{e^x-1}{40}\big(u'^2-u-\frac{\cos x}{4}\big),\\
\label{p2sec6eq2}&&u'(0)=\dfrac{1}{4}u(0.2),\quad u'(1)=\dfrac{1}{9}u(0.3),
\end{eqnarray}
where $ \psi(x,u,u')= \dfrac{e^x-1}{40}\left(u'^2-u-\frac{\cos x}{4}\right)$, $ \xi=0.2,~ \eta=0.3,~\lambda_1=\dfrac{1}{4} $ and $ \lambda_2=\dfrac{1}{9}. $ We consider initial lower and upper solutions $ c(x)=-1.905-\dfrac{x}{2}+\dfrac{x^2}{8},~d(x)=1.9+\dfrac{x}{2} $ respectively, where $ d(x) \geq c(x)$. Since $ \psi(x,u,u') $ is one sided Lipschitz in $ u $ so lipschitz constant is $ L_1=0.042957$. Also $f$ is Lipschitz in $ u' $ therefore we derive from $[A'_5]$, $ L_2(x)=\dfrac{2P(e^x-1)}{40} $, where $ P>0 $ such that  $ \parallel u'\parallel_\infty\leq P, ~~\forall x\in [0,1]$. Here we obtained $ \phi(|s|)=0.042957(|s^2|+2.65)$. By using $[A'_6]$ we obtain $ P=5.868826 $. The Range  $(-\alpha_0, -\beta_0)$ of $ k<0 $ is computed by using condition $[A'_1]$, $[A'_2]$ and  Mathematica.11.3, where $\beta_0=1.447171$ and $-\alpha_0$ is not sharp.
\begin{figure}[H]
   \begin{minipage}{0.48\textwidth}
     \centering
     \includegraphics[width=.7\linewidth]{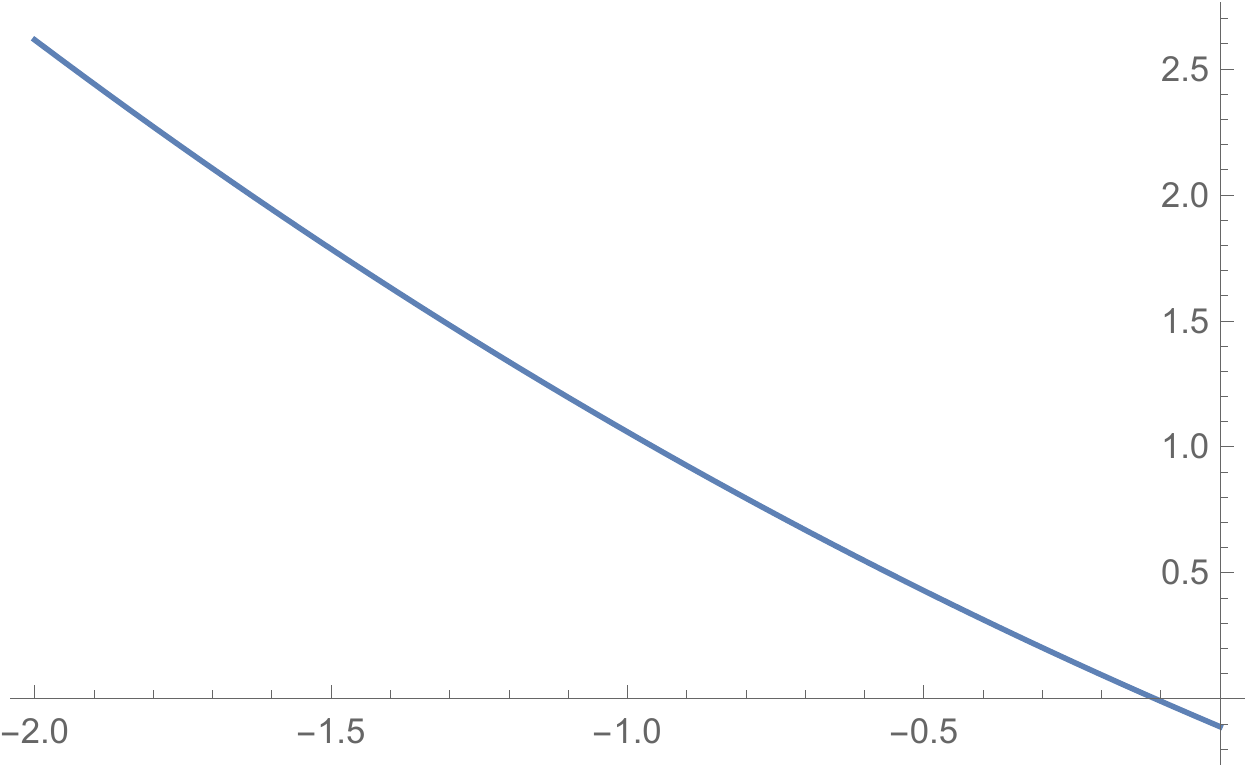}
     \caption{$\sqrt{\mid k\mid}\cosh\sqrt{\mid k\mid}-\lambda_2 \sinh\sqrt{\mid k\mid}$}\label{p2sec6_well_fig1}
   \end{minipage}\hfill
   \begin{minipage}{0.48\textwidth}
     \centering
     \includegraphics[width=.7\linewidth]{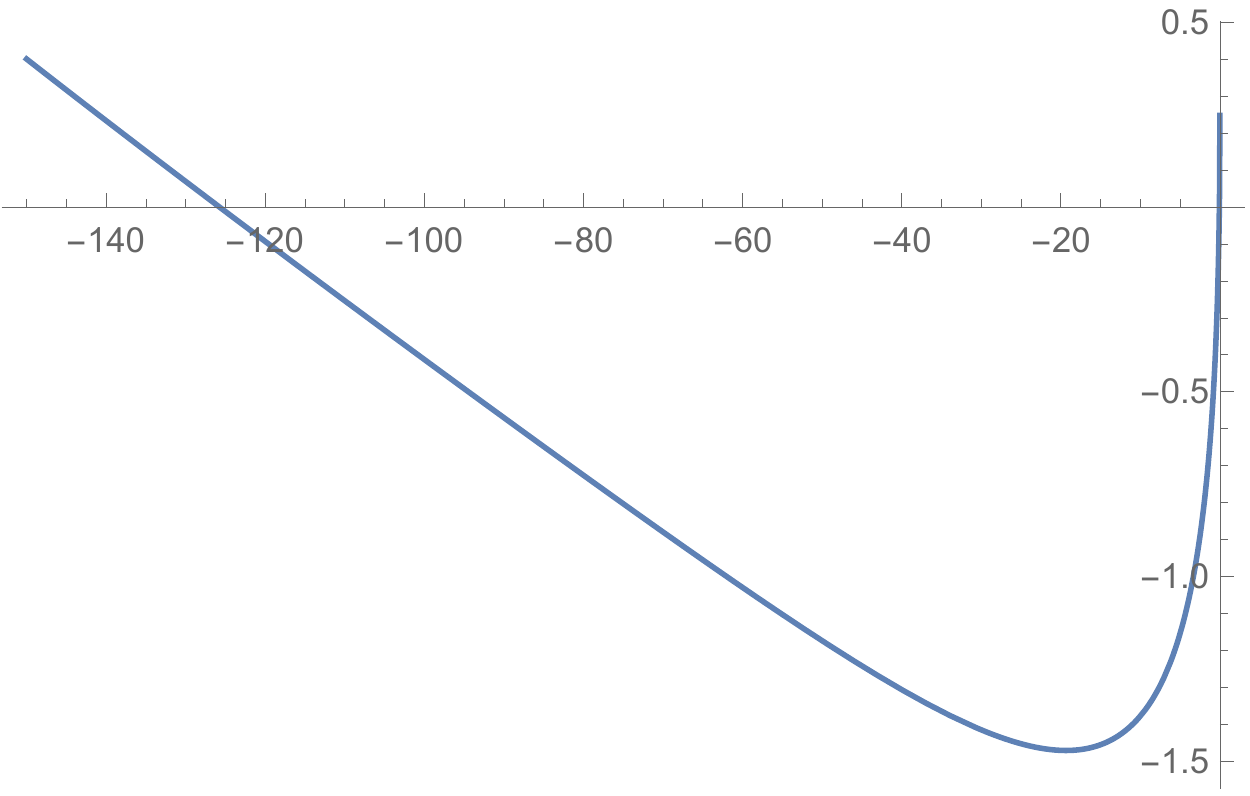}
     \caption{$\sqrt{\mid k\mid}\sinh\sqrt{\mid k\mid}\xi+(\lambda_1 -\sqrt{\mid k\mid})\cosh\sqrt{\mid k\mid}\xi$}\label{p2sec6_well_fig2}
   \end{minipage}
\end{figure}

\begin{figure}[H]
   \begin{minipage}{0.48\textwidth}
     \centering
     \includegraphics[width=.7\linewidth]{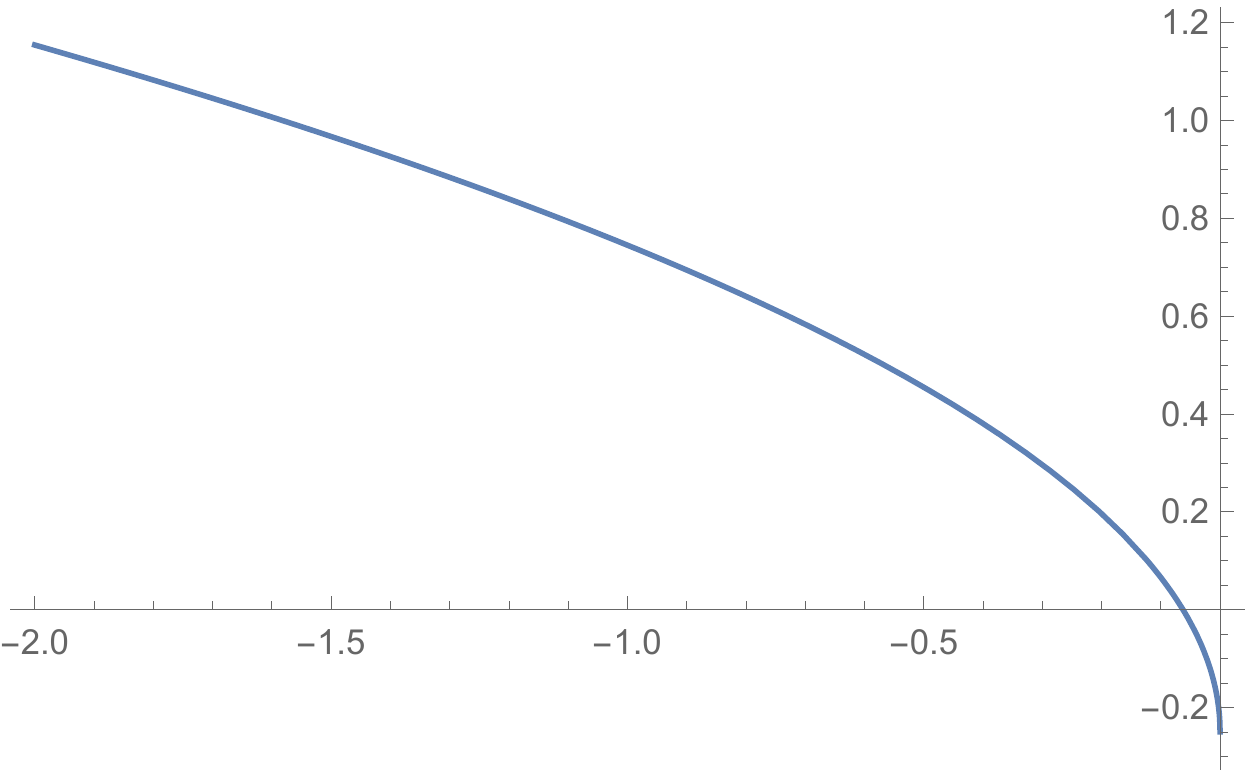}
     \caption{$\sqrt{\mid k\mid}-\lambda_1 \cosh\sqrt{\mid k\mid}\xi$}\label{p2sec6_well_fig3}
   \end{minipage}\hfill
   \begin{minipage}{0.48\textwidth}
     \centering
     \includegraphics[width=.7\linewidth]{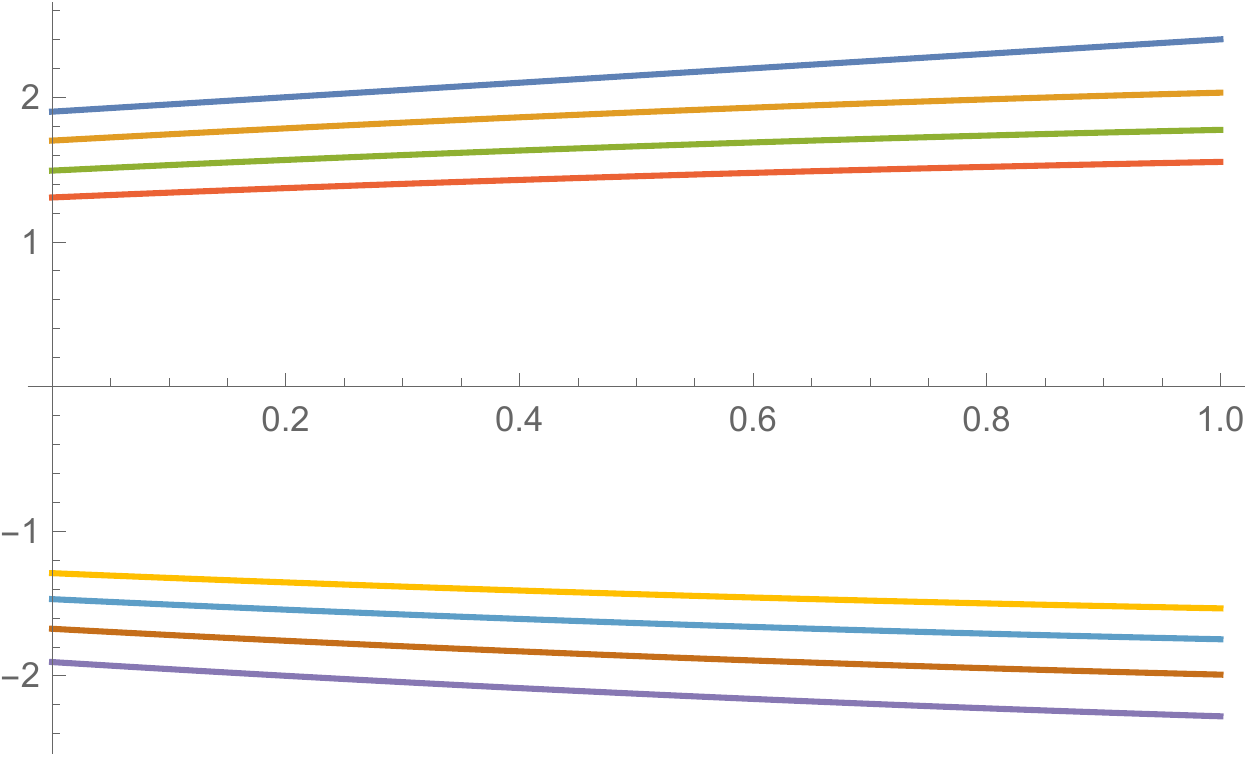}
     \caption{$k=-1.0698$}\label{p2sec6_well_fig4}
   \end{minipage}
\end{figure}

\begin{figure}[H]
   \begin{minipage}{0.48\textwidth}
     \centering
     \includegraphics[width=.7\linewidth]{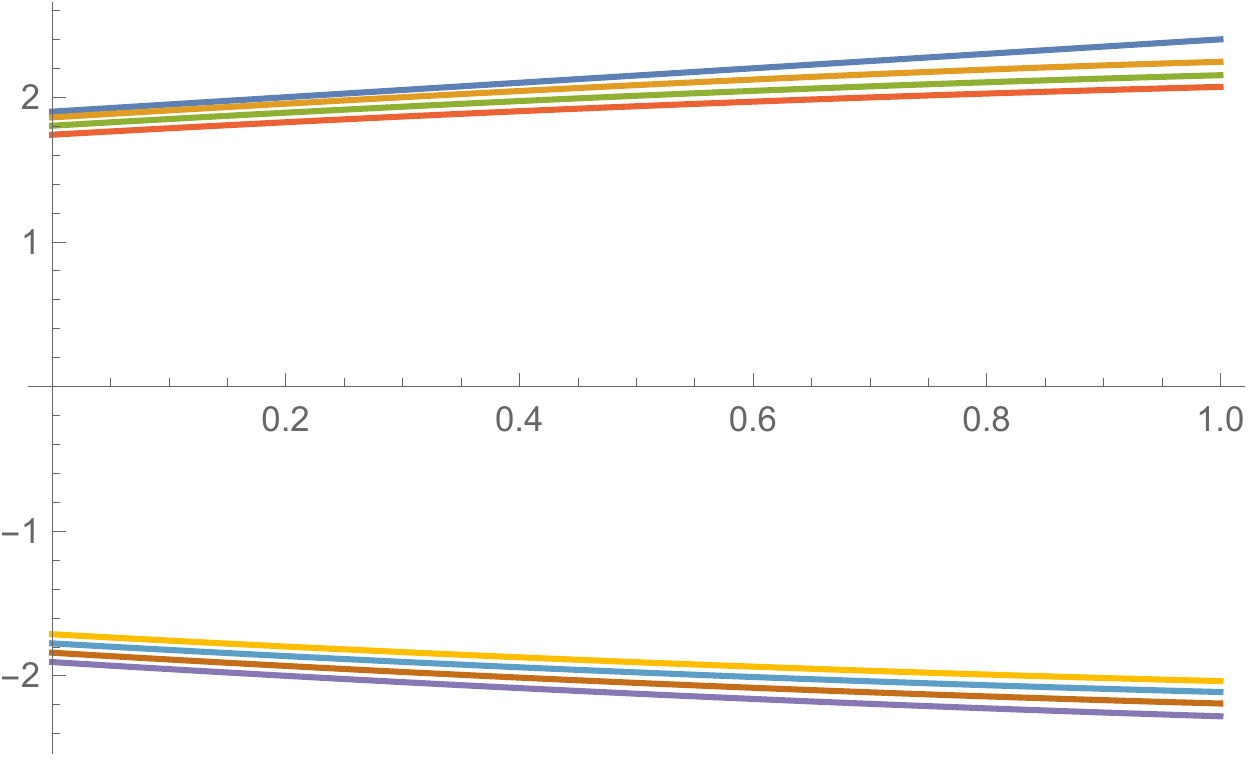}
     \caption{$k=-4$}\label{p2sec6_well_fig3a}
   \end{minipage}\hfill
   \begin{minipage}{0.48\textwidth}
     \centering
     \includegraphics[width=.7\linewidth]{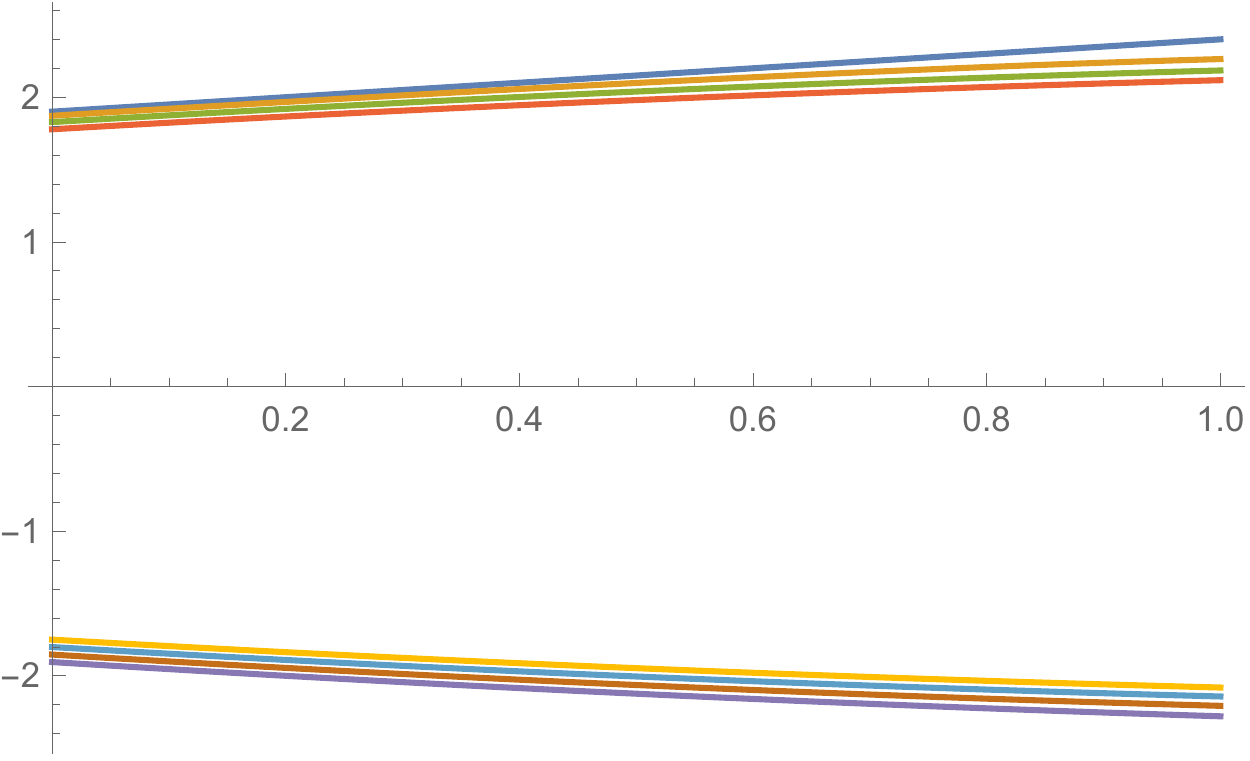}
     \caption{$k=-5$}\label{p2sec6_well_fig4a}
   \end{minipage}
\end{figure}
\textbf{Remark 4:} In figure 11 we observe that for $k=-1.0698$ the sequences of lower and upper solutions are monotone but we have obtained the range for $k$ is $(-\alpha_0, -\beta_0)$, where $\beta_0=1.447171$.
\section{Conclusion}
In this paper we have considered a non linear four point BVPs and scrutinized a technique which is called MI-Technique with upper-lower solutions. We have dealt with both cases reverse and well. We have defined both upper-lower solutions, maximum anti-maximum principle and sign of $G(x,s)$  for both positive and negative case. We observed that to prove proposition \ref{ppr2subsec3prop3} and \ref{ppr2sec4prop3} we need that $\psi(x,u,u')$ should be one Lipschitz in $u$ and Lipschitz in $u'$ with  Lipschitz functions $L_1$ and $L_2(x)$ respectively, where $L_2(x)$ is non-negative function of $x$. We have shown that if we are able to construct initial lower-upper solutions then the sequences of lower-upper solutions gives guarantee of the uniform convergence. For numerical  point of view this technique is easy to handle. We have illustrated two example for $k>0$ and $k<0$ and graphically we have shown that solution converges uniformly. For this we have used Mathematica-11.3. We have considered $k$ as a constant and not equal to zero. We have also obtained some range for both cases in which MI-Technique satisfy all the condition of our problem which we have deduced.
\bibliography{MasterS}
\bibliographystyle{plain}
\end{document}